\newif\ifpdf
\newif\iffinal
\else\usepackage[notref,notcite]{showkeys}\fi
\DeclareFontFamily{OT1}{eusb}{} \DeclareFontShape{OT1}{eusb}{m}{n} {<5> <6> <7> <8> <9> <10> <11> <12> <14.4> eusb10}{}
\DeclareMathAlphabet{\eusb}{OT1}{eusb}{m}{n}
\DeclareFontFamily{OT1}{eusm}{} \DeclareFontShape{OT1}{eusm}{m}{n} {<5> <6> <7> <8> <9> <10> <11> <12> <14.4> eusm10}{}
\DeclareMathAlphabet{\eusm}{OT1}{eusm}{m}{n}
\DeclareFontFamily{OT1}{eufm}{} \DeclareFontShape{OT1}{eufm}{m}{n} {<5> <6> <7> <8> <9> <10> <11> <12> <14.4> eufm10}{}
\DeclareMathAlphabet{\mathfrak}{OT1}{eufm}{m}{n}
\DeclareFontFamily{OT1}{fraktura}{}
\DeclareFontShape{OT1}{fraktura}{m}{n} {<5> <6> <7> <8> <9> <10> <11> <12> <13> <14.4> [1.1] eufm10}{}
\DeclareMathAlphabet{\fraktura}{OT1}{fraktura}{m}{n}
\DeclareFontFamily{OT1}{cmfi}{} \DeclareFontShape{OT1}{cmfi}{m}{n} {<5> <6> <7> <8> <9> <10> <11> <12> <13> <14.4> [0.9] cmfi10}{}
\DeclareMathAlphabet{\cmfi}{OT1}{cmfi}{b}{n}
\DeclareFontFamily{OT1}{cmss}{} \DeclareFontShape{OT1}{cmss}{m}{n} {<5> <6> <7> <8> <9> <10> <11> <12> <13> <14.4> cmss10}{}
\DeclareMathAlphabet{\cmss}{OT1}{cmss}{m}{n}
\newtheoremstyle{thm}{1.5ex}{1.5ex}{\itshape\rmfamily}{} {\bfseries\rmfamily}{}{2ex}{}
\newtheoremstyle{def}{1.5ex}{1.5ex}{\slshape\rmfamily}{} {\bfseries\rmfamily}{}{2ex}{}
\newtheoremstyle{rem}{1.3ex}{1.3ex}{\rmfamily}{} {\itshape}
{} {1.5ex}{}
\theoremstyle{thm}
\newtheorem{theorem}{Theorem}[section]
\newtheorem{lemma}[theorem]{Lemma}
\newtheorem{proposition}[theorem]{Proposition}
\newtheorem*{Main Theorem}{Main Theorem.}
\newtheorem{corollary}[theorem]{Corollary}
\newtheorem*{special theorem}{Lindeberg-Feller Theorem for Martingales}
\theoremstyle{def}
\theoremstyle{rem}
\newtheorem{remark}{{\itshape Remark}}[]
\numberwithin{equation}{section}
\renewcommand{\section}{\secdef\sct\sect}
\newcommand{\sct}[2][default]{%
\refstepcounter{section}
\addcontentsline{toc}{section}{{\tocsection {}{\thesection}{\!\!\!\!#1\dotfill}}{}}
\vspace{0.7cm}
\centerline{\scshape\thesection.\ #1} \nopagebreak \vspace{0.2cm}}
\newcommand{\sect}[1]{%
\vspace{0.4cm} \centerline{\large\scshape\rmfamily #1}
\vspace{0.2cm}}
\renewcommand{\subsection}{\secdef\subsct\sbsect}
\newcommand{\subsct}[2][default]{\refstepcounter{subsection}
\addcontentsline{toc}{subsection}
{{\tocsection{\!\!}{\hspace{1.2em}\thesubsection}{\!\!\!\!#1\dotfill}}{}}
\nopagebreak\vspace{0.45\baselineskip} {\flushleft\bf
\thesubsection~\bf #1.~}
\\*[3mm]\noindent
\nopagebreak}
\newcommand{\sbsect}[1]{\vspace{0.1cm}\noindent
\textbf{#1.~}\vspace{0.1cm}}
\renewcommand{\subsubsection}{%
\secdef \subsubsect\sbsbsect}
\newcommand{\subsubsect}[2][default]{%
\refstepcounter{subsubsection}
\addcontentsline{toc}{subsubsection}{{\tocsection{\!\!}
{\hspace{3.05em}\thesubsubsection}{\!\!\!\!#1\dotfill}}{}}
\nopagebreak
\vspace{0.15\baselineskip} \nopagebreak {\flushleft\rmfamily
\itshape\thesubsubsection
\ \rmfamily #1\/.}\ }
\newcommand{\sbsbsect}[1]{\vspace{0.1cm}\noindent
\rmfamily \itshape
\arabic{section}.\arabic{subsection}.\arabic{subsubsection} \
\sffamily #1\/.\ }
\renewcommand{\caption}[1]{%
\vglue0.5cm
\refstepcounter{figure}
\begin{minipage}{0.9\textwidth}\small {\sc Figure~\thefigure. }#1\end{minipage}}
\newcommand{\CC}{\mathcal C}
\newcommand{\DD}{\mathcal D}
\newcommand{\EE}{\mathcal E}
\newcommand{\FF}{\mathcal F}
\newcommand{\GG}{\mathcal G}
\newcommand{\HH}{\mathcal H}
\newcommand{\LL}{\mathcal L}
\newcommand{\MM}{\mathcal M}
\newcommand{\NN}{\mathcal N}
\newcommand{\RR}{\mathcal R}
\newcommand{\VV}{\mathcal V}
\newcommand{\WW}{\mathcal W}
\newcommand{\XX}{\mathcal X}
\newcommand{\E}{\mathbb E}
\newcommand{\N}{\mathbb N}
\def\myffrac#1#2 in #3{\raise 2.6pt\hbox{$#3 #1$}\mkern-1.5mu\raise 0.8pt\hbox{$#3/$}\mkern-1.1mu\lower 1.5pt\hbox{$#3 #2$}}
\newcommand{\Cov}{\text{\rm Cov}}
\newtheorem{thm}{Theorem}[section]
\newtheorem{lmm}[thm]{Lemma}
\newcommand{\ee}{\mathbb{E}}
\newcommand{\mf}{\mathcal{F}}
\newcommand{\ra}{\rightarrow}
\newcommand{\rr}{\mathbb{R}}
\title[CLTs for the SK model]{Central Limit Theorems for the Energy Density in the Sherrington-Kirkpatrick Model}
\author[S.~Chatterjee and N.~Crawford]
{Sourav Chatterjee and Nicholas Crawford}
\thanks{{\tt
    email:sourav@stat.berkeley.edu}, Supported by a Sloan Research Fellowship and NSF grant DMS 0707054}  
\thanks{{\tt
    email:crawford@stat.berkeley.edu}, Supported in part by DOD ONR grant
  N0014-07-1-05-06}
\begin{document}
\thanks{\hglue-4.5mm\fontsize{9.6}{9.6}\selectfont\copyright\,2008 by S.~Chatterjee and N.~Crawford. Reproduction, by any means, of the entire
article for non-commercial purposes is permitted without charge.\vspace{2mm}}
\maketitle

\vspace{-5mm}
\centerline{\textit{Department of Statistics, University of California at Berkeley}}

\vspace{-2mm}

\begin{abstract}
In this paper we consider central limit theorems for various macroscopic observables in the high temperature region of the Sherrington-Kirkpatrick spin glass model.  With a particular focus on obtaining a quenched central limit theorem for the energy density of the system with non-zero external field, we show how to combine the mean field cavity method with Stein's method in the quenched regime.  The result for the energy density extends the corresponding result of Comets and Neveu in the case of zero external field.
\end{abstract}

\section{Introduction}
The study of mean field disordered systems has lately seen much interest from the theoretical probability and (mathematical) statistical physics communities.  In this paper we reinvestigate the general problem of proving central limit theorems for macroscopic observables in such systems.  For a reasonable family of models, the book \cite{Talagrand-book} shows how to obtain such theorems in a direct way, by computing all limiting moments of the random variables in question (in particular we refer to Sections 2.5, 2.6, 2.7, 3.5, 3.6 and 5.10).  Vaguely, the method rests on having \textit{a priori} control of some fundamental order parameter (in the high temperature regime).

For the specific model we consider here, the Sherrington-Kirkpatrick model \cite{SK}, among the notable rigorous contributions we mention \cite{ASS, AA, CM, Guerra-Ton-1, Guerra-Ton-2, Guerra, Guerra-2, Talagrand-paper-1}, culminating in the verification of the Parisi formula \cite{Talagrand-paper}.  The SK model is defined via the Gibbs measure on spin configurations $\sigma \in \Sigma_N := \{-1, 1\}^N$ with mean field interaction given by the Hamiltonian
\begin{equation*}
H_N(\sigma) = \sum_{1 \leq i<j \leq N} \frac{1}{\sqrt N} g_{i,j} \sigma_i \sigma_j + h \sum_{i=1}^N \sigma_i.
\end{equation*}
The couplings $g_{i,j}$ are assumed to be independent Gaussian variables with mean $0$ and variance $1$; $h \in \mathbb R$ denotes the strength of the external field.  In other words, each spin configuration $\sigma \in \Sigma_N$ is chosen with probability
\begin{equation*}
P(\sigma) \propto
e^{\beta H_N(\sigma)}.
\end{equation*}
The parameter $\beta$ denotes the inverse temperature and note that we have omitted the minus sign from the exponent for convenience.

In the case of the Sherrington-Kirkpatrick model, the role of the order parameter mentioned in the first paragraph is played by the overlap $R_{1,2}$:
Let $\sigma^1, \sigma^2 \in \{-1,1\}^N$ denote a pair of spin configurations.  The overlap between $\sigma^1, \sigma^2$ is given by
\begin{equation*}
R(\sigma^1, \sigma^2) = R_{1,2}: = \frac{1}{N}\sum_{i=1}^N \sigma_i^1 \sigma_i^2.
\end{equation*}

Control of $R_{1,2}$ is obtained by relating the full system of $N$ spins to a system in which one of the particles has been decoupled from the other $N-1$ in a `smart' way via the cavity method.
Among other consequences, the cavity method allows the explicit computation of all moments for $R_{1,2}$ when properly centered and scaled, and thus proves quenched and "quenched average" CLTs for $R_{1,2}$ via the method of moments  (for a definition of quenched and quenched average, see Section \ref{S:notes}).

Our goal in the present work is two fold:  First, from a specific viewpoint, we explore the limiting behavior of
\begin{equation*}
H_N=H_N(\sigma) = \sum_{1 \leq i<j \leq N} \frac{1}{\sqrt N} g_{i,j} \sigma_i \sigma_j + h \sum_{i=1}^N \sigma_i
\end{equation*}
under the quenched and "quenched averaged" Gibbs distributions.  
In the $h=0$ case, this problem was studied previously by Comets and Neveu in the beautiful work \cite{CM} and later was re-derived using Stein's method in \cite{Chatterjee}.  
Let us recall for the reader the relevant result:
\begin{proposition}[Proposition 5.2 of \cite{CM}]
\label{P:CN}
Let $h=0$ and $\beta< 1$.  Then as $N \rightarrow \infty$, the law of 
\begin{equation}
\HH_N:= (N- 1)^{-\frac{1}{2}}[ H_N(\sigma) - (N - 1)\beta/2] 
\end{equation}
under the quenched Gibbs probability distribution weakly converges in probability to   
the law of a centered Gaussian with variance $\frac{\beta^2}{2}$. More precisely, we have
\begin{equation}
\lim_{N \rightarrow \infty} \langle \exp[\mu \HH_N] \rangle= \exp(\beta^2 \mu^2/4)
\end{equation}
for every real $\mu$, where the convergence occurs in probability with respect to the i.i.d. Gaussian couplings. 
\end{proposition}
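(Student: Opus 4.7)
The plan is to prove convergence in probability of the quenched Laplace transform $\langle\exp(\mu\HH_N)\rangle \to \exp(\beta^2\mu^2/4)$ by first reducing to an annealed statement via disorder concentration, and then identifying the annealed limit through Gaussian integration by parts combined with a high-temperature overlap estimate.

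\textbf{Step 1: disorder concentration.} Setting $\varphi_N(\mu):=\ee\langle\exp(\mu\HH_N)\rangle$, I would check that $(g_{ij})\mapsto\langle\exp(\mu\HH_N)\rangle$ is smooth with partial derivative of size $O(|\mu|/\sqrt N)$ in each coordinate, the $1/\sqrt N$ coming from the coupling normalization together with the uniform bound $|\sigma_i\sigma_j|\leq 1$. The Gaussian Poincar\'e inequality then yields $\var_g\bigl(\langle\exp(\mu\HH_N)\rangle\bigr)=O(\mu^2/N)$, so the quenched claim reduces to $\varphi_N(\mu)\to\exp(\beta^2\mu^2/4)$.

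\textbf{Step 2: annealed ODE via IBP.} To identify $\lim\varphi_N$, I would differentiate in $\mu$ and expand,
$$\varphi_N'(\mu) \;=\; \frac{1}{\sqrt{N(N-1)}}\sum_{i<j}\ee\bigl[g_{ij}\,\langle \sigma_i\sigma_j\,e^{\mu\HH_N}\rangle\bigr] \;-\; \frac{\beta\sqrt{N-1}}{2}\,\varphi_N(\mu),$$
and apply Gaussian integration by parts to each $g_{ij}$. The IBP produces contributions from the Gibbs weight $e^{\beta H_N}$, the exponential factor $e^{\mu\HH_N}$, and the partition-function denominator; assembling them via the replica identity
$$\sum_{i<j}\sigma_i^1\sigma_j^1\sigma_i^2\sigma_j^2 \;=\; \tfrac12\bigl(N^2 R_{1,2}^2 - N\bigr)$$
shows that the leading terms combine (after cancellation of the deterministic centering) into an ODE of the form $\varphi_N'(\mu)=(\beta^2/2)\,\mu\,\varphi_N(\mu) + o(1)$, up to an error proportional to $(N/\sqrt{N-1})\,\ee\langle R_{1,2}^2\,e^{\mu\HH_N^{(1)}}\rangle$. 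In the subcritical regime $\beta<1$ the standard cavity-method estimate $\ee\langle R_{1,2}^2\rangle = O(1/N)$ kills this error, and integrating the resulting ODE with $\varphi_N(0)=1$ gives the desired Gaussian limit $\exp(\beta^2\mu^2/4)$.

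\textbf{Main obstacle.} The delicate point is to control the overlap-dependent remainder uniformly in $\mu$ on a compact interval. Cauchy--Schwarz reduces the bound on $(N/\sqrt{N-1})\,\ee\langle R_{1,2}^2\,e^{\mu\HH_N^{(1)}}\rangle$ to the product of the higher cavity estimate $\ee\langle R_{1,2}^4\rangle = O(N^{-2})$ (available in the high-temperature SK model) and a uniform exponential-moment bound $\ee\langle e^{2\mu\HH_N}\rangle = O(1)$. The latter can be obtained by recognizing this quantity as a tilted partition-function ratio and invoking the extensive free-energy expansion $\ee\log Z_N(\beta) = N\log 2 + \beta^2(N-1)/4 + o(N)$ for $\beta<1$, together with the subcriticality of $\beta + 2\mu/\sqrt{N-1}$ in the $N\to\infty$ limit. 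Closing these estimates then produces the quenched Gaussian limit.
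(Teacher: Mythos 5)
First, note that the paper itself does not prove Proposition~\ref{P:CN}: it is quoted verbatim from Comets--Neveu \cite{CM} and used only as motivation. The closest thing to a proof in the present paper is the machinery culminating in Theorem~\ref{T:Sum}, which proceeds via a multivariate quenched-average Stein identity (Theorem~\ref{T:Main}), an $L^2$ quenched Stein bound via replicas (Corollary~\ref{C:Quenched}), and the abstract Gronwall-type Lemma~\ref{L:Sour}. Your Step~2 (Gaussian integration by parts, the replica identity for $\sum_{i<j}\sigma_i^1\sigma_j^1\sigma_i^2\sigma_j^2$, the overlap estimate $\nu(R_{1,2}^2)=O(1/N)$) is genuinely in the same spirit as what the paper does to identify the \emph{annealed} limit, so that part of your plan is reasonable.

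The real problem is Step~1, and it is not a small one. Your claim that $g_{ij}\mapsto\langle e^{\mu\HH_N}\rangle$ has partial derivative of size $O(|\mu|/\sqrt N)$ in each coordinate only accounts for the $g_{ij}$-dependence through $\HH_N$ inside the exponential; it ignores that the Gibbs measure $\langle\cdot\rangle$ itself depends on $g_{ij}$ through the weight $e^{\beta H_N}$ and the partition function. The correct derivative is
\begin{equation*}
\partial_{g_{ij}}\langle e^{\mu\HH_N}\rangle
= \frac{\mu}{\sqrt{N(N-1)}}\,\langle\sigma_i\sigma_j\,e^{\mu\HH_N}\rangle
\;+\;\frac{\beta}{\sqrt N}\Bigl[\langle\sigma_i\sigma_j\,e^{\mu\HH_N}\rangle-\langle\sigma_i\sigma_j\rangle\langle e^{\mu\HH_N}\rangle\Bigr],
\end{equation*}
whose second term is of size $O(\beta/\sqrt N)$, not $O(|\mu|/\sqrt N)$. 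Moreover, even with the corrected pointwise bound $O(1/\sqrt N)$ per coordinate, summing $|\partial_{g_{ij}}\cdot|^2$ over the ${N\choose 2}\sim N^2/2$ coordinates gives a gradient-square of order $N$, so Poincar\'e yields $\var_g\bigl(\langle e^{\mu\HH_N}\rangle\bigr)=O(N)$, which is useless. To rescue the Poincar\'e argument one must exhibit cancellation inside the gradient sum: after expanding with replicas one is led to objects like $N\,\nu\bigl(R_{1,2}^2\,(X_1-X_3)(X_2-X_4)\bigr)$ and, for the $\beta$-term, to a truncated quantity $\nu\bigl((R_{1,2}^2-2R_{1,3}^2+R_{3,4}^2)\,X_1X_2\bigr)$, whose smallness is precisely the kind of delicate high-temperature estimate that the cavity/Stein machinery of the rest of the paper (or the stochastic-calculus martingale argument of Comets--Neveu) is built to handle. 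In other words, the self-averaging of the quenched Laplace transform is not a cheap concentration consequence---it is essentially equivalent to the quenched CLT itself. Your Step~1 trivializes the hard part of the theorem, and as written the variance bound simply does not close.
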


In a broader context, the present work may be seen as a test case for a general approach which strengthens, unifies and extends the CLTs from \cite{Talagrand-book}.  In this respect, we address two issues:  first, the 'method of moments' only provides distributional convergence in the weak sense (and no rate of convergence in any associated metric).  Second, an inherent feature of this approach is that a CLT for any \textit{other} macroscopic observable requires the computation of joint  moments with overlaps, creating extra work each time we consider some new observable. In particular, we point out that the condition for having central limit theorems for a variety of observables rests on the invertibility of a single matrix.  It seems reasonable to believe our method extends to other systems, however below we limit considerations to the SK model.

It is worthwhile to compare Proposition \ref{P:CN} to our main result, Theorem \ref{T:Sum} below.  First, our convergence result for the \textit{quenched average} law of $H_N$ is in the stronger Wasserstein distance as opposed to weak convergence (which is implicit in the above result).  Also, as one can glean by a simple comparison of the general formulas for the quenched average and quenched variances $\sigma_A^2$ and $\sigma_Q^2$ respectively, the $h \neq 0$ case involves considerably more intricate computations.  Note further that in general $\sigma_A \neq \sigma_Q$, which is however the case if $h =0$.  Finally Proposition \ref{P:CN} implies that if $h=0$, then up to terms of the form $o(N^{\frac 12})$, $\HH_N$ is  centered under the \textit{quenched} Gibbs state.  The analogous statement for $h \neq 0$ is not true.

As a general rule, when $h \neq 0$ producing results for the SK model is harder than when $h=0$ (attention was brought to this fact by Talagrand in \cite{Talagrand-paper-1}).
From a structural point of view, the difficulties associated with the case $h \neq 0$ may be summarized by the fact that under the quenched Gibbs state, the pair of overlaps $R_{1,2}$, $R_{2,3}$ are uncorrelated when $h=0$ whereas this is not the case for $h \neq 0$ (even after subtracting off their common mean).

The remainder of this paper is organized as follows.  In the next subsection we introduce much of the notation and mention the interpolation we shall use explicitly.  In addition, we give a brief summary of Stein's Method, which is the main tool we use besides the cavity method.  The reader wishing a more detailed introduction should consult \cite{BC}.  Section 1.3 gives a rundown of the explicit results appearing in the paper.  In Section 2 we review some technical facts from \cite{Talagrand-book} and extend them to give our main estimate, Corollary \ref{C:overlap}.  In Section 3 we give a self contained proof of a quenched average CLT for the internal energy (i.e. the quadratic portion of $H_N$).  Sections 4 and 5 provide the proof of our main result.  Finally, Section \ref{S:Proofs} contains a number of the calculations used in the rest of the paper.

\subsection{Notations}
\label{S:notes}
The consideration of quenched Gibbs states below provides a number of complications.  First, measuring the size of a quenched average is most conveniently addressed through the concept of \textit{replicas}:   Let us fix a realization of $\{g_{i, j}\}$.  For each $\sigma \in \Sigma_N^n$
\begin{equation*}
H_N^n(\sigma)= \sum_{r=1}^n H_N(\sigma^r), \quad \text{ with } \sigma^r \in \Sigma_N \: \forall \: 1 \leq r\leq n.
\end{equation*}
The quenched Gibbs state corresponding $H_N^n$ is denoted by $\langle \cdot \rangle$.  We denote the $n$-replica quenched average Gibbs state by $\nu\left(\cdot\right)=\E\left[\langle \cdot \rangle \right]$.  There should be no confusion here since both $\langle\cdot\rangle$ and $\nu$ define consistent families as $n$ varies.

Let $q_2$ denote the solution to the equation
\begin{equation*}
\label{Eq:MF}
q_2= \mathbb E\left[ \tanh^2 \left(\beta \sqrt{q_2} z +h \right)\right]
\end{equation*}
where $z$ is a standard Gaussian random variable.
A result due to Guerra \cite{Guerra-2} and Lata{\l}a \cite{Latala} shows that there is a unique solution for $q_2$ whenever $h>0$.  We shall use the notation
\begin{equation}
\label{Eq:qp}
q_p = \mathbb E\left[ \tanh^p \left(\beta \sqrt{q_2} z +h \right)\right].
\end{equation}

A fundamental result, due to Frohlich and Zegarlinski \cite{FZ} without rates of convergence  and
Talagrand \cite{Talagrand-paper-1} with rates, is that for each $h \in \mathbb R$ and at high enough temperature,
\begin{equation*}
\label{Eq:High-Temp-1}
\E \left[\left\langle \left(R_{1,2} - q_2\right)^2 \right\rangle\right]  \leq \frac{C}{N}
\end{equation*}
for some constant $C>0$ as $N$, the number of spins in the system, tends to $\infty$.  In fact, we shall assume that $(\beta, h)$ satisfies
\begin{equation}
\label{Eq:High-Temp}
\E \left[\left\langle \left(R_{1,2} - q_2\right)^6 \right\rangle\right]  \leq \frac{C}{N^3}
\end{equation}
We remark that according to \cite{Talagrand-book} Section 2.5, there exists a $\beta_0 > 0$ independent of $N, h$ so that \eqref{Eq:High-Temp} holds for all $0 \leq \beta \leq \beta_0$ (actually much more was proved: If $\beta \leq \beta_0$, the random variable $N \left(R_{1,2}-q_2\right)^2$ has finite exponential moments in a neighborhood of $0$).

Our study is facilitated by viewing $H_N$ as composed of two terms
\begin{equation*}
H_N= E_N + N h M_N
\end{equation*}
where
\begin{align*}
E_{N}(\sigma)= & \sum_{1 \leq i<j \leq N} \frac{1}{\sqrt N} g_{i,j} \sigma_i \sigma_j\\
M_N = & \frac{1}{N} \sum_{j=1}^N \sigma_j
\end{align*}
denote the internal energy and magnetization of the system respectively.  Since we are interested in the fluctuations of these variables, it is convenient to define the normalized quantities
\begin{align*}
\EE_N &=\frac{1}{\sqrt{N}} E_N - \frac{\beta \sqrt{N}}{2} \left(1-q_2^2\right),\\
\MM_N &= \sqrt{N}\left[M_N- q_1\right],\\
\RR_{1,2} &=\sqrt{N}\left[R_{1,2}- q_2\right],\\
\HH_N&=\EE_N+ \MM_N.
\end{align*}

Next let us review the cavity interpolation.  Given the $n$-replica quenched average Gibbs measure $\nu$, we define the measure $\nu_t$ as follows.  Let $\{z_r\}_{r=1}^n$ denote a sequence of standard Gaussian variables independent of the coupling constants $\{g_{i,j}\}_{1 \leq i < j \leq N}$.  Isolating the last spin of each replica with the notation $\varepsilon^r$, let $\langle \cdot \rangle_t$ denote the quenched Gibbs state with Hamiltonian defined by
\begin{equation*}
H_{N, t}^n = \sqrt{t} \sum_{1 \leq r \leq n} \: \sum_{1 \leq i < j \leq N-1}  \: \frac{1}{\sqrt{N}}g_{i,j} \sigma^{r}_i \sigma^{r}_j + \sqrt{1-t}\sqrt{q_2} z_r \varepsilon^{r}.
\end{equation*}
Then we let
\begin{equation}
\label{Eq:nu-t}
\nu_t(f):= \mathbb E \left[\langle f \rangle_t\right].
\end{equation}
In particular $\nu_0$ decouples the replica spins corresponding to the last site from the remainder of the spin system.  We use the notation $R_{k,k'}^-$ to denote $R_{k, k'}- \varepsilon^k \varepsilon^{k'}/N$ and extend this notation in an analogus manner to macroscopic variables such as $M_N$, $\MM_N$, $E_N$, $H_N$ etc.  

An object of importance below are the $\textit{local fields}$: for an index $r \in [n]$, define
\begin{equation*}
\ell^r_N = \frac{1}{\sqrt N} \sum_{i< N} g_{i, N} \sigma^r_i. 
\end{equation*}

As will become clear below, for the purposes of a quenched CLT, we will need to give an extension on the usual bounds for the Stein characterizing equation of a random vector.
Let  $\| \vec x \|_2$ denote the Euclidean length of $\vec x \in \mathbb R^{2n}$ and let $G(\vec x)$ be a polynomial in the components of $\vec x$ with total degree $d$.
Obviously, there exists a constant $C_G>0$ so that
\begin{equation*}
\left|\frac{\partial}{\partial x_{i_1}} \cdots \frac{\partial}{\partial x_{i_m}} G(x)\right| \leq C_G \left(1+\|x\|_2\right)^d
\end{equation*}
for each $m$-tuple $(i_1, \dotsc, i_m), \in \N^m$, the bound holding uniformly in $m$.  Let $C^1_b(\mathbb R^d)$ denote the set of functions $f: \mathbb R^d \rightarrow \mathbb R$ which are bounded and have one continuous derivative.
Let $\FF=\{ f \in C^1_b(\mathbb R): f' \text{ is globally Lipschitz}\}$ and define a norm on $\FF$ by $\|f\|_{\FF} = \|f\|_{\infty}+ \|f'\|_{\infty} + Lip(f')$.  More generally, let
\begin{equation}
\label{Eq:F}
\FF_{2n}=\left\{ F \in C^1_b(\mathbb R^{2n})\;:\; \nabla F \text{ is globally Lipschitz}\right\}.
\end{equation}
Our case study will revolve around the vector 
\begin{equation}
\label{Eq:XX}
\XX^n_N=(\EE^1_N,\dotsc \EE^n_N,  \MM^1_N, \dotsc, \MM_N^n )
\end{equation}
where the superscript refers to the replica under consideration.

Let $F\in \FF_{2n}$ and let $G: \mathbb R^{2n} \rightarrow \mathbb R$ a multivariable polynomial.
To keep track of the error dependence in our calculations, let us introduce the notation $Er(F, G)$ to mean any term involving $F$ and $G$ which can be bounded above by
\begin{multline*}
\left|Er(F, G)\right| \leq \left\{1+ \|F\|_{\infty} + \|F'\|_\infty + Lip(\nabla F) \right\}\times\\
\left\{1+C_G \nu\left(\left(1+\|\XX^{n}\|_2\right)^{2d}\right)^{\frac{1}{2}}  \right\}\frac{C}{\sqrt{N}}.
\end{multline*}
Here and below, the constant $C> 0$ will stand for a generic constant which may depend on  the number of replicas $n$, the inverse temperature $\beta$ and the external field $h$, but will not depend on $F, G$ or $N$.  The value of this constant may (and will) change from line to line.

For the convenience of the reader, let us sketch Stein's method (the reader may find a systematic introduction to the method in \cite{BC}).  The method exploits two basic ideas.  The first is that one can define distances between the distributions of random variables by optimizing over classes of test functions.  In this paper for example, we use the Wasserstein and L\'{e}vy metrics.

The other idea is that distributions satisfy `functional identities' which can be used to characterize them.  Let $z_{\eta}$ denote a Gaussian with mean $0$ and variance $\eta^2$. The well known integration-by-parts identity for Gaussian variables says
\begin{equation}
\label{Eq:GIP}
\mathbb E\left[ z_{\eta} F(z_{\eta})\right]= \eta^2 \mathbb E\left[F' (z_{\eta})\right].
\end{equation}
Now, given a test function $u$, suppose we solve the ordinary differential equation
\begin{equation*}
xF(x)- \eta^2 F'(x) = u(x)- \mathbb E\left[u(z_{\eta})\right],
\end{equation*}
then we obtain
\begin{equation*}
\left|\mathbb E \left[u(X)\right]- \mathbb E\left[u(z_{\eta})\right]\right|= \left|\mathbb E \left[XF(X)- \eta^2 F'(X)\right]\right|.
\end{equation*}
Hence, if we can show that $X$ approximately satisfies \eqref{Eq:GIP}, this leads to bounds on the distance of the distribution of $X$ to a Gaussian distribution in the appropriate sense.

Before stating our main results, let us recall the definitions of the aforementioned metrics:
Suppose $X,Y$ are random variables on $\mathbb R$  with associated distributions $\mu_X, \mu_Y$ and cumulative distribution functions $F_X, F_Y$, we define the Wasserstein distance by
\begin{equation*}
\WW_1(X, Y)= \sup_{\left \{\underset{Lip(u) \leq 1 }{u: \mathbb R \rightarrow \mathbb R \text{ s.t.}}\right\}} \left|\mathbb E[u(X)] - \mathbb  E[u(Y)]\right|.
\end{equation*}
Further, recall that weak-$*$ convergence on the space of measures on $\mathbb R$, when restricted to probability measures, is metrizable by the L\'{e}vy metric $\rho$: 
\begin{equation}
\label{Levy}
\rho(X, Y)= \inf\{\epsilon \geq 0: F_X(x- \epsilon) - \epsilon \leq F_Y(x) \leq F_X(x+\epsilon) +\epsilon \text{ for all $x \in \mathbb R$}\}.
\end{equation}

\subsection{Results}
\label{S:Results}
Let us begin by stating a CLT for $E_N$ under the quenched average measure $\nu$.  The proof of this result provides a template we shall use in the more complicated setting below.
Recall the definition of $q_p$ and $\nu$ from Section \ref{S:notes}.  We use the notation
\begin{align*}
a=& (1-q_2)^2\\
b=& 2q_2+ q_2^2 - 3q_4\\
c=& 1-6q_2-q_2^2+6q_4.
\end{align*}
Let us define the standard deviation $\sigma_A$ by
\begin{equation}
\label{sig_a}
\sigma_a^2=\frac{1}{2} + \frac{\beta^2 q_2 \left(a - \beta^2 (b^2 + ac) \right)}{(1- \beta^2a)(1-\beta^2c)+ \beta^4 b^2}.
\end{equation}

\begin{theorem}
\label{T:annealed}
Suppose that $(\beta, h)$ satisfies the high temperature condition \eqref{Eq:High-Temp}.  Then for any function $f\in \FF$, under the quenched average measure $\nu$
\begin{equation*}
\nu\left(\EE_Nf\left(\EE_N\right)\right)=\sigma_a^2\nu\left(f'\left(\EE_N\right)\right)+ \|f\|_{\FF} CN^{-1/2}.
\end{equation*}
\end{theorem}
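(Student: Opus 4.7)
The plan is to derive two independent linear relations among $\nu(\EE_N f(\EE_N))$, $\nu(\RR_{1,2} f(\EE_N))$ and $\nu(f'(\EE_N))$ — the first from Gaussian integration by parts in the disorder $\{g_{i,j}\}$, the second from the cavity interpolation $\nu_t$ of \eqref{Eq:nu-t} — and then invert the resulting $2\times 2$ linear system to extract $\sigma_a^2$.

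Step 1 (Gaussian integration by parts). Writing
$$\EE_N=\frac{1}{N}\sum_{i<j}g_{i,j}\sigma_i\sigma_j-\frac{\beta\sqrt N}{2}\left(1-q_2^2\right)$$
and applying the Gaussian IBP identity to each $g_{i,j}$ inside $\nu(\EE_N f(\EE_N))=\E\langle\EE_N f(\EE_N)\rangle$, one uses the standard Gibbs identity $\partial_{g_{i,j}}\langle F(\sigma^1)\rangle=\beta\langle F(\sigma^1)[\sigma^1_i\sigma^1_j-\sigma^2_i\sigma^2_j]/\sqrt N\rangle$ together with $\partial_{g_{i,j}}f(\EE_N)=f'(\EE_N)\sigma_i\sigma_j/N$. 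Combining these with the algebraic identity $\sum_{i<j}\sigma^1_i\sigma^1_j\sigma^2_i\sigma^2_j=\tfrac12(N^2 R_{1,2}^2-N)$ and the expansion $R_{1,2}^2-q_2^2=2q_2\RR_{1,2}/\sqrt N+\RR_{1,2}^2/N$, with the high-temperature hypothesis \eqref{Eq:High-Temp} absorbing the $\RR_{1,2}^2$ contribution into $Er(f,1)$, produces the first identity
$$\nu(\EE_N f(\EE_N))=\tfrac12\,\nu(f'(\EE_N))-\beta q_2\,\nu(\RR_{1,2}f(\EE_N))+Er(f,1).$$

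Step 2 (A second relation via cavity). The task is now to express $\nu(\RR_{1,2}f(\EE_N))$ in terms of the same unknowns. First swap $\EE_N$ for $\EE_N^-$ at cost $Er(f,1)$, then expand through the cavity interpolation,
$$\nu(\RR_{1,2}f(\EE_N^-))=\nu_0(\RR_{1,2}f(\EE_N^-))+\int_0^1\frac{d}{dt}\nu_t(\RR_{1,2}f(\EE_N^-))\,dt.$$
Differentiating $\nu_t$ produces a finite sum of terms of the form $\beta^2\nu_t((R^-_{k,k'}-q_2)\varepsilon^k\varepsilon^{k'}(\,\cdot\,))$ over the relevant replica indices $k,k'$. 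Taylor-expanding the overlap deviations and the $\tanh$ nonlinearities, and absorbing all higher-order remainders into $Er(f,1)$ via \eqref{Eq:High-Temp}, one is left with a leading contribution governed by the $t=0$ law, where the $\varepsilon^r$ are conditionally independent Ising spins in the random fields $\beta\sqrt{q_2}z_r+h$. Gaussian averages of products of $\tanh$ collapse to the moments $q_p$ of \eqref{Eq:qp}, and after collecting coefficients the polynomial combinations
$$a=(1-q_2)^2,\qquad b=2q_2+q_2^2-3q_4,\qquad c=1-6q_2-q_2^2+6q_4$$
emerge as the precise prefactors of $\nu(\RR_{1,2}f(\EE_N))$ and $\nu(\EE_N f(\EE_N))$. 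The upshot is a second identity of the schematic form
$$(1-\beta^2 c)\,\nu(\RR_{1,2}f(\EE_N))-\beta^2 b\,\nu(\EE_N f(\EE_N))=\beta q_2 a\,\nu(f'(\EE_N))+Er(f,1).$$

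Step 3 (Inversion). Treating Steps 1 and 2 as a $2\times 2$ linear system in the unknowns $\nu(\EE_N f(\EE_N))$ and $\nu(\RR_{1,2}f(\EE_N))$ gives determinant $(1-\beta^2 a)(1-\beta^2 c)+\beta^4 b^2$, which is strictly positive in the high-temperature regime. Cramer's rule then isolates the coefficient of $\nu(f'(\EE_N))$ in $\nu(\EE_N f(\EE_N))$ as precisely the expression \eqref{sig_a} for $\sigma_a^2$. The principal obstacle is the cavity bookkeeping in Step 2: organizing the Taylor expansions of $\tanh$ and of $R^-_{k,k'}-q_2$ so that the constants $a,b,c$ emerge exactly, and verifying that every remainder can be absorbed uniformly in $f$ into $Er(f,1)$. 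It is precisely the sixth-moment bound \eqref{Eq:High-Temp} — not merely the second-moment bound — that is needed to make these error estimates uniform in $f\in\FF$ and close the argument.
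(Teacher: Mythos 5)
Your Step 1 (Gaussian integration by parts) is correct and matches the paper. The gap is in Step 2. When you apply the cavity interpolation to $\nu(\RR_{1,2}f(\EE_N^-))$, the $\nu_0'$ term from Lemma \ref{L:Cav-Bound} and the local-field contributions do \emph{not} produce $\nu(\EE_N f(\EE_N))$: they produce overlaps among replicas, and in particular the quantity $\nu(\RR_{2,3}f(\EE_N))$, which is a genuinely new unknown because $f(\EE_N)$ depends only on replica $1$ and the overlap $R_{2,3}$ involves two off-replicas. This term is \emph{not} absorbable into $Er(f,1)$ — it is of the same order as $\nu(\RR_{1,2}f)$ — and it is not expressible in terms of $\nu(\EE_N f)$ or $\nu(\RR_{1,2}f)$ without an additional argument.

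Consequently your claimed system is not the right one and cannot close. The paper's closure requires a \emph{second} cavity identity (its Lemma \ref{L:Point-Two}) for $\nu(\RR_{2,3}f(\EE_N))$, which when differentiated produces $\nu(\RR_{1,2}f)$ and $\nu(\RR_{2,3}f)$ again; it is the pair of Lemmas \ref{L:Point-One} and \ref{L:Point-Two} that forms a $2\times 2$ system in the unknowns $\bigl(\nu(\RR_{1,2}f),\,\nu(\RR_{2,3}f)\bigr)$ with determinant $(1-\beta^2 a)(1-\beta^2 c)+\beta^4 b^2$; the Step-1 identity is then used only afterward to pass from $\nu(\RR_{1,2}f)$ to $\nu(\EE_N f)$. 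Your proposed system in $\bigl(\nu(\EE_N f),\,\nu(\RR_{1,2}f)\bigr)$ has determinant $1-\beta^2 c \mp \beta^3 q_2 b$, which is not the expression that enters $\sigma_a^2$ in \eqref{sig_a}; so even if the arithmetic in Step 3 were carried out it would yield the wrong variance. To repair the argument you must introduce $\nu(\RR_{2,3}f(\EE_N))$ as a third quantity and derive a second cavity relation for it, exactly as in the paper.
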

An easy consequence of this result in combination with Stein's method is a bound on the Wasserstein distance from the law of $\EE_N$ under $\nu$ to a Gaussian random variable.

Let us recall the basic result from the body of work known as Stein's Method which allows us to convert Theorem \ref{T:annealed} into a CLT with quantitative bounds.
\begin{lemma}[See \cite{BC}]
\label{L:Stein}
Let $g: \mathbb R \rightarrow \mathbb R$ be a Lipschitz continuous function with Lipschitz constant $L$.  Suppose that $f: \mathbb R \rightarrow \mathbb R$ solves the ordinary differential equation
\begin{equation}
\label{eqOU}
f'(x)- xf(x) = g(x)- \mathbb E\left[g(z_{\sigma=1})\right].
\end{equation}

Then
\begin{equation}
\|f\|_{\infty} \leq L, \quad, \|f'\|_{\infty} \leq\sqrt{\frac {2}{ \pi}} L, \quad \|f''\|_{\infty} \leq2 L.
\end{equation}
The last inequality is understood to mean that $f'$ is Lipschitz continuous with constant bounded by $2L$.
\end{lemma}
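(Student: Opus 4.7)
The plan is to solve the Stein ordinary differential equation \eqref{eqOU} explicitly via the integrating factor $e^{-x^2/2}$ and then read off the three stated bounds from the resulting formula. Writing $\tilde g:=g-\mathbb E[g(z)]$ with $z$ a standard Gaussian (so that $\tilde g$ is again $L$-Lipschitz but has mean zero against the standard Gaussian density), multiplying \eqref{eqOU} by $e^{-x^2/2}$ gives $\tfrac{d}{dx}\bigl(e^{-x^2/2}f(x)\bigr)=\tilde g(x)e^{-x^2/2}$; integrating from $-\infty$ then yields
\begin{equation*}
f(x)=e^{x^2/2}\int_{-\infty}^x\tilde g(t)e^{-t^2/2}\,dt=-e^{x^2/2}\int_x^\infty\tilde g(t)e^{-t^2/2}\,dt,
\end{equation*}
the equality of the two integrals following from $\mathbb E[\tilde g(z)]=0$. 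This is the unique bounded solution; boundedness at $\pm\infty$ rules out the addition of any multiple of $e^{x^2/2}$.

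To bound $\|f\|_\infty$, I would symmetrize using $\tilde g(t)=\tilde g(x)+\int_x^t g'(s)\,ds$ (valid almost everywhere by Rademacher's theorem) and substitute into whichever of the two representations is more convenient (the left-endpoint one for $x\leq 0$, the right-endpoint one for $x\geq 0$). After a Fubini interchange the result is an integral of $g'(s)$ against the weight $\Phi(s)$ or $1-\Phi(s)$, and Mills' ratio type estimates together with $|g'|\leq L$ produce $\|f\|_\infty\leq L$. For $\|f'\|_\infty$ I would pass to the Ornstein--Uhlenbeck representation $f(x)=-\int_0^\infty (P_s\tilde g)(x)\,ds$, where $P_s g(x)=\mathbb E\bigl[g\bigl(e^{-s}x+\sqrt{1-e^{-2s}}\,z\bigr)\bigr]$; one verifies directly that this satisfies \eqref{eqOU}. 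Differentiating under the integral gives $f'(x)=-\int_0^\infty e^{-s}(P_s g')(x)\,ds$, and a Gaussian integration by parts rewriting $(P_s g')(x)$ as $\mathbb E[z\,\tilde g(\cdot)]/\sqrt{1-e^{-2s}}$ combined with $\mathbb E|z|=\sqrt{2/\pi}$ yields the sharp constant $\|f'\|_\infty\leq\sqrt{2/\pi}\,L$ after the change of variables $u=e^{-s}$ is carried out carefully.

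Finally, for the Lipschitz constant of $f'$ I would differentiate the Stein ODE to obtain $f''(x)=f(x)+xf'(x)+g'(x)$ almost everywhere, the crucial point being that the apparent growth in $xf'(x)$ at infinity is cancelled by the structure of the explicit formula for $f$. Combining the previous bounds on $f$ and $g'$ (each at most $L$) with this cancellation yields $\|f''\|_\infty\leq 2L$, which is the almost-everywhere statement of Lipschitz continuity of $f'$. I expect the main obstacle to lie in extracting the \emph{sharp} constants $\sqrt{2/\pi}$ and $2$: the trivial majorizations give only $L$ and something larger, so one must exploit the Gaussian structure precisely (Mills' ratio, integration by parts, and a careful bookkeeping of the cancellation at infinity). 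A secondary technical point is the Lipschitz, rather than $C^1$, regularity of $g$, which forces differentiations of $g$ to be interpreted in a pointwise almost-everywhere sense and justified by a mollification argument if one wants pointwise bounds throughout.
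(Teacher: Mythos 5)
The paper does not prove this lemma; it is quoted as a standard Stein bound from \cite{BC}, so there is no internal proof to compare against. Your sketch is in the spirit of the classical argument, but there is a genuine error at the heart of the $\|f'\|_\infty$ step. The function $u(x):=-\int_0^\infty (P_s\tilde g)(x)\,ds$ does \emph{not} solve the first-order equation \eqref{eqOU}; it solves the second-order (Ornstein--Uhlenbeck generator) equation $u''-xu'=\tilde g$, and it is $u'$, not $u$, that solves \eqref{eqOU}. A quick check: with $\tilde g(x)=x$, equation \eqref{eqOU} has the bounded solution $f\equiv -1$, whereas $-\int_0^\infty(P_s\tilde g)(x)\,ds=-\int_0^\infty e^{-s}x\,ds=-x$. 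Because of this off-by-one slip, the quantity you compute and call $f'$, namely $-\int_0^\infty e^{-s}(P_sg')(x)\,ds$, is in fact $f$ itself, so the $\sqrt{2/\pi}$ you extract via Gaussian integration by parts and the substitution $u=e^{-s}$ is a bound on $\|f\|_\infty$, not $\|f'\|_\infty$. To get $\|f'\|_\infty\le\sqrt{2/\pi}\,L$ you must differentiate once more: start from the corrected representation $f(x)=-\int_0^\infty e^{-s}(P_s g')(x)\,ds$, apply $\partial_x(P_s h)(x)=\tfrac{e^{-s}}{\sqrt{1-e^{-2s}}}\,\mathbb E\bigl[z\,h(e^{-s}x+\sqrt{1-e^{-2s}}\,z)\bigr]$ with $h=g'$, bound $|g'|\le L$ and use $\mathbb E|z|=\sqrt{2/\pi}$, and then perform the same change of variables. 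The $\|f\|_\infty$ and $\|f''\|_\infty$ parts of your sketch outline the standard route; the cancellation you invoke in $f''=f+xf'+g'$ is real, but it requires the explicit integral formula to make quantitative, so as written those steps remain heuristic.
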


\begin{corollary}
\label{C:WassAnn-1}
Suppose that $(\beta, h)$ satisfies the high temperature condition \eqref{Eq:High-Temp}.   Consider the random variable $\EE_N$ under the quenched average measure $\nu$.  Let $z_{\sigma_A}$ denote a normal random variable with mean $0$ and variance $\sigma_A^2$.
Then we have
\begin{equation*}
\WW_1(\EE_N, z_{\sigma_A}) \leq CN^{-1/2}.
\end{equation*}
\end{corollary}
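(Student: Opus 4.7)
The plan is to combine Theorem \ref{T:annealed} with Lemma \ref{L:Stein} in the standard Stein's method fashion. By definition,
\begin{equation*}
\WW_1(\EE_N, z_{\sigma_A}) = \sup_{Lip(u) \leq 1} \bigl| \nu(u(\EE_N)) - \E[u(z_{\sigma_A})] \bigr|,
\end{equation*}
so it suffices to produce, for each $1$-Lipschitz $u$, a function $F \in \FF$ solving the Stein equation
\begin{equation*}
\sigma_A^2 F'(x) - x F(x) = u(x) - \E[u(z_{\sigma_A})],
\end{equation*}
with $\|F\|_{\FF}$ bounded by a constant depending only on $\sigma_A$ (hence on $\beta, h$).

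To build such an $F$ I would rescale Lemma \ref{L:Stein}, which is stated for variance one. Set $g(y) := u(\sigma_A y)$, which is $\sigma_A$-Lipschitz, and let $f$ be the solution of $f'(y) - y f(y) = g(y) - \E[g(z_1)]$ supplied by Lemma \ref{L:Stein}. Then $F(x) := f(x/\sigma_A)/\sigma_A$ satisfies the displayed equation above, and
\begin{equation*}
\|F\|_\infty \leq 1, \qquad \|F'\|_\infty \leq \sqrt{2/\pi}/\sigma_A, \qquad Lip(F') \leq 2/\sigma_A^2,
\end{equation*}
so $\|F\|_{\FF} \leq C$ uniformly in the choice of $1$-Lipschitz $u$. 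With $F$ in hand, Theorem \ref{T:annealed} applied to $f = F$ gives
\begin{equation*}
\nu\bigl(\EE_N F(\EE_N)\bigr) - \sigma_A^2 \nu\bigl(F'(\EE_N)\bigr) = \|F\|_{\FF}\, C N^{-1/2} = O(N^{-1/2}).
\end{equation*}
Substituting the Stein equation into the left-hand side yields $|\nu(u(\EE_N)) - \E[u(z_{\sigma_A})]| \leq C N^{-1/2}$, and taking the supremum over $u$ finishes the proof.

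There is no genuine obstacle here: Theorem \ref{T:annealed} has already performed the work of verifying the approximate Gaussian integration-by-parts identity for $\EE_N$, and Lemma \ref{L:Stein} is the standard Stein regularity estimate. The only minor point requiring care is the rescaling from the variance-one Stein ODE to the variance-$\sigma_A^2$ version, and checking that the resulting constant depends only on $\sigma_A$ (and not on $u$ beyond its Lipschitz norm), so that the bound passes uniformly to the supremum defining $\WW_1$.
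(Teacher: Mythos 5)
Your proof is correct and follows essentially the same route as the paper: rescale the variance-one Stein bounds of Lemma \ref{L:Stein} to variance $\sigma_A^2$, check that the resulting solution lies in $\FF$ with $\FF$-norm bounded in terms of $\sigma_A$ alone (hence uniformly over $1$-Lipschitz test functions), and then invoke Theorem \ref{T:annealed}. Your explicit rescaling $F(x) = f(x/\sigma_A)/\sigma_A$ with $g(y) = u(\sigma_A y)$ is in fact cleaner than the paper's version, which contains a small inconsistency in how the rescaled solution $f$ is defined.
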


\begin{remark}
Below (Theorem \ref{T:Sum}) we go further and derive a quenched average CLT for $\HH_N$, and use this information to obtain the behavior of the distribution for $\langle \HH_N \rangle$, for which, interestingly, a Stein characterizing equation does not seem easily accessible.
\end{remark}

Our strategy for the derivation of Theorem \ref{T:annealed} runs roughly as follows:  Through a combination of the cavity method and Gaussian integration by parts we reduce the identification of a functional identity between $\nu\left(\EE_Nf\left(\EE_N\right)\right)$ with some scalar multiple of $\nu\left(f'\left(\EE_N\right)\right)$ to a characterization of the interaction between the overlap $R_{1,2}$ and the variable $\EE_N$ through the quantity
\begin{equation*}
\beta q_2 \nu\left(\RR_{1,2}f \left(\EE_N\right)\right).
\end{equation*}
Using the cavity method, we derive a pair of (approximate) linear equations involving this quantity and the pair
\begin{equation*}
\beta q_2 \nu\left(\RR_{2,3}f \left(\EE_N\right)\right) \: \: \text{and } \nu\left(f'(\EE_N)\right).
\end{equation*}
Solving these equations in terms of $\nu\left(f'(\EE_N)\right)$ identifies the Stein characterizing equation.

Let us next formulate a summary of our main findings.  Recall that weak convergence for the space of probability measures on $\mathbb R$ is metrizable via the L\'{e}vy metric $\rho$ \eqref{Levy}.
\begin{theorem}[The Full Picture]
\label{T:Sum}
Suppose that $(\beta, h)$ satisfies the conditions of Theorem \ref{T:Main}.
Then:
\begin{enumerate}
\item
\label{I:1}
Consider the random variable $\HH_N$ under $\nu$.  There exists a variance $\sigma_A^2$ depending only on $(\beta, h)$ so that
\begin{equation*}
\WW_1(\HH_{N}, z_{\sigma_A}) \leq C N^{-1/2}.
\end{equation*}
\\
\item
\label{I:2}
Let $\LL_N$ denote the random variable $\HH_N-\langle \HH _N\rangle$ under the quenched Gibbs distribution. With $\sigma_Q^2$ as below in Corollary \ref{C:Quenched}, for all $\epsilon > 0$
\begin{equation*}
\lim_{N \rightarrow \infty} \mathbb P\left(\rho\left(\LL_N, z_{\sigma_Q} \right) \geq \epsilon \right) = 0.
\end{equation*}
 \\
\item
\label{I:3}
Finally consider the law of $\langle \HH _N\rangle$ under the Gaussian probability measure.  We have
\begin{equation*}
\lim_{N \rightarrow \infty} \rho(\mu_{\langle \HH _N\rangle}, z_{\sqrt{\sigma_A^2 - \sigma_Q^2}}) =0.
\end{equation*}
\end{enumerate}
\end{theorem}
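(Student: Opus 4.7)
The plan is to handle the three parts of Theorem \ref{T:Sum} in sequence, with the quenched statement (2) as the technical heart and (1), (3) serving as its bookends.

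For part (1), I extend the approach behind Theorem \ref{T:annealed} from $\EE_N$ to the full energy density $\HH_N = \EE_N + \MM_N$. Using the cavity interpolation $\nu_t$ of \eqref{Eq:nu-t}, differentiate $t \mapsto \nu_t(\HH_N f(\HH_N))$ and apply Gaussian integration by parts in both the couplings $g_{i,j}$ and the cavity variables $z_r$. Integrating $t$ from $0$ to $1$ yields an approximate identity expressing $\nu(\HH_N f(\HH_N))$ as a linear combination of $\nu(f'(\HH_N))$, $\beta q_2 \nu(\RR_{1,2} f(\HH_N))$, and $\beta q_2 \nu(\RR_{2,3} f(\HH_N))$, up to errors of the form $Er(f,1)$. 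Running the same procedure on the two overlap expressions closes a $3\times 3$ linear system; its invertibility in the high-temperature regime \eqref{Eq:High-Temp} is the ``invertibility of a single matrix'' alluded to in the introduction, and its unique solution gives a clean Stein identity $\nu(\HH_N f(\HH_N)) = \sigma_A^2 \nu(f'(\HH_N)) + Er(f,1)$. Lemma \ref{L:Stein} then delivers the Wasserstein bound at rate $N^{-1/2}$.

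For part (2), I repeat the cavity/Stein computation under the quenched measure, aiming to prove an identity of the form $\langle \LL_N f(\LL_N)\rangle = \sigma_Q^2 \langle f'(\LL_N)\rangle + \mathcal{E}_N$ with $\E[\mathcal{E}_N^2] \to 0$. The analogous $3\times 3$ system now has $g$-random coefficients, and the task is to show these concentrate around the deterministic values which define $\sigma_Q^2$. Here Corollary \ref{C:overlap}, and more generally the sixth-moment bound \eqref{Eq:High-Temp}, is critical because we need to control products like $\langle (R_{1,2}-q_2)\varphi(\LL_N) \rangle$ in $L^2(\mathbb{P}_g)$. For $h \neq 0$ the pair $(R_{1,2}, R_{2,3})$ is genuinely correlated under $\langle \cdot \rangle$, and this correlation is precisely the structural reason that $\sigma_Q^2 \neq \sigma_A^2$. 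Once the quenched Stein identity is established, a Lipschitz test function argument gives $\E[(\langle u(\LL_N)\rangle - \E[u(z_{\sigma_Q})])^2] \to 0$, which yields $\rho(\LL_N, z_{\sigma_Q}) \to 0$ in probability.

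Part (3) follows from (1) and (2) via a characteristic function argument. Since $\langle \HH_N\rangle$ is $\sigma(\{g_{i,j}\})$-measurable,
\begin{equation*}
\nu\bigl(e^{i\mu \HH_N}\bigr) = \E\bigl[ e^{i\mu \langle \HH_N\rangle}\, \langle e^{i\mu \LL_N}\rangle \bigr].
\end{equation*}
Part (2) gives $\langle e^{i\mu \LL_N}\rangle \to e^{-\mu^2 \sigma_Q^2/2}$ in probability, and the bound $|\langle e^{i\mu \LL_N}\rangle| \le 1$ permits dominated convergence, so the right-hand side is asymptotic to $e^{-\mu^2 \sigma_Q^2/2}\, \E[e^{i\mu \langle \HH_N\rangle}]$. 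By part (1), the left-hand side converges to $e^{-\mu^2 \sigma_A^2/2}$. Dividing yields pointwise convergence of the characteristic function of $\langle \HH_N\rangle$ to $e^{-\mu^2(\sigma_A^2 - \sigma_Q^2)/2}$, and L\'{e}vy's continuity theorem finishes the job (incidentally verifying $\sigma_A^2 \ge \sigma_Q^2$).

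The main obstacle will be part (2). The algebraic derivation of the quenched $3\times 3$ system mirrors the annealed one, but the real work is $L^2(\mathbb{P}_g)$ control of its $g$-dependent coefficients: products such as $\langle (R_{1,2}-q_2) f(\LL_N)\rangle$ couple the overlap to an observable depending on all of the disorder, and for $h \neq 0$ we must simultaneously track mixed fluctuations of overlaps with $E_N$ and with $M_N$. The sixth-moment assumption \eqref{Eq:High-Temp}, rather than the more standard second-moment bound, is essentially forced by these two-observable estimates.
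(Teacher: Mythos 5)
Your parts (1) and (3) align with the paper. For (1), the paper simply specializes Theorem \ref{T:Main} to $n=1$ with $w=(1,1)$ and then applies Stein's bound; your sketch re-derives the system from scratch but is the same idea. Part (3) is verbatim the paper's characteristic-function argument.

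Part (2) has a genuine gap. You claim to derive a quenched Stein identity of the form $\langle \LL_N f(\LL_N)\rangle = \sigma_Q^2\langle f'(\LL_N)\rangle + \mathcal E_N$ with $\E[\mathcal E_N^2]\to 0$ by ``repeating the cavity/Stein computation under the quenched measure,'' and then conclude by a ``Lipschitz test function argument.'' But the cavity method cannot produce such an identity for $\LL_N = \HH_N - \langle\HH_N\rangle$ directly, because $\langle\HH_N\rangle$ is a quenched average depending on the disorder in an intractable way; the paper explicitly flags this. What the cavity method (via replicas) actually gives is Corollary \ref{C:Quenched}: an $L^2(\mathbb P)$ bound on $\langle \HH_N f(\HH_N) - \sigma_Q^2 f'(\HH_N) - \langle\HH_N\rangle f(\HH_N)\rangle$ for \emph{deterministic} $f$, i.e.\ a Stein identity for a Gaussian with the \emph{random} mean $\mu_N = \langle\HH_N\rangle$. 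This is where the subtlety lies: to test $u(\LL_N)$ by the usual Stein route one would want to substitute $f(\cdot - \mu_N)$ into this identity, but that function is random (measurable with respect to the disorder), so Corollary \ref{C:Quenched} does not apply to it. Your ``Lipschitz test function argument'' silently makes exactly this illegal substitution.

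The paper's resolution is Lemma \ref{L:Sour}: it restricts to the exponential family $f_t(x)=e^{itx}$, which is the unique class for which a shift by $\mu_N$ acts multiplicatively as $e^{-it\mu_N}$ with unit modulus and $\mathcal F_N$-measurability, allowing the random mean to be factored out cleanly. One then compares the quenched characteristic function $\phi_N(t)=\ee^N[e^{it(X_N-\mu_N)}]$ to $\psi_N(t)=e^{-\sigma_N^2 t^2/2}$ via a Gronwall inequality in $t$, concluding $\ee|\phi_N(t)-\psi_N(t)|\to 0$ pointwise and hence weak*-convergence in probability. This lemma (or some replacement addressing the same random-mean issue) is indispensable and missing from your proposal; without it, the passage from Corollary \ref{C:Quenched} to the statement $\rho(\LL_N, z_{\sigma_Q})\to 0$ in probability is not justified.
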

We next provide a brief sketch of what is to come, in particular regarding \eqref{I:2} of the previous theorem.\\

\textit{Sketch of Quenched Central Limit Theorem:}\newline
Recall the definition of $\XX_N^n$ \eqref{Eq:XX}.
To prove the quenched CLT, we proceed as follows. The main computational ingredient, stated below as Theorem \ref{T:Main} is to derive multivariable quenched average functional identities for the vector $\XX_N^n$.  This derivation is an elaboration of Theorem \ref{T:annealed}.  Stein's Method bounds then give Corollary \ref{Eq:AMCLT}, which in particular implies an \textit{quenched average} CLT for $\HH_N$.

Next, we use replicas to turn this set of quenched average functional identities into a \textit{quenched} functional identity by replicating the spin system.  By this we mean elaborate use of the following basic observation:  Given a function $f: \mathbb R \rightarrow \mathbb R$, suppose we are interested in size of the quenched variance, $\langle(f(\HH_N)- \langle f(\HH_N)\rangle)^2\rangle$.  This is itself a random variable, but we may estimate it by taking expectations:
\begin{equation*}
\mathbb E\left[\langle(f(\HH_N)- \langle f(\HH_N)\rangle)^2\rangle^2\right].
\end{equation*}
Then we may always represent this expression in terms of a family of $6$ replicas:
\begin{multline*}
\mathbb E\left[\langle(f(\HH_N)- \langle f(\HH_N)\rangle)^2\rangle^2\right] \\
= \nu\left((f(\HH^1_N)- f(\HH^2_N))(f(\HH^1_N)- f(\HH^3_N))(f(\HH^4_N)-f(\HH^5_N))(f(\HH^4_N)- f(\HH^6_N))\right).
\end{multline*}
This allows to employ the previously derived quenched average functional identities for $\XX_N^n$.

The application of this idea to get $L^2$ bounds on the quenched Stein equation is stated as Corollary \ref{C:Quenched}.  
Part of the subtlety here is that under the quenched Gibbs state, $\HH_N$ is \textit{not} centered.  Moreover, it seems unclear how to derive functional identities directly for the centered variable $\HH_N - \langle \HH_N \rangle$ since $\langle \HH_N \rangle$ depends on the $\{g_{i, j}\}$ and $\sigma$ in a complicated way.  Thus our quenched identity is stated in terms of a functional equation for a Gaussian variable shifted by the quenched mean $\langle \HH_N \rangle$.

This approach also leads to slight complications since the solution $f$ to the Stein equation
\begin{equation}
xf(x) - \sigma^2 f'(x) - \mu f(x) = g(x) - \mathbb E\left[g(\sigma z + \mu) \right]
\end{equation}
depends nonlinearly on $\mu, \sigma$ (for us $\sigma$ is fixed, however).  This problem is dealt with through Lemma \ref{L:Sour}, which may be of some independent interest.

Finally, using the quenched average CLT for $\HH_N$ and the quenched CLT for $\HH_N$ (or equivalently $\HH_N - \langle \HH_N \rangle$) and a characteristic function argument, we derive a CLT for the variable $\langle \HH_N \rangle$, which provides a fairly comprehensive picture of the fluctuations of the energy density at high temperatures (see Theorem \ref{T:Sum}).  This completes our sketch.

Let us next formalize what is proved in the ensuing sections.
To be precise, we require a bit of notation.
Let
\begin{equation*}
\tilde {\mathfrak A}^{k , k'}_{r,r'}:= \nu_0\left((\varepsilon^k\varepsilon^{k'} -q_2) (\varepsilon^r \varepsilon^{r'}- q_2) \right).
\end{equation*}
Recall that under $\nu_0$ replicas associated to the final site decouple from the previous $N-1$ and so these entries can be expressed explicitly in terms of $\{1, q_2, q_4\}$, depending only on the number of indices in common.

Let $\mathfrak A$ denote the ${n+2 \choose 2} \times {n+2 \choose 2}$ matrix indexed by the ordered pairs $1 \leq r<r' \leq n+2,\; 1 \leq k<k'\leq n+2$ with entries
\begin{equation}
\label{Eq:Replica-Matrix}
\frak A^{k , k'}_{r,r'}:=
\begin{cases}
\tilde {\frak A}^{k,k'}_{r, r'} &\text{$r' \leq k' \leq n$ or $r' < k'=n+1$},\\
- n \tilde {\frak A}^{k,k'}_{r, r'}  &\text{$k' \leq n, r'= n+1$},\\
{n+1 \choose 2} \tilde {\frak A}^{k,k'}_{r, r'}  &\text{$k' \leq n, r= n+1$},\\
\tilde {\frak A}^{k,n+1}_{r, n+1}- (n+1)\tilde {\frak A}^{k,n+1}_{r, n+2}  &\text{$k' = n+1,\,  r'= n+1$},\\
-(n+1)\tilde {\frak A}^{k,n+1}_{n+1, n+2} + {n+2 \choose 2}(q_4- q^2_2)  &\text{$k'= n+1,\, r = n+1,\, r'= n+2$},\\
1-q_2^2 - 2(n+2)(q_2-q_2^2)+{n+3 \choose 2}(q_4-q_2^2) &\text{both pairs are $\{n+1, n+2\}$},\\
0 & \text{otherwise (i.e. $ r \leq n$, $r'=n+2$}).
\end{cases}
\end{equation}
Parenthetically, we remark that this matrix is closely related to the coefficients appearing in Lemma \ref{L:cavitation}.
Recall the definition of $\FF_{2n}$ \eqref{Eq:F}.  For any  $F\in \FF_{2n}$ and any multivariable 
polynomial $G: \mathbb R^{2n} \rightarrow \mathbb R$, let $GF(\XX_N^n)= G(\XX_N^n)F(\XX_N^n)$

To put into context the following result, recall that if $\vec \GG$ is a Gaussian vector on $\mathbb R^d$ with covariance matrix $\CC$, then for any sufficiently regular $F: \mathbb R^d \rightarrow \mathbb R$,
\begin{equation}
\mathbb E[\vec \GG F(\vec \GG)] = \CC \cdot \mathbb E[\nabla F(\vec \GG)]
\end{equation}

\begin{theorem}
\label{T:Main}
Suppose that $(\beta,h)$ satisfies the high temperature condition \eqref{Eq:High-Temp}.  Let $\beta$ be small enough so that $Id - \beta^2 \frak A$ is invertible.  Then there exists a positive semi-definite covariance matrix $\frak C: \mathbb R^{2n} \rightarrow \mathbb R^{2n}$ so that 
\begin{align}
\label{eq:express}
\nu\left(\EE^i GF\left(\XX^n_N\right)\right)&= \mathfrak C \cdot  \nu\left(\nabla GF\left(\XX_N^n\right)\right)_i+ Er(F, G),\\
\nu\left(\MM^i GF\left(\XX^n_N\right)\right)&=\frak C \cdot  \nu \left(\nabla GF\left(\XX_N^n\right)\right)_{n+i} + Er(F, G).
\end{align}
\end{theorem}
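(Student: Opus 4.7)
The plan is to generalize the strategy behind Theorem \ref{T:annealed} to the multivariate setting, combining Gaussian integration by parts for the energy coordinates with a cavity interpolation for the magnetization coordinates, and then closing up the resulting system via invertibility of $Id - \beta^2 \mathfrak A$.

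First, I would tackle the energy part $\nu(\EE^i GF(\XX^n_N))$. Because $\EE^i_N$ is a linear combination of the Gaussian couplings $g_{j,k}$ (plus a deterministic centering), one can write it as $\sum_{j<k} \frac{1}{\sqrt N} g_{j,k} \sigma^i_j \sigma^i_k - \frac{\beta\sqrt N}{2}(1-q_2^2)$ and perform Gaussian integration by parts. The derivatives of $\langle \cdot \rangle$ with respect to $g_{j,k}$ produce $\beta$ times spin products that recombine into overlaps $R_{r,r'}$; the derivatives of $\EE^j$ and $\MM^j$ with respect to $g_{j,k}$ recombine into $\frac{1}{N}\sum_{j<k}\sigma^r_j \sigma^r_k \sigma^{r'}_j \sigma^{r'}_k = \frac{N-1}{2}(R_{r,r'}^2 - \tfrac{1}{N})$ type quantities; and the derivatives of $F$ bring out components of $\nabla F$ evaluated at $\XX^n_N$ multiplied by the corresponding replica-replica overlap combinations. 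After concentrating overlaps on their mean $q_2$ via the hypothesis \eqref{Eq:High-Temp}, the only terms that do not immediately collapse into either pieces of $\nu(\nabla GF)$ or $Er(F,G)$ are the linear-in-$\RR$ remainders
$$
\beta^2 \sum_{1\le k<k'\le n+2} A^{i;k,k'}\, \nu\bigl(\RR_{k,k'}\, GF(\XX^n_N)\bigr)
$$
with structural coefficients $A^{i;k,k'}$ reflecting how many times the index $i$ appears among $\{k,k'\}$ and introducing the two additional replicas $n+1, n+2$ that arise in the cavitation.

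Next, for the magnetization $\nu(\MM^i GF(\XX^n_N))$, since $\MM^i$ does not directly depend on the disorder, I would use the cavity interpolation $\nu_t$ introduced in \eqref{Eq:nu-t}: write $\nu = \nu_1$ and use the fundamental theorem of calculus in $t$, differentiating the interpolating Hamiltonian $H_{N,t}^n$. The $t$-derivative produces a sum of Gaussian integration by parts terms analogous to the energy case, and $\nu_0$ evaluations factorize on the last spin so as to contribute the constant factor $q_p$-type quantities collected in the \emph{decoupled} matrix $\tilde{\mathfrak A}$. The outcome is an expression analogous in structure to Step 1, with the same type of remainder overlap-weighted terms.

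Step three is to evaluate $\nu(\RR_{k,k'}^- GF(\XX^n_N))$ itself by the same cavity interpolation, using Lemma \ref{L:cavitation} (or its analog) to expand $\nu$ around $\nu_0$. At $t=0$ the pair $(\varepsilon^k\varepsilon^{k'}-q_2)$ on the $N$-th site decouples and returns entries of $\tilde{\mathfrak A}$, while the boundary-to-interior conversions between indices $\le n$ and the extra replicas $n+1, n+2$ produce the case-by-case weights $-n$, $\binom{n+1}{2}$, $\binom{n+2}{2}$, etc. appearing in \eqref{Eq:Replica-Matrix}. This precisely assembles the matrix $\mathfrak A$ and yields the approximate linear relation
$$
(Id - \beta^2 \mathfrak A)\, \vec v = \vec w + Er(F,G),
$$
where $\vec v$ is the vector of $\nu(\RR_{k,k'}\, GF(\XX_N^n))$ over ordered pairs $1\le k<k'\le n+2$ and $\vec w$ is an explicit linear combination of the entries of $\nu(\nabla GF(\XX_N^n))$. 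Under the hypothesis that $Id - \beta^2 \mathfrak A$ is invertible, solving and substituting back into the expressions from Steps 1 and 2 yields the desired identities, with $\mathfrak C$ defined by a combination of the $q_p$'s, the structural coefficients, and $(Id-\beta^2\mathfrak A)^{-1}$. Positive semi-definiteness of $\mathfrak C$ is then inherited from its identification as the limiting covariance of $\XX_N^n$, which follows from applying the identity to the test choice $G F = $ coordinates of $\XX_N^n$ and taking $N\to\infty$.

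The hard part will be the bookkeeping in Step 3: there are $\binom{n+2}{2}$ overlap pairs once the two cavity replicas are introduced, and each must be treated case-by-case depending on how many of its indices lie in $\{1,\dots,n\}$ versus $\{n+1,n+2\}$, giving the seven branches of \eqref{Eq:Replica-Matrix}. The companion technical burden is controlling the error terms uniformly in $F, G$: each appearance of $(R_{k,k'}-q_2)^m$ for $m\ge 2$ must be bounded via \eqref{Eq:High-Temp} together with Cauchy–Schwarz against polynomial-growth factors $(1+\|\XX^n\|_2)^d$, and each derivative of $F$ must be matched against $\|F\|_\infty + \|F'\|_\infty + \mathrm{Lip}(\nabla F)$, so that the error collapses into the required form $Er(F,G)$.
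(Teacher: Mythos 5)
Your proposal follows essentially the same route as the paper: Gaussian integration by parts to reduce $\nu(\EE^i GF)$ to gradient terms plus overlap-weighted remainders, the cavity interpolation $\nu_t$ for $\nu(\MM^i GF)$, a cavity expansion (Lemma \ref{L:cavitation} / Corollary \ref{C:overlap}) to derive the approximate linear system $(Id-\beta^2\mathfrak A)\nu(\vec\RR\, GF) = \nu(\DD GF) + Er(F,G)$, inversion of $Id-\beta^2\mathfrak A$ and back-substitution, and finally identification of $\mathfrak C$ as the limiting covariance of $\XX^n_N$ by specializing $G$ to a coordinate function and $F\equiv 1$ (which gives positive semi-definiteness). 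This is exactly the structure the paper packages into Lemma \ref{P:prep} and Lemma \ref{L:overlap} before the short closing argument of Theorem \ref{T:Main}.
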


\begin{remark}
Implicit in the above theorem is the fact that $\frak C$ arises as the limiting covariance matrix of the vector $\XX^n_N$.  This follows by specializing $G$ to be one of the coordinate functions and $F$ to be the constant function $1$, however this is not how we identify $\frak C$.  Also, it is worth noting that by exchangeability of replicas under $\nu$, the entries of $\frak C$ take only $6$ distinct values.
\end{remark}
\begin{remark}
Our condition on the invertibility of $Id- \beta^2 \frak A$ is a bit unsatisfying, but certainly holds for $\beta$ small enough (independent of $h$).  We have not attempted to characterize precisely invertibility. Qualitatively, the result is important, since one can envision applying similar approaches for proving CLTs in other (mean field) spin glass models.
\end{remark}

Specializing Theorem \ref{T:Main} to linear functionals of $\XX^n_N$, we may apply the Stein's method machinery.
\begin{corollary}[quenched average Multivariate CLT]
\label{Eq:AMCLT}
Let $w \in \mathbb R^{2n}$ be fixed and suppose $f \in \FF$.  Let $\XX_w= w \cdot \XX^n_N$.  Then we have
\begin{equation*}
\nu\left(\XX_w f \left(\XX_w\right)\right)= w\cdot \frak C  w \:  \nu\left( f' \left( \XX_w \right)\right) + \|f\|_{\FF}C N^{-1/2}.
\end{equation*}
Consequently,
\begin{equation*}
\WW_1(\XX_w, z_{\sigma_w}) \leq CN^{- 1/2}
\end{equation*}
where
\begin{equation*}
\sigma_w = w \cdot \frak C w.
\end{equation*}
\end{corollary}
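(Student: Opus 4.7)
The plan is to extract the scalar functional identity by applying Theorem \ref{T:Main} to a cleverly chosen pair $(F, G)$, and then invoke Stein's method (Lemma \ref{L:Stein}) with a rescaling to handle the non-unit variance $\sigma_w^2$. Since Theorem \ref{T:Main} already provides a vector-valued Gaussian integration-by-parts identity for $\XX_N^n$ with explicit error control, the corollary is essentially a projection onto the direction $w$.

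\textbf{Step 1 (Functional identity).} Take $G \equiv 1$ (so the polynomial degree is $d = 0$) and $F(x) := f(w \cdot x)$ for $x \in \R^{2n}$. Since $f \in \FF$, one has $F \in \FF_{2n}$: indeed $\nabla F(x) = f'(w \cdot x)\, w$, which gives $\|F\|_\infty = \|f\|_\infty$, $\|\nabla F\|_\infty \le \|w\|_2 \|f'\|_\infty$, and $\mathrm{Lip}(\nabla F) \le \|w\|_2^2 \mathrm{Lip}(f')$. Crucially, because $d = 0$, the polynomial moment factor appearing in $Er(F, G)$ collapses to an absolute constant, so $|Er(F, 1)| \le C \|f\|_{\FF} N^{-1/2}$ with $C$ depending only on $\beta, h, n$ and $w$. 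Now multiply the first identity of Theorem \ref{T:Main} by $w_i$ and sum over $i \in \{1,\dots,n\}$, and similarly the second identity against $w_{n+i}$. The left-hand sides assemble to $\nu(\XX_w f(\XX_w))$, while the right-hand sides yield
\[
\sum_{i,j=1}^{2n} w_i\, \mathfrak{C}_{ij}\, w_j \; \nu(f'(\XX_w)) \;=\; (w \cdot \mathfrak{C} w)\, \nu(f'(\XX_w)),
\]
plus a cumulative error bounded by $C \|f\|_\FF N^{-1/2}$, which is the first claim.

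\textbf{Step 2 (Stein's method for general variance).} Set $\sigma_w^2 := w \cdot \mathfrak{C} w \ge 0$. If $\sigma_w = 0$ the first identity reduces to $\nu(\XX_w f(\XX_w)) = O(N^{-1/2})$ for every $f \in \FF$, which forces $\XX_w \to 0$ and makes the Wasserstein claim trivial, so assume $\sigma_w > 0$. Given a $1$-Lipschitz test function $u$, solve the Stein equation
\[
\sigma_w^2 f'(x) - x f(x) = u(x) - \E[u(z_{\sigma_w})].
\]
The substitution $\tilde f(y) = f(\sigma_w y)/\sigma_w$, $\tilde u(y) = u(\sigma_w y)$ reduces this to the unit-variance equation of Lemma \ref{L:Stein} with $\tilde u$ being $\sigma_w$-Lipschitz, yielding $\|f\|_\infty \le \sigma_w$, $\|f'\|_\infty \le \sqrt{2/\pi}$, and $\mathrm{Lip}(f') \le 2/\sigma_w$; in particular $\|f\|_\FF$ is controlled by a constant depending only on $\sigma_w$. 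Substituting $f$ into the identity of Step 1 gives
\[
|\nu(u(\XX_w)) - \E[u(z_{\sigma_w})]| = |\nu(\sigma_w^2 f'(\XX_w) - \XX_w f(\XX_w))| \le C N^{-1/2},
\]
and taking the supremum over $1$-Lipschitz $u$ yields the Wasserstein bound $\WW_1(\XX_w, z_{\sigma_w}) \le C N^{-1/2}$.

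\textbf{Main obstacle.} There is no deep obstacle here; the corollary is a mechanical unpacking of Theorem \ref{T:Main}. The only subtlety worth flagging is that the error term in Theorem \ref{T:Main} must depend on $f$ only through $\|f\|_{\FF}$ uniformly in $N$, which is exactly why choosing $G \equiv 1$ is essential: with $d = 0$ the moment factor $\nu((1 + \|\XX^n\|_2)^{2d})^{1/2}$ is trivially bounded, and no \emph{a priori} moment estimate for $\XX^n_N$ is needed beyond what is already embedded in Theorem \ref{T:Main}.
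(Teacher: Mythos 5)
Your proposal is correct and follows essentially the same path the paper intends: the paper states that the proof of Corollary \ref{Eq:AMCLT} ``is analogous to that of Corollary \ref{C:WassAnn-1} and is omitted,'' and you have carried out precisely that analogy --- specialize Theorem \ref{T:Main} to $G\equiv 1$ and $F(x)=f(w\cdot x)$ to project the vector identity onto the direction $w$, then apply the rescaled Stein bounds of Lemma \ref{L:Stein} as in the proof of Corollary \ref{C:WassAnn-1}. One small bookkeeping slip: the substitution reducing $\sigma_w^2 f'(x)-xf(x)=u(x)-\E[u(z_{\sigma_w})]$ to the unit-variance equation should be $\tilde f(y)=\sigma_w f(\sigma_w y)$ rather than $f(\sigma_w y)/\sigma_w$ (your version introduces a spurious factor of $\sigma_w^2$ on the right), but this only shifts the numerical constants in the bounds on $\|f\|_{\FF}$ and does not affect the conclusion.
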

The proof of this corollary is analogous to that of Corollary \ref{C:WassAnn-1} and is omitted from the paper.

Next we reformulate Theorem \ref{T:Main} so as to obtain a quenched CLT for the energy density of the SK model.  By Theorem \ref{T:Main},
\begin{equation}
\sigma_Q^2 = \lim_{N \rightarrow \infty} \nu\left((\HH_N - \langle \HH_N \rangle)^2\right)
\end{equation}
exists.  Moreover, it may be given explicitly by 
\begin{multline}
\sigma_Q^2= 1- q_2^2 + 2\beta q_2(q_3-q_1) 
-2 \beta q_2 \left([I - \beta^2 \frak C]^{-1} \cdot \vec v^e\right)_{1, 3}\\
-2 (\beta^2 q_1(1-q_2) + \beta^2(q_3 - q_1 q_2) - 2\beta q_2) \left([I - \beta^2 \frak C]^{-1} \cdot \vec v^m\right)_{1, 3}
\end{multline}
where $v^e_{k, k'} = \vec{w}^{k, k'} \cdot (1, -1, 0, 0)$ and $\vec v^m_{k, k'} = \vec{w}^{k, k'} \cdot (0, 0, 1, -1) $ and the ${4 \choose 2}$ vectors $\vec w^{k, k'}$ are given explicitly:
For $1 \leq k < k' \leq 4$ and $1 \leq r \leq 4$, let $A(k, k', r), \: B(k, k',r)$ be defined by
\begin{equation*}
A(k,k',r):=\nu_0\left(\left(\varepsilon^{k} \varepsilon^{k'}- q_2\right) \varepsilon^{r}\right)
\end{equation*}
and
\begin{equation*}
B(k,k',r):=
\begin{cases}
\beta q_2 b & \text{ if $k , k' \neq r$}\\
\beta a & \text{ if $k=r$ or  $k' = r$}\\
\end{cases}
\end{equation*}
otherwise.
For each pair $\{k, k'\}$, we denote by $\vec{w}_{k, k'}\in \mathbb R^{2n}$ the vector
\begin{equation*}
\vec{w}_{k, k'} :=\left(B(k,k', 1), B(k,k', 2),A(k, k',1), A(k, k',2),\right).
\end{equation*}

\begin{corollary}
\label{C:Quenched}
Suppose that $(\beta, h)$ satisfies the conditions of Theorem \ref{T:Main}.  Then there exists a deterministic variance $\sigma_Q^2$ so that
\begin{equation*}
\mathbb E\left[\left( \langle\left(\HH_N - \langle\HH_N\rangle\right)^2 \rangle - \sigma_Q^2 \right)^2 \right] \leq CN^{-1/2}.
\end{equation*}
Moreover, for any function $f$ so that $f, f'\in \FF$, we have
\begin{equation*}
\mathbb E\left(\langle \HH_Nf\left(\HH_N\right)- \sigma^2_Q f' \left(\HH_N\right) - \langle\HH_N\rangle f(\HH_N) \rangle^2 \right) \\
\leq \left(\|f\|_{\FF}+ \|f'\|_{\FF}\right)^2 CN^{-1/2}.
\end{equation*}
\end{corollary}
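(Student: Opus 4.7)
The plan is to use a replica trick that converts quenched squared quantities into four-replica quenched average quantities, then directly invoke Theorem \ref{T:Main}. Since $\langle \cdot \rangle$ is a probability measure on spins, $\E[\langle Y \rangle^k] = \nu(Y^1 \cdots Y^k)$, with each $Y^i$ denoting an evaluation on the $i$-th replica. Thus every quenched $L^2$ quantity in the corollary becomes a four-replica expression that Theorem \ref{T:Main} is designed to handle.

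For the variance bound, write $\langle (\HH_N - \langle \HH_N \rangle)^2 \rangle = \tfrac{1}{2} \langle (\HH_N^1 - \HH_N^2)^2 \rangle$, so that
\[
\E\bigl[\langle (\HH_N - \langle \HH_N \rangle)^2 \rangle^2\bigr] = \tfrac{1}{4}\nu\bigl((\HH_N^1 - \HH_N^2)^2 (\HH_N^3 - \HH_N^4)^2\bigr).
\]
Iterating Theorem \ref{T:Main} with polynomial $G$ and $F = 1$ gives joint-moment convergence of $(\HH_N^1, \ldots, \HH_N^4)$ to a Gaussian vector with covariance $\frak C$, at rate $O(N^{-1/2})$. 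Exchangeability of replicas under $\nu$ forces the entries of $\frak C$ to depend only on whether indices coincide, so in the limit $\cov(Z^1 - Z^2, Z^3 - Z^4) = 0$, and independence of these two Gaussians yields $\E[(Z^1 - Z^2)^2 (Z^3 - Z^4)^2] = 4\sigma_Q^4$, where we set $\sigma_Q^2 := \tfrac{1}{2} \var(Z^1 - Z^2)$. Combined with the parallel statement $\nu((\HH_N^1 - \HH_N^2)^2) = 2\sigma_Q^2 + O(N^{-1/2})$, the expansion $\E[(X - \sigma_Q^2)^2] = \E[X^2] - 2\sigma_Q^2 \E[X] + \sigma_Q^4$ yields the first bound.

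For the second bound, define $\Phi(x, y) := (x - y) f(x) - \sigma_Q^2 f'(x)$. The two-replica representation $\langle G(\HH_N) \rangle = \langle \Phi(\HH_N^1, \HH_N^2) \rangle$ squares to
\[
\E[\langle G(\HH_N) \rangle^2] = \nu\bigl(\Phi(\HH_N^1, \HH_N^2) \Phi(\HH_N^3, \HH_N^4)\bigr).
\]
Expand this into four products $\nu(AC), \nu(AD), \nu(BC), \nu(BD)$ and apply Theorem \ref{T:Main} to each, absorbing the polynomial pieces into $G$ (of degree at most $2$) and the smooth pieces into $F$ (products like $f(\HH^1) f(\HH^3)$ or $f(\HH^1) f'(\HH^3)$, which lie in $\FF_{2n}$ precisely because $f, f' \in \FF$). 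Applying Stein's identity to $(\HH^1 - \HH^2)$, decomposed as $(\EE^1 + \MM^1) - (\EE^2 + \MM^2)$, one obtains by exchangeability that only the diagonal covariance $\cov(Z^1, Z^1 - Z^2) = \sigma_Q^2$ contributes: the cross-covariances $\cov(\HH^1 - \HH^2, \HH^3)$ and $\cov(\HH^1 - \HH^2, \HH^4)$ vanish. Thus each polynomial factor is replaced by $\sigma_Q^2$ times a derivative, which is exactly what the $-\sigma_Q^2 f'$ term inside $\Phi$ subtracts off. The four-term alternating sum telescopes down to the Stein errors $Er(F, G)$, collectively bounded by $(\|f\|_\FF + \|f'\|_\FF)^2 C N^{-1/2}$.

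The main obstacle is verifying the consistency of the $\sigma_Q^2$ appearing in the two parts: defined in the first as the limiting variance from $\frak C$, and required in the second as precisely the scalar Stein's identity produces for $\HH^1 - \HH^2$ against smooth functions of $\HH^1$. Both coincide through the identity $\sigma_Q^2 = \cov(Z^1, Z^1) - \cov(Z^1, Z^2)$, a direct consequence of the exchangeability structure of $\frak C$. Beyond this consistency check, one must confirm that $Er(F, G)$ in Theorem \ref{T:Main}, with its dependence on $\nu((1 + \|\XX^n\|_2)^{2d})^{1/2}$ for $d \le 2$, indeed delivers $O(N^{-1/2})$; this reduces to uniform-in-$N$ moment bounds on $\XX_N^n$, obtainable iteratively from Theorem \ref{T:Main} itself applied to polynomial test functions.
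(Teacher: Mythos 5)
Your proposal follows the paper's approach in its essentials: represent the two quenched $L^2$ quantities as four-replica averages under $\nu$, then close the computation by iterating Theorem~\ref{T:Main}. The differences are cosmetic (you write the first quantity via $\tfrac12\langle(\HH^1-\HH^2)^2\rangle$ where the paper uses $\langle(\HH^1-\HH^2)(\HH^1-\HH^3)\rangle$ and therefore ends up with six replicas rather than four; both decompositions are correct and both collapse by the same exchangeability argument), and your treatment of the second bound, once unwound, is the paper's iterated cancellation scheme.

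One point you should repair: at the end you propose to obtain the needed uniform-in-$N$ moment bounds on $\XX^n_N$ ``iteratively from Theorem~\ref{T:Main} itself applied to polynomial test functions.'' That is circular. The error term $Er(1,G)$ in Theorem~\ref{T:Main} already contains $\nu\bigl((1+\|\XX^n\|_2)^{2d}\bigr)^{1/2}$, so applying the theorem to a degree-$d$ polynomial $G$ to bound a $d$-th moment presupposes control of a strictly higher moment, and the recursion never grounds out. The paper instead obtains the a priori bound $\nu\bigl((\HH^1_N)^6\bigr)\le C$ directly from the high-temperature condition~\eqref{Eq:High-Temp} via Gaussian integration by parts, independently of Theorem~\ref{T:Main}. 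With that substitution your argument is sound.
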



\section{Preliminaries}
\label{S:Prelims}
We begin with two basic facts which will be used often below.
The key lemma, proved in \cite{Talagrand-book}, is as follows.
\begin{lemma}[Proposition 2.5.3 from \cite{Talagrand-book}]
\label{L:Cav-Bound}
Suppose $\beta_0$ satisfies the high temperature condition \eqref{Eq:High-Temp}.
Let $ \beta \leq \beta_0$.
Then for any $n \in \mathbb N$, there is a $K(n)>0$ such that for all $f :\Sigma^n_N \rightarrow \mathbb R$,
\begin{align}
&
\label{Cav-1}\left|\nu\left(f\right)- \nu_{0}\left(f\right)\right| \leq \frac{K\left(n\right)}{\sqrt{N}} \nu\left(f^2\right)^{\frac 12}\\
&\label{Cav-2}\left|\nu\left(f\right)- \nu_{0}\left(f\right) - \nu_0'\left(f\right)\right| \leq \frac{K\left(n\right)}{N} \nu\left(f^2\right)^{\frac 12}.
\end{align}
\end{lemma}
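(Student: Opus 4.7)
The plan is to derive both inequalities from the fundamental theorem of calculus applied to the interpolating family $\{\nu_t\}_{t \in [0,1]}$ defined by \eqref{Eq:nu-t}. Since $\nu_1 = \nu$, I would write
\begin{equation*}
\nu(f) - \nu_0(f) = \int_0^1 \nu_t'(f)\, dt \qquad\text{and}\qquad \nu(f) - \nu_0(f) - \nu_0'(f) = \int_0^1 \int_0^t \nu_s''(f)\, ds\, dt,
\end{equation*}
so the problem reduces to showing $|\nu_t'(f)| \leq K(n)\, N^{-1/2} \nu(f^2)^{1/2}$ and $|\nu_t''(f)| \leq K(n)\, N^{-1}\nu(f^2)^{1/2}$ uniformly in $t \in [0,1]$.

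To compute $\nu_t'(f)$, I would differentiate $\exp(\beta H_{N,t}^n)$ in $t$ and apply Gaussian integration by parts \eqref{Eq:GIP} separately to the couplings $g_{i,j}$ and to the auxiliary Gaussians $z_r$. The singular prefactors $1/(2\sqrt{t})$ and $-1/(2\sqrt{1-t})$ produced by the chain rule are exactly cancelled by the $\sqrt{t}$ and $\sqrt{1-t}$ in the corresponding covariances, yielding (this is essentially the content of Lemma \ref{L:cavitation}) a finite linear combination, with $n$-dependent coefficients, of terms
\begin{equation*}
\nu_t\bigl(f\, \varepsilon^k \varepsilon^{k'} (R_{k,k'}^- - q_2)\bigr),\qquad 1 \leq k < k' \leq n+2.
\end{equation*}
Since $|\varepsilon^k \varepsilon^{k'}| \leq 1$, Cauchy--Schwarz produces
\begin{equation*}
|\nu_t'(f)| \leq K(n)\, \nu_t(f^2)^{1/2}\, \nu_t\bigl((R_{1,2}^- - q_2)^2\bigr)^{1/2}.
\end{equation*}
The marginal of $\nu_t$ on the first $N-1$ sites is itself an SK Gibbs measure at the same temperature, so the high-temperature hypothesis \eqref{Eq:High-Temp} controls $\nu_t((R_{1,2}^- - q_2)^2)$ by $C/N$ uniformly in $t$. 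A parallel Gr\"onwall estimate, obtained by differentiating $\nu_t(f^2)$ in exactly the same fashion, gives $\nu_t(f^2) \leq C\nu(f^2)$ uniformly in $t$. Combining these bounds and integrating in $t$ yields \eqref{Cav-1}.

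The bound \eqref{Cav-2} is obtained by iterating the procedure once more: differentiating the explicit formula for $\nu_s'(f)$ in $s$ and again applying Gaussian integration by parts produces a sum of terms each carrying \emph{two} overlap factors $(R^-_{k,k'} - q_2)$. H\"older's inequality then consumes $\nu_s((R_{1,2}^- - q_2)^4)^{1/2} \leq C/N$, which follows from the sixth-moment estimate \eqref{Eq:High-Temp}. Since the double integration in $s$ and $t$ carries no $N$-dependent factor, the remainder of the Taylor expansion is bounded by $CN^{-1} \nu(f^2)^{1/2}$, proving \eqref{Cav-2}. The main technical obstacle throughout is the bookkeeping of the Gaussian calculus: after the second differentiation the indices $(k,k')$ arising from iterated integration by parts may involve either the original $n$ replicas or the two extra ``dummy'' replicas introduced by squaring, and tracking the correct combinatorial coefficients requires some care. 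Once those identities are in place, all the analytic content reduces to the single overlap concentration estimate \eqref{Eq:High-Temp}.
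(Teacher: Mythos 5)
The paper does not prove this statement---it cites Lemma~\ref{L:Cav-Bound} directly as Proposition~2.5.3 of \cite{Talagrand-book} and uses it as a black box. So there is no internal proof to compare against, only Talagrand's. That said, your reconstruction matches the structure of Talagrand's argument: Taylor-expand $\nu_t$ in $t$ via the explicit cavity differentiation formula, apply Cauchy--Schwarz to peel off the $(R^-_{k,k'}-q_2)$ factors, and close the bound by showing the overlap concentration and the $L^2$-norm of $f$ are comparable under $\nu_t$ and $\nu$.

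There is one genuine slip. You justify $\nu_t\bigl((R^-_{1,2}-q_2)^2\bigr)\le C/N$ by asserting that ``the marginal of $\nu_t$ on the first $N-1$ sites is itself an SK Gibbs measure at the same temperature.'' That is false for $0<t<1$: tracing out $\varepsilon$ from $\langle\cdot\rangle_t$ leaves a factor $\cosh\bigl(\beta(\sqrt{t}\,\ell_N^r+\sqrt{(1-t)q_2}\,z_r+h)\bigr)$ that depends on $\sigma^r_1,\dots,\sigma^r_{N-1}$ through the local field, so the marginal is a tilted SK measure, not a clean $(N-1)$-spin system. (It \emph{is} a clean decoupled system at $t=0$, which is the whole point of the cavity setup.) The correct route---and the one Talagrand uses---is exactly the Gr\"onwall argument you already invoke for $\nu_t(f^2)$: since $|\nu_t'(g)|\le C_n\,\nu_t(|g|)$ for any nonnegative $g$ on $n$ replicas (pulling out the uniformly bounded factor $|\varepsilon^k\varepsilon^{k'}(R^-_{k,k'}-q_2)|\le 2$), one gets $\nu_s(g)\le e^{C_n}\nu_t(g)$ for all $s,t\in[0,1]$. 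Applying this with $g=(R^-_{1,2}-q_2)^2$ (resp.\ $(R^-_{1,2}-q_2)^4$) transfers the hypothesis \eqref{Eq:High-Temp} at $t=1$ to a uniform bound over $t$; no structural claim about the marginal of $\nu_t$ is needed. With that repair, and noting that the fourth-moment bound $\nu\bigl((R^-_{1,2}-q_2)^4\bigr)\le C/N^2$ follows from the sixth-moment hypothesis by H\"older (not ``by itself''), your argument is sound and reproduces the standard proof.
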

Here $\nu_0'$ denotes the derivative of $\nu_t$ with repsect to $t$ evaluated at $t=0$.
Introducing the notation $[m]=\{1, \cdots, m\} \subset \mathbb N$,  a direct calculation produces an expression for $\nu_{0}'(f)$ (see p. 77 from \cite{Talagrand-book}):
\begin{multline}
\label{Cav-Method}
\nu_{0}'(f)=
\beta^2 \sum_{1\leq \ell< \ell' \leq   n} \nu_0\left(\varepsilon^{\ell}\varepsilon^{\ell'}\left(R^-_{\ell, \ell'}-q_2\right)f\right)\\
- \beta^2 \sum_{1 \leq \ell \leq n} \nu_0\left(\varepsilon^\ell \varepsilon^{n+1}\left(R^-_{\ell, n+1}-q_2\right)f \right)\\
+  \beta^2 \frac{n\left(n+1\right)}{2}\nu_0\left(\varepsilon^{n+1}\varepsilon^{n+2} \left(R^-_{n+1, n+2}-q_2\right)f\right).
\end{multline}

This expression motivates the following general estimate obtained from the cavity method.  \begin{lemma}
\label{L:cavitation}
Let $S$ be a fixed finite subset of $\N$ and $p$ be a fixed integer.  Let $\eta : [p] \rightarrow \N$ be an injective function.
Suppose that $n$ is large enough so that $\eta\left([p]\right) \subseteq [n]$.  Then if $f^{-}: \Sigma_{N-1}^n \rightarrow \mathbb R$, we have
\begin{multline*}
\nu\left(\left(\prod_{l=1}^p \varepsilon^{\eta\left(l\right)}- q_p\right)f^{-}\right)=
\beta^2 \left(q_{p-2}- q_pq_2\right)\sum_{1\leq \ell < \ell' \leq p} \nu\left(f^-\left(R_{\eta(\ell), \eta(\ell')}-q_2\right)\right)\\
+\beta^2 \left(q_p- q_p q_2\right)\sum_{\underset{1 \leq \ell \leq p}{ \ell' \in [n] \backslash \eta\left([p]\right)}} \nu\left(f^-\left(R_{\eta(\ell), \ell'}-q_2\right)\right)\\
+ \beta^2  \left(q_{p+2}-q_pq_2\right)\sum_{\underset{\{\ell, \ell'\} \subset [n] \backslash \eta([p])}{\ell \neq \ell'}} \nu\left(f^-\left(R_{\ell, \ell'}-q_2\right)\right)\\
- \beta^2 n \left(q_p-q_pq_2\right)\sum_{1 \leq \ell \leq p} \nu\left(f^-\left(R_{\eta(\ell), n+1}-q_2\right)\right)\\
- \beta^2 n \left(q_{p+2}-q_pq_2\right)\sum_{ \ell \in [n] \backslash \eta([p])} \nu\left(f^-\left(R_{\ell, n+1}-q_2\right)\right)\\
+  \beta^2 \frac{n\left(n+1\right)}{2} \left(q_{p+2}-q_p q_2\right)\nu\left(f^-\left(R_{n+1, n+2}-q_2\right)\right) + \nu\left((f^-)^2\right)^{\frac{1}{2}}O\left(\frac{1}{N}\right)
\end{multline*}
where $O\left(\frac{1}{N}\right)$ denotes an expression bounded by $CN^{-1}$.
\end{lemma}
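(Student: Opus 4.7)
The plan is to apply inequality \eqref{Cav-2} from Lemma \ref{L:Cav-Bound} to $g=\bigl(\prod_{l=1}^{p}\varepsilon^{\eta(l)}-q_p\bigr)f^{-}$, reducing the task to the explicit computation of $\nu_0(g)$ and $\nu_0'(g)$ modulo a remainder bounded by $C\nu(g^2)^{1/2}/N\leq C\nu((f^-)^2)^{1/2}/N$. The essential observation is that at $t=0$ the cavity Hamiltonian contains only the decoupled site-$N$ contributions $\sqrt{q_2}\,z_r\varepsilon^{r}$ (plus the external field), so under $\nu_0$ the last-site spins $\{\varepsilon^r\}$ detach from the spins at sites $i<N$ and, conditional on $\{z_r\}$, are independent across replicas. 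Since both $f^{-}$ and $R^-_{\ell,\ell'}$ depend only on sites $i<N$, every $\nu_0$-expectation appearing below factors as a pure last-site piece times a pure non-last-site piece. In particular, because $\eta$ is injective, $\nu_0\bigl(\prod_{l=1}^{p}\varepsilon^{\eta(l)}\bigr)=q_p$ exactly, and hence $\nu_0(g)=0$.

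I then substitute $g$ into formula \eqref{Cav-Method} and split each outer sum according to how its summation indices $\ell,\ell'$ intersect the fixed image $\eta([p])\subseteq[n]$. After using the above factorization, the last-site part of each resulting term reduces to $\nu_0\bigl(\varepsilon^{\ell}\varepsilon^{\ell'}\prod_{l}\varepsilon^{\eta(l)}\bigr)-q_p q_2$. The key identity is that, by $(\varepsilon^r)^2=1$ together with the injectivity of $\eta$, the first expectation equals $q_{p-2}$ when both $\ell,\ell'\in\eta([p])$ (two spins cancel inside the product), equals $q_{p}$ when exactly one of $\ell,\ell'$ lies in $\eta([p])$, and equals $q_{p+2}$ when neither lies there. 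The same case analysis handles the pieces of \eqref{Cav-Method} involving the extra indices $n+1,n+2$, since these automatically lie outside $\eta([p])$. Collecting all pieces and re-indexing sums over $\eta([p])$ via the bijection $l\mapsto \eta(l)$ reproduces precisely the six sums displayed in the statement.

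Finally I upgrade from $(\nu_0, R^-)$ to $(\nu, R)$ inside each of these six sums. The replacement $R^-_{\ell,\ell'}\mapsto R_{\ell,\ell'}$ is immediate from $|R-R^-|\leq 2/N$ uniformly. To replace $\nu_0$ by $\nu$, I apply \eqref{Cav-1} to $(R_{\ell,\ell'}-q_2)f^{-}$ and then use Cauchy--Schwarz together with the high-temperature bound \eqref{Eq:High-Temp}; by the power-mean inequality the latter yields $\nu((R-q_2)^4)^{1/4}=O(N^{-1/2})$, which combined with the $N^{-1/2}$ from \eqref{Cav-1} contributes only $O(N^{-1})\cdot\nu((f^-)^2)^{1/2}$ and is therefore absorbed into the remainder.

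The main obstacle is the careful case-splitting in the second step: one has to faithfully translate the combinatorics of \eqref{Cav-Method} into the three coefficients $q_{p-2}-q_pq_2$, $q_p-q_pq_2$, and $q_{p+2}-q_pq_2$ while tracking the correct multiplicities for pairs inside $\eta([p])$, for pairs straddling $\eta([p])$ and $[n]\setminus\eta([p])$, and for pairs involving the extended indices $n+1,n+2$. Once this bookkeeping is organized, everything else is either a direct application of \eqref{Cav-1}--\eqref{Cav-2} or routine arithmetic.
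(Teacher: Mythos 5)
Your proposal is correct and follows essentially the same route as the paper: invoke \eqref{Cav-2} for $g=\bigl(\prod_{l}\varepsilon^{\eta(l)}-q_p\bigr)f^-$, note $\nu_0(g)=0$ by the decoupling and injectivity, expand $\nu_0'(g)$ via \eqref{Cav-Method} with the case analysis $\nu_0\bigl(\varepsilon^{\ell}\varepsilon^{\ell'}\prod_l\varepsilon^{\eta(l)}\bigr)\in\{q_{p-2},q_p,q_{p+2}\}$ giving exactly the three coefficients, and finally upgrade $R^-\to R$ (cost $O(1/N)$) and $\nu_0\to\nu$ via \eqref{Cav-1} together with Cauchy--Schwarz and \eqref{Eq:High-Temp}. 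The paper's own proof is the same sequence of steps in the same order.
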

We relegate a proof of this statement to Section \ref{S:Proofs}.  It is a direct consequence of  \eqref{Eq:High-Temp}, \eqref{Cav-Method} and
Lemma \ref{L:Cav-Bound}.

The key points here are the expression of the righthand side in terms of $\nu$ and not $\nu_0$ and the fact that the error is of order $1/N$, which is important when considering the scaled overlaps $\RR_{\ell, \ell'}$.

Let $\VV^n =(\vec \sigma^1, \dotsc, \vec \sigma^n)$ and let
\begin{equation*}
\VV^n_i= \VV^n - ( \vec 0 , \dotsc, \vec \sigma^i, \dotsc, \vec 0).
\end{equation*}
Further, for each $F: \mathbb R^{Nn} \rightarrow \mathbb R$ let
$F^{(i)}(\VV) = F(\VV_i^n)$.
In the next result we denote $\nabla F(\VV)$ the gradient of $F$ evaluated at $\VV$ and let $\nabla ^2 F$ denote its Hessian.

We let $\vec \RR$ denote the ${n+2 \choose 2}$-tuple overlaps
\begin{equation*}
\vec \RR = \left(\RR_{1,2}, \cdots, \RR_{1,n+2}, \cdots,  \RR_{n+1, n+2}\right)
\end{equation*}
lexicographically ordered so that $(i, j) < (k, \ell)$ if $i< k$ or if $i=k$ and $j <\ell$.  If
$\mathbf 1 = (1, \dotsc, 1) \in \mathbb R^{{n+2  \choose 2}}$
let $\vec \sigma_i= (\vec \sigma^{r r'}_i)_{1\leq r< r' \leq n}: = (\sigma^r_i \sigma^{r'}_i)_{1\leq r< r' \leq n} - q_2 \bf 1$ denote the (lexicographically ordered) replica vector of spins evaluated on the $i$'th vertex.

Recall the definition of the matrix $\frak A$ from \eqref{Eq:Replica-Matrix}.
The following is a simple, but crucial corollary of the previous lemma.
\begin{corollary}
\label{C:overlap}
Suppose that the high temperature condition \eqref{Eq:High-Temp} holds.  Suppose that $F \in C^2(\mathbb R^{Nn})$.
Then entry-wise, we have the following estimate:
\begin{equation}
\label{Eq:overlap-a}
\nu\left(F(\VV^n)\left([Id - \beta^2 \frak A] \cdot \vec{\RR}\right) \right)= \frac{1}{\sqrt N} \sum_{i=1}^N\nu\left(\nabla F(\VV^n_i)\cdot \left(\VV^n - \VV^n_i\right)\vec \sigma_i \right)\\
+  \textrm{Rem $(F)$}
\end{equation}
where
\begin{multline*}
\textrm{Rem $(F)$}=
\frac{\beta^2}{N} \sum_{i=1}^N \nu\left(\left(F^{(i)} -   F\right) \frak A \cdot \vec \RR \right) \\+
 \frac{1}{\sqrt{N}} \sum_{i=1}^N \nu\left(\left(\VV^n - \VV^n_i\right) \cdot \left( \nabla^2 F (\VV_{0, i}) \cdot \left(\VV^n - \VV^n_i\right) \right)\vec \sigma_i \right)
\\
+ \left(\frac{1}{N} \sum_{i=1}^N \nu\left(\left(F^{(i)}\right)^2\right)^{\frac{1}{2}}\right) O\left(\frac{1}{\sqrt{N}}\right)
\end{multline*}
with $\VV_{0, i}=\lambda_0 \VV^n + (1-\lambda_0) \VV^n_i$ for some (random) $\lambda_0=\lambda_0(\VV^n, \VV^n_i)  \in [0,1]$.
\end{corollary}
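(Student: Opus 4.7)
The plan is to decompose each coordinate of $\vec\RR$ as
\[
\RR_{r,r'} = \frac{1}{\sqrt N}\sum_{i=1}^N \bigl(\sigma^r_i \sigma^{r'}_i - q_2\bigr),
\]
so that entrywise
\[
\nu\bigl(F(\VV^n)\RR_{r,r'}\bigr) = \frac{1}{\sqrt N}\sum_{i=1}^N \nu\bigl(F(\VV^n)(\sigma^r_i \sigma^{r'}_i - q_2)\bigr),
\]
and for each fixed $i$ to Taylor-expand $F$ around the site-$i$-truncated configuration $\VV^n_i$,
\[
F(\VV^n) = F(\VV^n_i) + \nabla F(\VV^n_i)\cdot(\VV^n-\VV^n_i) + \tfrac{1}{2}(\VV^n-\VV^n_i)^T \nabla^2 F(\VV_{0,i})(\VV^n-\VV^n_i).
\]
Inserting the linear term of this expansion into the previous display and summing over $i$ yields exactly the main right-hand side $\tfrac{1}{\sqrt N}\sum_i \nu(\nabla F(\VV^n_i)\cdot(\VV^n-\VV^n_i)\,\vec\sigma_i)$ (reading off the $(r,r')$-component), while the quadratic Taylor remainder produces the middle term of $\mathrm{Rem}(F)$.

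The zeroth-order contribution $\tfrac{1}{\sqrt N}\sum_i \nu(F^{(i)}(\sigma^r_i\sigma^{r'}_i - q_2))$, where $F^{(i)} := F(\VV^n_i)$, carries the essential content. By permutation symmetry of $\nu$ across spin sites, and since $F^{(i)}$ does not depend on the spins at site $i$, I may relabel site $i$ as site $N$ and apply Lemma~\ref{L:cavitation} with $p=2$, $\eta(1)=r$, $\eta(2)=r'$. This rewrites $\nu(F^{(i)}(\sigma^r_i\sigma^{r'}_i - q_2))$ as $\beta^2$ times a linear combination of terms $\nu(F^{(i)}(R_{k,k'} - q_2))$ over pairs $(k,k')$ in $\{1,\dots,n+2\}$, with coefficients depending on how $\{k,k'\}$ intersects $\{r,r'\}$ and on whether the indices lie in $[n]$ or equal one of the two cavity replicas $n+1, n+2$, plus an $O(N^{-1})\,\nu((F^{(i)})^2)^{1/2}$ error. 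The matrix $\frak A$ of \eqref{Eq:Replica-Matrix} is precisely designed to collect these coefficients into a single piecewise formula. Using the identity $\nu(F^{(i)}(R_{k,k'}-q_2)) = \tfrac{1}{\sqrt N}\nu(F^{(i)}\RR_{k,k'})$, writing $F^{(i)} = F - (F - F^{(i)})$, and averaging over $i$, I obtain $\beta^2\nu(F\,(\frak A\cdot\vec\RR)_{(r,r')})$ as the dominant output, plus the first term of $\mathrm{Rem}(F)$ (the $F^{(i)}-F$ correction) and an $L^2$-scale error matching its third line. Moving the $\beta^2\frak A\cdot\vec\RR$ piece to the left-hand side then produces the factor $[\mathrm{Id}-\beta^2\frak A]\cdot\vec\RR$ asserted in \eqref{Eq:overlap-a}.

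The main obstacle is the combinatorial case-matching in the second paragraph: one must verify that Lemma~\ref{L:cavitation} applied at $p=2$, including the sums over the ``free'' replica indices in $[n]\setminus\{r,r'\}$ and the two cavity replicas $n+1, n+2$, reproduces exactly the piecewise formula \eqref{Eq:Replica-Matrix}, with the combinatorial multiplicities $n$, $\tfrac{1}{2}n(n+1)$, and $\binom{n+2}{2}$ all accounted for whenever indices equal $n+1$ or $n+2$. The remainder bookkeeping --- applying Cauchy--Schwarz together with Lemma~\ref{L:Cav-Bound} to absorb all $L^2$-type contributions into the three displayed terms of $\mathrm{Rem}(F)$, and invoking the high-temperature hypothesis \eqref{Eq:High-Temp} to control residual $\nu((F^{(i)})^2)^{1/2}$ factors --- is routine once the matrix identification is in place.
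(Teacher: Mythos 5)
Your proposal matches the paper's own proof step for step: decompose $\RR_{r,r'}$ site by site, Taylor-expand $F$ to second order around $\VV^n_i$, apply Lemma~\ref{L:cavitation} (after relabeling site $i$ as the cavity site by permutation symmetry) to the zeroth-order piece to produce the $\beta^2\frak A\cdot\vec\RR$ contribution, use exchangeability of replicas outside the support of $F$ to collapse onto indices $\{1,\dots,n+2\}$, and finally write $F^{(i)}=F+(F^{(i)}-F)$ so that moving $\beta^2\frak A\cdot\vec\RR$ to the left gives $[\mathrm{Id}-\beta^2\frak A]\cdot\vec\RR$. The one small imprecision --- invoking Lemma~\ref{L:cavitation} literally with $\eta(1)=r,\eta(2)=r'$ even when $r'\in\{n+1,n+2\}$, which violates the hypothesis $\eta([p])\subseteq[n]$ --- is easily repaired by applying the lemma with the replica count enlarged to $n+2$ and then collapsing by exchangeability, exactly as the paper does (their $\tilde C_{r,r'}$ runs up to index $n+4$ before being folded back into $\frak A$), so this does not constitute a genuine gap.
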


\begin{remark}
There are two points worth making:
First, most functions $F$ we are interested in depend very weakly on any given site $i$; one should think in the simplest case of $F(\vec \sigma^1)= G\left(\MM^1_N\right)$.  The advantage of the above formulation, which we shall not pursue, is that one may consider functions like
$F\left(\vec \sigma\right): = f\left(\vec{u} \cdot \vec {\sigma}^1\right)$ for any $\vec u \in \mathbb S^{N-1}$ such that all coordinates of $\vec u$ are `small' in an appropriate sense.  Of course, $Rem (F)$ will be seen to be $O(\frac{1}{\sqrt N})$ in situations considered below.

Second, notice that the first term on the right hand side of \eqref{Eq:overlap-a} the argument $\VV_i^n$ does not depend on the $i$'th vertex, which means we may apply the first estimate from Lemma \ref{L:Cav-Bound} to obtain expressions in terms of $\nu(\nabla F(\VV^n_i))$.  The calculations then `close' by applying Taylor's theorem to express $\nu(\nabla F(\VV^n_i))$ as $\nu(\nabla F(\VV^n))$ plus some error.  This sets up the functional identities we are after.
\end{remark}

\section{The quenched average CLT for the Internal Energy}
\label{S:Annealed}
Throughout this section we work under the hypotheses of Theorem \ref{T:annealed}.
Assume for now that $f \in \FF$.
Let us begin with an elementary calculation using Gaussian integration by parts:
\begin{equation*}
\nu\left(\EE_N f \left(\EE_N\right)\right) = \frac{1}{2} \nu\left(f'\left(\EE_N\right)\right) -  \frac{\beta \sqrt{N}}{2}\nu\left(\left[R_{1,2}^2- q_2^2\right]f \left(\EE_N\right)\right) + O\left(\frac{\|f\|_{\FF}}{\sqrt{N}}\right).
\end{equation*}

It is now natural to write $R_{1,2}= R_{1,2}- q_2 + q_2$.  Expanding in the above expression we have
\begin{multline*}
- \frac{\beta \sqrt{N}}{2}\nu\left(\left[R_{1,2}^2-q_2^2\right]f \left(\EE_N\right)\right) = \\ \beta q_2 \nu\left(\RR_{1,2} f \left(\EE_N\right)\right) - \frac{\beta \sqrt{N}}{2} \nu\left(\left(R_{1,2}-q_2\right)^2 f \left(\EE_N\right)\right)
\end{multline*}
where $\RR_{1,2}$ denotes the centered, scaled overlap defined in Section \ref{S:notes}.
Because of the high temperature assumption \eqref{Eq:High-Temp}, we may bound the final summand by $\frac{C \beta\|f\|_\infty}{2 \sqrt{N}}$.
Hence the goal of our analysis will be to identify a formula (up to order \;$N^{- 1/2}$) for
\begin{equation*}
\beta q_2 \nu\left(\RR_{1,2} f \left(\EE_N\right)\right).
\end{equation*}

Though we do not appeal explicitly to Corollary \ref{C:overlap}, the next two lemmas essentially re-derive that result in explicit form.
Recall the constants $a, b, c$ defined above Theorem \ref{T:annealed}. We state our first lemma.
\begin{lemma}
\label{L:Point-One}
We have the identity
\begin{equation*}
\nu\left((1-\beta^2 a)\RR_{1,2}f (\EE_N)\right) =
-\beta^2 b\nu\left(\RR_{2,3}f (\EE_N)\right) +  \beta q_2 a\nu\left(f' (\EE_N)\right) \\+  O\left(\frac{\|f\|_{\FF}}{\sqrt{N}}\right).
\end{equation*}
\end{lemma}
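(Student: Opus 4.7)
The plan is to combine site-exchangeability, a cavity-style decomposition of $\EE_N$, and Gaussian integration by parts. First, by site-exchangeability under $\nu$ (the joint law of $(\sigma_i^1\sigma_i^2, f(\EE_N))$ is independent of $i$), we obtain the boundary representation
\[
\nu(\RR_{1,2}f(\EE_N)) \;=\; \sqrt{N}\,\nu\bigl((\varepsilon^1\varepsilon^2 - q_2)f(\EE_N)\bigr).
\]
Next, writing $\EE_N = \EE_N^- + \varepsilon^1\ell_N^1/\sqrt N$ where $\EE_N^-$ is independent of $\varepsilon^1$ and of $\{g_{iN}\}_{i<N}$, and using that $f\in\FF$ has Lipschitz derivative, a first-order Taylor expansion yields
\[
f(\EE_N) = f(\EE_N^-) + \frac{\varepsilon^1\ell_N^1}{\sqrt N}f'(\EE_N^-) + O\!\left(\mathrm{Lip}(f')\frac{(\ell_N^1)^2}{N}\right).
\]
After multiplication by the $\sqrt N$ prefactor, the quadratic remainder contributes only $O(\|f\|_\FF/\sqrt N)$.

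For the leading piece $\sqrt N\,\nu((\varepsilon^1\varepsilon^2-q_2)f(\EE_N^-))$, the function $f(\EE_N^-)$ depends only on the first $N-1$ spins of replica $1$, so Lemma \ref{L:cavitation} applies with $p=2$, $\eta=\{1,2\}$, and $n=2$; for this choice the two middle sums in that lemma vanish. Replica exchangeability (extending $\nu$ to four replicas, only replica $1$ being distinguished by $f^-$) collapses $\nu(f^-(R_{1,3}-q_2))=\nu(f^-(R_{1,2}-q_2))$ and $\nu(f^-(R_{3,4}-q_2))=\nu(f^-(R_{2,3}-q_2))$. Multiplying through by $\sqrt N$ and using the elementary identities
\[
(1-q_2^2) - 2q_2(1-q_2) = (1-q_2)^2 = a, \qquad -2q_2(1-q_2) + 3(q_4-q_2^2) = -b,
\]
yields the main terms $\beta^2 a\,\nu(\RR_{1,2}f(\EE_N^-)) - \beta^2 b\,\nu(\RR_{2,3}f(\EE_N^-))$ with error $O(\|f\|_\FF/\sqrt N)$. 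Replacing $f(\EE_N^-)$ by $f(\EE_N)$ under each scaled overlap costs only an extra $O(\|f\|_\FF/\sqrt N)$, because $\EE_N-\EE_N^-=O(1/\sqrt N)$ in $L^2$ and $\nu(\RR_{r,r'}^2)=O(1)$ by the high-temperature hypothesis \eqref{Eq:High-Temp}.

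For the correction piece, $(\varepsilon^1\varepsilon^2-q_2)\varepsilon^1 = \varepsilon^2 - q_2\varepsilon^1$, so I would evaluate $\nu((\varepsilon^2-q_2\varepsilon^1)\ell_N^1 f'(\EE_N^-))$ by Gaussian integration by parts in each $g_{iN}$, which is independent of $\EE_N^-$. Recognizing the resulting sums $\tfrac{1}{N}\sum_{i<N}\sigma_i^r\sigma_i^{r'}$ as overlaps $R_{r,r'}^-$ and using \eqref{Eq:High-Temp} to replace each overlap by $q_2$ at leading order gives
\[
\nu(\ell_N^1\varepsilon^2 f') = 2\beta q_2(1-q_2)\nu(f')+O(\|f\|_\FF/\sqrt N),
\]
\[
\nu(\ell_N^1\varepsilon^1 f') = \beta(1-q_2^2)\nu(f')+O(\|f\|_\FF/\sqrt N),
\]
and hence $\nu((\varepsilon^2-q_2\varepsilon^1)\ell_N^1 f') = \beta q_2[2(1-q_2)-(1-q_2^2)]\nu(f') = \beta q_2(1-q_2)^2\nu(f') = \beta q_2 a\,\nu(f')$. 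Finally, replacing $f'(\EE_N^-)$ by $f'(\EE_N)$ costs only $O(\|f\|_\FF/\sqrt N)$ via $\mathrm{Lip}(f')$. Substituting everything into the exchangeability identity above and rearranging yields the stated result. The main bookkeeping difficulty is aligning the $O(1/\sqrt N)$ errors arising from the cavity step, the Taylor residual, and the $\EE_N^-\to\EE_N$ replacements, and verifying that the fresh replica introduced by the integration by parts does not disturb the replica-exchangeability cancellations used in the cavity step.
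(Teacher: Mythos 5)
Your proposal is correct and follows essentially the same route as the paper: site-exchangeability to pass to the last coordinate, a Taylor expansion of $f(\EE_N)$ about $\EE_N^-$, an application of Lemma \ref{L:cavitation} with $p=n=2$ followed by replica exchangeability to collapse the surviving overlap terms into $\RR_{1,2}$ and $\RR_{2,3}$ with coefficients $\beta^2 a$ and $-\beta^2 b$, and Gaussian integration by parts on the local-field piece to produce $\beta q_2 a\,\nu(f')$. The only cosmetic difference is that you first rewrite $(\varepsilon^1\varepsilon^2-q_2)\varepsilon^1=\varepsilon^2-q_2\varepsilon^1$ before integrating by parts, whereas the paper integrates the original product directly; and with $p=n=2$ it is in fact the second, third, and fifth sums of Lemma \ref{L:cavitation} that vanish (the first, fourth, and sixth supply the four overlap terms), a slight miscount that does not affect your conclusion.
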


\textit{Proof.}
Using symmetry, we have
\begin{equation}
\nu\left(\RR_{1,2}f (\EE_N)\right) =  \nu\left(\sqrt{N} \left(\varepsilon^1 \varepsilon^2- q_2\right)f\left(\EE_N\right)\right).
\end{equation}

We decouple the argument of $f$ from the last spin variable by writing
\begin{equation*}
\EE_N= \EE_N^- + \frac{\varepsilon^1 \ell^1_N}{\sqrt N}.
\end{equation*}
An easy calculation shows that $\nu\left((\ell^1_N)^2\right) \leq C$ for some constant $C>0$ independent of $N$ thus we may use the Taylor expansion to linearize $f$ around $\EE_N^-$.
We obtain the expression
\begin{multline}
\label{Eq:Start-1}
\sqrt{N}  \nu\left(\left(\varepsilon^1 \varepsilon^2- q_2\right)f\left(\EE_N\right)\right)= \nu\left(\sqrt{N} \left(\varepsilon^1 \varepsilon^2- q_2\right) f\left(\EE_N^-\right)\right)\\+
  \nu\left( \left(\varepsilon^1 \varepsilon^2- q_2\right) \varepsilon^1 \ell_N f'\left(\EE_N^-\right)\right) + O\left(\frac{\|f\|_{\FF}}{\sqrt{N}}\right).
\end{multline}

Note that $\EE_N^-$ does not involve replicas of spins associated to the $N^{th}$ site, nor does it involve the disorder associated to the $N^{th}$ site so that the interpolation employed in the cavity method does not affect this quantity.  First we have, by Lemmas \ref{L:Cav-Bound} and \ref{L:cavitation},
\begin{multline*}
\nu\left(\sqrt{N} \left(\varepsilon^1 \varepsilon^2- q_2\right) f\left(\EE_N^-\right)\right) =
\beta^2\left(1-q^2_2\right) \nu\left(\RR_{1,2}f\left(\EE_N^-\right)\right) \\-
\beta^2 2\left(q_2-q^2_2\right)\nu\left(\RR_{1,3}f\left(\EE_N^-\right)\right) \\
- \beta^2 2\left(q_2-q^2_2\right)\nu\left(\RR_{2,3}f\left(\EE_N^-\right)\right)\\+
\beta^2 3\left(q_4-q^2_2\right)\nu\left(\RR_{3,4}f\left(\EE_N^-\right)\right) +O\left(\frac{\|f\|_{\FF}}{\sqrt{N}}\right).
\\
=\beta^2\left(1-q_2\right)^2 \nu\left(\RR_{1,2}f\left(\EE_N^-\right)\right)
-\beta^2 b  \nu\left(\RR_{2,3}f\left(\EE_N^-\right)\right)
+O\left(\frac{\|f\|_{\FF}}{\sqrt{N}}\right).
\end{multline*}
In the last step we used replica symmetry along with the fact that $\EE_N^-$ only depends on the first replica to combine summands.  Since $\nu\left(\RR_{1,2}^2\right) \leq C$ and $\nu\left(\ell_N^2\right) \leq C$, we apply the Taylor expansion and Lemmas \ref{L:Cav-Bound} and \ref{L:cavitation} again to obtain
\begin{multline}
\label{Eq:ID-T-1}
\nu\left(\sqrt{N} \left(\varepsilon^1 \varepsilon^2- q_2\right) f\left(\EE_N^-\right)\right)= \\\beta^2\left(1-q_2\right)^2 \nu\left(\RR_{1,2}f\left(\EE_N\right)\right)
-\beta^2 b  \nu\left(\RR_{2,3}f\left(\EE_N\right)\right)
+O\left(\frac{\|f\|_{\FF}}{\sqrt{N}}\right).
\end{multline}

For the remaining term from \eqref{Eq:Start-1} we have (using Gaussian integration by parts)
\begin{multline*}
\nu\left(\left(\varepsilon^1 \varepsilon^2- q_2\right)\varepsilon^1 \ell^1_N f'\left(\EE_N^-\right)\right)=
\frac{\beta}{N}\sum_{j=2}^N \nu\left(\left(\varepsilon^1 \varepsilon^2- q_2\right)\varepsilon^1\sigma_j^1\left(\sum_{k=1}^2 \varepsilon^k\sigma_j^k \right) f'\left(\EE_N^-\right)\right)\\
- 2\frac{\beta}{N}\sum_{j=2}^N \nu\left(\left(\varepsilon^1 \varepsilon^2- q_2\right) \varepsilon^1\sigma_j^1\varepsilon^3\sigma_j^3 f'\left(\EE_N^-\right)\right).
\end{multline*}
We apply Lemma \ref{L:Cav-Bound}, the Taylor expansion, and symmetry to obtain
\begin{multline*}
\nu\left(\left(\varepsilon^1 \varepsilon^2- q_2\right)\varepsilon^1 \ell^1_N f'\left(\EE_N^-\right)\right)=
\beta \left(1-q_2\right)^2 \nu\left(R_{1,2} f'\left(\EE_N\right)\right) +\|f\|_\FF \frac{C}{\sqrt{N}}
\\
=\beta q_2 \left(1-q_2\right)^2 \nu\left(f'\left(\EE_N\right)\right)+\|f\|_\FF \frac{C}{\sqrt{N}}.
\end{multline*}
The last line here follows from the assumption that we are in the high temperature region.

Combining the various terms, we have shown that
\begin{equation*}
\nu\left(\left(1- \beta^2 a\right)\RR_{1,2}f (\EE_N)\right) =
-\beta^2 b\nu\left(\RR_{2,3}f (\EE_N)\right) + \beta q_2 a\nu\left(f' (\EE_N)\right)  + \|f\|_\FF \frac{C}{\sqrt{N}}.
\end{equation*}
\qed

\hspace{10pt}

A nearly identical argument allows us to treat the term involving $R_{2,3}$:
\begin{lemma}
\label{L:Point-Two}
We have the estimate
\begin{multline*}
\nu\left(\RR_{2,3}f (\EE_N)\right) = \beta^2 b \nu \left(\RR_{1,2}f\left(\EE_N\right)\right)
 +  \beta^2 c \nu \left( \RR_{2,3}f\left(\EE_N\right)\right) \\ +
 \beta q_2 b \nu\left(f'\left(\EE_N\right)\right) +\|f\|_\FF \frac{C}{\sqrt{N}}.
\end{multline*}
\end{lemma}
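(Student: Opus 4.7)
The plan is to mirror the proof of Lemma \ref{L:Point-One} essentially step-for-step. The main structural difference is that the overlap $\RR_{2,3}$ involves replicas $2$ and $3$, while the functional argument $f(\EE_N)$ involves only replica $1$, so the replica indices in the cavity expansion rearrange (and the overall number of active replicas grows from $2$ to $3$). I would start by using site symmetry to rewrite
\begin{equation*}
\nu\bigl(\RR_{2,3}\, f(\EE_N)\bigr) = \sqrt{N}\,\nu\bigl((\varepsilon^2 \varepsilon^3 - q_2)\, f(\EE_N)\bigr),
\end{equation*}
then decouple the last spin via $\EE_N = \EE_N^- + \varepsilon^1 \ell^1_N / \sqrt{N}$ and Taylor-expand $f$ about $\EE_N^-$. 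This produces two pieces to analyze plus a remainder of order $\|f\|_\FF / \sqrt{N}$ (controlled exactly as in Lemma \ref{L:Point-One} via $\nu((\ell^1_N)^2) \leq C$).

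For the main cavity term $\sqrt{N}\,\nu((\varepsilon^2 \varepsilon^3 - q_2)\, f(\EE_N^-))$, I would apply Lemma \ref{L:cavitation} with $p=2$, $\eta(1)\eta(2)=(2,3)$, $n = 3$. Since $f(\EE_N^-)$ depends only on replica $1$, replica symmetry collapses all $\RR_{r,r'}$ with $\{r,r'\} \subset \{2,3,4,5\}$ into $\RR_{2,3}$, and those with exactly one index equal to $1$ into $\RR_{1,2}$. Collecting the six coefficients from Lemma \ref{L:cavitation}, the coefficient of $\nu(\RR_{1,2} f)$ should simplify to $2\beta^2(q_2 - q_2^2) - 3\beta^2(q_4 - q_2^2) = \beta^2 b$, and the coefficient of $\nu(\RR_{2,3} f)$ to $\beta^2(1 - q_2^2) - 6\beta^2(q_2 - q_2^2) + 6\beta^2(q_4 - q_2^2) = \beta^2 c$. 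A final application of Lemma \ref{L:Cav-Bound} lets me replace $f(\EE_N^-)$ by $f(\EE_N)$ at cost $O(\|f\|_\FF/\sqrt{N})$.

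For the derivative correction $\nu((\varepsilon^2 \varepsilon^3 - q_2)\, \varepsilon^1 \ell^1_N\, f'(\EE_N^-))$, I would apply Gaussian integration by parts in $\{g_{j,N}\}_{j<N}$; with $n=3$ active replicas the IBP produces three ``direct'' terms indexed by $r \in \{1,2,3\}$ and one cavity term weighted by $-n = -3$. Using $\tfrac{1}{N}\sum_{j<N} \sigma_j^1 \sigma_j^r = R_{1,r} + O(1/N)$ together with the high-temperature concentration $R_{1,r} \to q_2$, the nonzero contributions come from $\varepsilon^2 \sigma_j^2$, $\varepsilon^3 \sigma_j^3$, and $\varepsilon^4 \sigma_j^4$: these should evaluate to $\beta q_2^2(1-q_2)$, $\beta q_2^2(1-q_2)$, and $-3\beta q_2(q_4 - q_2^2)$ respectively, after using $\nu((\varepsilon^2 \varepsilon^3 - q_2)\varepsilon^1\varepsilon^2) = q_2(1-q_2)$, $\nu((\varepsilon^2\varepsilon^3 - q_2)\varepsilon^1\varepsilon^3) = q_2(1-q_2)$, and $\nu((\varepsilon^2\varepsilon^3 - q_2)\varepsilon^1\varepsilon^4) = q_4 - q_2^2$. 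Their sum simplifies to $\beta q_2 \,b\, \nu(f'(\EE_N))$.

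Combining these two steps and absorbing all $O(\|f\|_\FF/\sqrt{N})$ errors recovers the claimed identity. The only notable bookkeeping change from Lemma \ref{L:Point-One} is the mechanism by which the ``diagonal'' ($R_{1,1}$) term vanishes: here it dies immediately because $\varepsilon^1$ is disjoint from the pair $\{\varepsilon^2, \varepsilon^3\}$ so that $\nu(\varepsilon^2\varepsilon^3 - q_2) = 0$ at the required order, whereas in Lemma \ref{L:Point-One} the analogue produced a nonzero contribution that was cancelled algebraically. I do not anticipate any genuinely new analytic obstacle; the principal work is verifying that the bookkeeping of the six coefficients from Lemma \ref{L:cavitation} and the four cavity-IBP coefficients combine to recover precisely $b$ and $c$ in the right places.
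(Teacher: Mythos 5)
Your argument matches the paper's proof step for step: rewrite $\nu(\RR_{2,3}f(\EE_N))$ via site symmetry, Taylor expand around $\EE_N^-$, apply Lemma \ref{L:cavitation} with $p=2$, $\eta=(2,3)$, $n=3$ and collapse overlaps by replica symmetry to get the coefficients $\beta^2 b$ and $\beta^2 c$, and then handle the local-field correction with Gaussian integration by parts to recover $\beta q_2 b\,\nu(f'(\EE_N))$; all of your coefficient arithmetic checks out. One small correction to your closing remark: the $k=1$ diagonal piece of the local-field IBP vanishes by the identical mechanism in Lemma \ref{L:Point-One} as here, namely $(\varepsilon^1\sigma_j^1)^2=1$ reduces it to $\nu((\varepsilon^a\varepsilon^b-q_2)f'(\EE_N^-))$ which is $O(\|f\|_\FF/\sqrt N)$ since $\nu_0(\varepsilon^a\varepsilon^b - q_2)=0$ for distinct replicas, so there is no algebraic cancellation in Lemma \ref{L:Point-One} that is absent in this one.
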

The above pair of lemmata give us a nontrivial system of equations allowing the solution of $\nu\left(\RR_{1,2}f (\EE_N)\right)$ in terms of $\nu\left(f' (\EE_N)\right) $:
\begin{corollary}
We have
\begin{multline*}
\left ((1- \beta^2a)(1-\beta^2c)+ \beta^4 b^2 \right) \nu\left(\RR_{1,2} f (\EE_N)\right)=\\
\beta q_2 \left(a - \beta^2 (b^2 + ac) \right) \nu\left(f' (\EE_N)\right)
+ \|f\|_\FF \frac{C}{\sqrt{N}}.
\end{multline*}
\end{corollary}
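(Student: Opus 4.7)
\textit{Proof proposal.} The statement is essentially a two-by-two linear algebra elimination using the identities from Lemmas \ref{L:Point-One} and \ref{L:Point-Two}, which together yield an approximate linear system in the two unknowns $\nu(\RR_{1,2} f(\EE_N))$ and $\nu(\RR_{2,3} f(\EE_N))$, with forcing term proportional to $\nu(f'(\EE_N))$ and an $O(\|f\|_{\FF}/\sqrt{N})$ inhomogeneity.

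Concretely, I would first rewrite Lemma \ref{L:Point-Two} in the form
\begin{equation*}
(1-\beta^2 c)\,\nu\left(\RR_{2,3} f(\EE_N)\right) = \beta^2 b\,\nu\left(\RR_{1,2} f(\EE_N)\right) + \beta q_2 b\,\nu\left(f'(\EE_N)\right) + O\!\left(\tfrac{\|f\|_{\FF}}{\sqrt{N}}\right).
\end{equation*}
Under the high temperature hypothesis \eqref{Eq:High-Temp}, $\beta$ is small enough that $1 - \beta^2 c \neq 0$, so this equation expresses $\nu(\RR_{2,3} f(\EE_N))$ in terms of $\nu(\RR_{1,2} f(\EE_N))$ and $\nu(f'(\EE_N))$, again up to $O(\|f\|_{\FF}/\sqrt{N})$. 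Substituting the resulting expression for $\nu(\RR_{2,3} f(\EE_N))$ into the identity of Lemma \ref{L:Point-One} and multiplying through by $(1 - \beta^2 c)$ gives
\begin{equation*}
(1-\beta^2 a)(1-\beta^2 c)\,\nu\left(\RR_{1,2} f(\EE_N)\right) = -\beta^4 b^2\,\nu\left(\RR_{1,2} f(\EE_N)\right) + \beta q_2\left[a(1-\beta^2 c) - \beta^2 b^2\right]\nu\left(f'(\EE_N)\right) + O\!\left(\tfrac{\|f\|_{\FF}}{\sqrt{N}}\right).
\end{equation*}
Collecting the $\nu(\RR_{1,2} f(\EE_N))$ terms on the left and noting $a(1-\beta^2 c) - \beta^2 b^2 = a - \beta^2(ac + b^2)$ yields the claimed identity.

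There is no real obstacle to this step beyond bookkeeping, since both input lemmas already absorb all cavity-method errors into the $O(\|f\|_{\FF}/\sqrt{N})$ remainder and the coefficients $a, b, c$ are dimensionless constants depending only on $(\beta, h)$. The only mild point to watch is that each algebraic manipulation (multiplying by $1-\beta^2 c$, adding the identities) preserves the $O(\|f\|_{\FF}/\sqrt{N})$ order of the error, which it does because the multipliers are absolute constants in $N$. The invertibility of the $2\times 2$ system, which amounts to the non-vanishing of the prefactor $(1-\beta^2 a)(1-\beta^2 c) + \beta^4 b^2$, is automatic in the high temperature regime since for small $\beta$ the matrix is a small perturbation of the identity.
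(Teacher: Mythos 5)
Your proof is correct and is exactly the "straightforward manipulation using Lemmas \ref{L:Point-One} and \ref{L:Point-Two}" that the paper alludes to without spelling out; you simply fill in the elimination explicitly. One tiny stylistic remark: rather than solving Lemma \ref{L:Point-Two} for $\nu(\RR_{2,3}f(\EE_N))$ (which requires $1-\beta^2 c \neq 0$, true at high temperature but not logically needed), you could just multiply Lemma \ref{L:Point-One} by $(1-\beta^2 c)$ and substitute the rearranged Lemma \ref{L:Point-Two} directly, obtaining the displayed identity without any division; but this changes nothing substantive.
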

\textit{Proof.}
This is a straightforward manipulation using Lemmas \ref{L:Point-One} and \ref{L:Point-Two}.
\qed

\textit{Proof of Corollary \ref{C:WassAnn-1}.}
To translate Lemma \ref{L:Stein} to a Gaussian with variance $\sigma^2$, let $g(x)=\sigma g^*(\frac{x}{\sigma})$ and $f(x)= f^*(\sigma x)$, where $f^*$ solves the ordinary differential equation \eqref{eqOU} for $g^*$.  Then
\begin{equation}
\label{OUsigma}
\sigma^2{f}'(x) -x {f}'(x)= {g}(x) - \mathbb E\left[g(\sigma z)\right]
\end{equation}
where $z$ is a standard Gaussian variable.  Hence the bounds of Lemma \ref{L:Stein} become
\begin{equation}
\|f\|_{\infty} \leq L, \quad, \|{f}'\|_{\infty} \leq\sqrt{\frac {2}{ \pi}} \sigma L, \quad \|{f}''\|_{\infty} \leq2\sigma^2L
\end{equation}
where $L=Lip(g)= Lip(g^*)$.

Now let us fix $g: \mathbb R \rightarrow \mathbb R$ so that $Lip(g) \leq 1$ and consider
\begin{equation}
\label{A}
\left| \nu(g(\EE_N)) - \mathbb E\left[g(\sigma z)\right] \right|
\end{equation}
If $f$ solves \eqref{OUsigma}, then
\begin{equation}
\label{B}
\left| \nu(g(\EE_N)) - \mathbb E\left[g(\sigma z)\right] \right| = |\nu(\EE_N f(\EE_N)) - \sigma^2 \nu(f'(\EE_N))|.
\end{equation}
By Theorem \ref{T:annealed}, the right hand side is bounded by $C\|f\|_{\FF} N^{-\frac 12}$.  This estimate in turn is bounded by $CN^{-\frac 12}$ from the considerations of the previous paragraph.  Optimizing over $g$ gives the result. 
\qed

\section{The Quenched CLT for the Energy Density}
In this section we shall outline the more complex Theorem \ref{T:Main} and Corollary \ref{C:Quenched} which rely on elaborations of the ideas from the previous section.  We will postpone the proofs of some of the technical statements to Section \ref{S:Proofs}, in favor of giving a detailed sketch.  Most of the lemmata left unproven in this section are computationally intensive and obscure the main idea.

Our first goal is to derive approximate identities for the expressions on the left hand side of \eqref{eq:express}  with errors given in terms of the moments of $\|\XX^n\|_2^d$ under $\nu$ and $L^{\infty}$ norms of $F$ and its derivatives.
The first idea is to reduce their calculation to the calculation of the collection $\{\nu\left(\RR_{r, r'} G F \left(\XX^n\right)\right)\}_{r, r' \leq n+2}$.
Let $\vec{v}_i \in \mathbb R^{2n}$ be defined by
\begin{align*}
\vec{v}_1&:=\frac{1}{2} \left(1, q_2^2, \dotsc, q_2^2, 0, \dotsc, 0\right) \text{( $n$ zeros)}\\
\vec{v}_2&:=\beta q_2 \left( 0, \dotsc, 0, -q_1(1-q_2), q_1 + q_1q_2-2 q_3, \dotsc, q_1 + q_1q_2-2 q_3\right) \text{( $n$ zeros)}\\
\vec{v}_3&:=\frac{1}{2} \left(1, q_2^2, \dotsc, q_2^2, 0, \dotsc, 0\right) \text{( $n$ zeros)}.
\end{align*}
Next let $\pi$ denote the cyclic permutation $(1 \; 2 \dotsc n)$ and let $\pi \oplus \pi$ denote the product
of cycles $(1\;2 \dotsc n)(n + 1 \dotsc 2n)$. We let $\pi$ and $\pi \oplus \pi$ act naturally on the above
vectors by permuting coordinates, defining
\begin{align*}
\vec{e}^{\:i}&:= (\pi \oplus \pi)^{i-1} \cdot \vec{v}_1\\
\vec{m}^{i}&:= \left(\pi \oplus \pi \right)^{i-1} \left(\vec v_2 + \vec v_3\right)
\end{align*}

Further, let
$\vec{r^e}, \vec{r^m} \in \mathbb R^{{n+2 \choose 2}}$
\begin{equation*}
\vec{r}^e_{k, k'}:= \begin{cases}
1 \text{ if $k= 1, \; k' \leq  n$}\\
-n  \text{ if $k= 1, \; k' =  n+1$}\\
0 \text{ otherwise}
\end{cases}
\end{equation*}
and
\begin{equation*}
\vec{r}^m_{k, k'}:= \begin{cases}
q_1(1-q_2) \text{ if $k= 1, \; k' \leq  n$}\\
-nq_1(1-q_2)   \text{ if $k= 1, \; k' =  n+1$}\\
q_3- q_1 q_2 \text{ if $1< k \leq n, \: k' \leq n$}\\
-n(q_3- q_1 q_2) \text{ if $1< k \leq n, \: k' = n+1$}\\
{n+1 \choose 2} (q_3- q_1 q_2)  \text{ if $k = n+1, \: k' = n+2$}\\
0 \text{ otherwise.}
\end{cases}
\end{equation*}
Finally, we let $\vec \RR$ denote the ${n+2 \choose 2}$-tuple of lexicographically ordered overlaps
\begin{equation*}
\left(\RR_{1,2}, \cdots, \RR_{1,n+2}, \cdots,  \RR_{n+1, n+2}\right).
\end{equation*}

\begin{lemma}
\label{P:prep}
Suppose that the high temperature condition \eqref{Eq:High-Temp} holds.
Then we have the following identities:
\begin{equation}
\label{Eq:internal}
\nu(\EE^1 GF(\XX^n))= \nu\left(\vec {e^1} \cdot \nabla \left(GF\right)\right)+ \beta q_2 \nu\left(\vec {r^e} \cdot \vec \RR GF\right) + Er(F,G) \end{equation}
and
\begin{equation}
\label{Eq:magnet}
\nu\left(\MM^1 GF(\XX^n)\right)= \nu\left(\vec{m^1} \cdot \nabla \left(G F\right)\right)+ \beta^2 \nu\left(\vec{r^m} \cdot \vec \RR GF\right) + Er(F,G)
\end{equation}
\end{lemma}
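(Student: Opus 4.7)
The plan is to prove \eqref{Eq:internal} and \eqref{Eq:magnet} separately, both by adapting the machinery of Section \ref{S:Annealed} to the vector-valued multi-replica setting through a combination of the cavity method and Gaussian integration by parts.

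For \eqref{Eq:internal}, the approach is a direct vector-valued analogue of the opening calculation of Section \ref{S:Annealed}. I would apply Gaussian integration by parts to each Gaussian coupling $g_{ij}$ appearing in $\EE^1_N = \tfrac{1}{\sqrt N} E_N(\sigma^1) - \tfrac{\beta\sqrt N}{2}(1-q_2^2)$, keeping careful track of two sources of $g$-dependence: the Gibbs weight $e^{\beta H^n}$ (which yields the familiar replica factor $\tfrac{\beta}{\sqrt N}[\sum_{r=1}^n \sigma^r_i\sigma^r_j - n\sigma^{n+1}_i\sigma^{n+1}_j]$ against $GF$) and the vector $\XX^n$ itself, through the chain rule combined with $\partial_{g_{ij}}\EE^r_N = \sigma^r_i\sigma^r_j/N$. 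Summing over $i<j$ and using $\sum_{i<j}\sigma^1_i\sigma^1_j\sigma^r_i\sigma^r_j = \tfrac12(N^2 R_{1,r}^2 - N)$, the output organizes into a coefficient $R_{1,r}^2$ times $\partial_{\EE^r}(GF)$ and a coefficient $\beta\sqrt N(R_{1,r}^2 - q_2^2)$ times $GF$. Expanding $R_{1,r}^2 = q_2^2 + 2q_2(R_{1,r}-q_2) + (R_{1,r}-q_2)^2$, the squared remainder is absorbed into $Er(F,G)$ by \eqref{Eq:High-Temp}, the $q_2^2$-piece combines precisely with the $-\tfrac{\beta\sqrt N}{2}(1-q_2^2)$ normalization and the Kronecker self-terms to cancel, and what remains reproduces $\vec e^{\,1}\cdot\nabla(GF)$ together with $\beta q_2\,\vec{r^e}\cdot \vec \RR\,GF$.

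For \eqref{Eq:magnet}, since $\MM^1_N$ contains no Gaussian couplings directly, I would use site symmetry under $\nu$ to write $\nu(\MM^1_N GF(\XX^n)) = \sqrt N\,\nu((\varepsilon^1 - q_1)GF(\XX^n))$, and then Taylor expand $GF(\XX^n) = GF(\XX^{n,-}) + \nabla(GF)(\XX^{n,-})\cdot\Delta + \tfrac12 \Delta^\top\nabla^2(GF)\Delta + \cdots$, where $\Delta = \XX^n - \XX^{n,-}$ has entries $\varepsilon^r\ell^r_N/\sqrt N$ in the $\EE$-slots and of order $\varepsilon^r/\sqrt N$ in the $\MM$-slots, so $\|\Delta\| = O(1/\sqrt N)$ and the second-order remainder is controlled by the Lipschitz bound on $\nabla(GF)$ in the definition of $Er(F,G)$. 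The static piece $\sqrt N\,\nu((\varepsilon^1-q_1)GF(\XX^{n,-}))$ is treated using the second-order cavity bound \eqref{Cav-2}: the $\nu_0$ expectation vanishes by decoupling of the last spin from $\XX^{n,-}$, and $\sqrt N\,\nu_0'((\varepsilon^1-q_1)GF^-)$, expanded via \eqref{Cav-Method} and evaluated using the decoupled $\nu_0$-covariances of the $\varepsilon$'s combined with $\sqrt N(R^-_{\ell,\ell'}-q_2) = \RR_{\ell,\ell'} + O(1/\sqrt N)$ and Lemma \ref{L:Cav-Bound}, produces the $\beta^2\,\vec{r^m}\cdot \vec \RR\,GF$ contribution. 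The Taylor derivative pieces split into two types: the $\MM^r$-direction contribution $\sum_r \nu((\varepsilon^1-q_1)\varepsilon^r\partial_{\MM^r}(GF^-))$ is simplified by another application of \eqref{Cav-2}, and the $\EE^r$-direction contribution $\sum_r \nu((\varepsilon^1-q_1)\varepsilon^r\ell^r_N\partial_{\EE^r}(GF^-))$ requires a further Gaussian integration by parts on each local field $\ell^r_N$ (mirroring the treatment of $\ell^1_N$ in the proof of Lemma \ref{L:Point-One}), which converts $\ell^r_N$-spin products into overlap factors $R^-_{r,r'}$. After replacing $R^-_{r,r'}$ by its high-temperature value $q_2$ (with the deviation absorbed into $Er(F,G)$) and collecting by replica symmetry, the Taylor derivative pieces combine to yield the $\vec{m^1}\cdot\nabla(GF)$ term of the identity.

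The main obstacle is combinatorial rather than conceptual: each use of the cavity expansion, of Gaussian IBP, or of Lemma \ref{L:cavitation} generates sums over replica pairs with coefficients assembled from the $q_p$'s and the decoupled $\varepsilon$-covariances under $\nu_0$, and organizing these sums into the compact forms $\vec{e}^{\,1}, \vec{m^1}, \vec{r^e}, \vec{r^m}$ requires patient bookkeeping, including accounting for subtle cancellations between the static and Taylor-derivative pieces in the magnetization identity. The algebraic details are deferred to Section \ref{S:Proofs}; the assembly of residuals into $Er(F,G)$ is then routine, each $\nabla^k F$-factor being controlled by the norm bounds in the definition of $Er$ and each polynomial moment of $\XX^n$ by the factor $C_G\nu((1+\|\XX^n\|_2)^{2d})^{1/2}$.
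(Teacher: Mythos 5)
Your proposal matches the paper's own proof in both structure and content: \eqref{Eq:internal} is obtained by Gaussian integration by parts on the couplings $g_{ij}$, rewriting sums in terms of $R_{1,r}^2$, and expanding about $q_2$; \eqref{Eq:magnet} is obtained by replica symmetry to isolate the last spin, Taylor expansion in $\XX^n - \XX^{n,-}$, the second-order cavity bound and \eqref{Cav-Method} for the zeroth-order piece, and a further integration by parts on the local fields for the $\EE^r$-derivative pieces. The only small imprecisions are that the $\MM^r$-derivative terms need only \eqref{Cav-1} (as in Lemma \ref{L:Rep}) rather than \eqref{Cav-2}, and the $-N/2$ ``self-terms'' from $\sum_{i<j}\sigma^1_i\sigma^1_j\sigma^r_i\sigma^r_j$ are absorbed into $Er(F,G)$ rather than cancelled — but these do not affect the argument.
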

The proof of Lemma \ref{P:prep} is left to Section \ref{S:Proofs}.

This lemma suggests that we should study the interaction between the overlaps and the function $GF(\XX^n)$.  This is accomplished in the next lemma, which can be seen as the main point of the paper.
Namely, the behavior of the fluctuations of macroscopic quantities in the SK model are controlled by their interactions with the overlaps.  Moreover, these interactions are computable through the derivation of an implicit set of equations.  This should not be too surprising, however we believe that this technique provides a useful general tool for proving CLTs for the high temperature phase of mean field disordered systems and thus deserves special attention.

For $1 \leq k < k' \leq n+2'$, let $A(k, k', r), \: B(k, k',r)$ be defined by
\begin{equation*}
A(k,k',r):=\nu_0\left(\left(\varepsilon^{k} \varepsilon^{k'}- q_2\right) \varepsilon^{r}\right)
\end{equation*}
and
\begin{equation*}
B(k,k',r):=
\begin{cases}
\beta q_2 b & \text{ if $k , k' \neq r$}\\
\beta a & \text{ if $k=r$ or  $k' = r$}\\
\end{cases}
\end{equation*}
otherwise.
For each pair $\{k, k'\}$, we denote by $\vec{w}_{k, k'}\in \mathbb R^{2n}$ the vector
\begin{equation*}
\vec{w}_{k, k'} :=\left(B(k,k', 1), \dotsc,  B(k,k', n),A(k, k',1), \dotsc, A(k, k',n),\right).
\end{equation*}
Let $\DD$ denote the \textit{vector} of first order differential operators $\DD:= (\vec{w}_{k,k'} \cdot \nabla)_{1 \leq k<k' \leq n+2}$.

Recall the definition of the matrix $\frak A$ from Section \ref{S:Results}.
\begin{lemma}
\label{L:overlap}
Suppose that the high temperature condition \eqref{Eq:High-Temp} holds.
Then entry-wise, we have the following identity:
\begin{equation}
\label{Eq:overlap-A}
\nu\left( GF(\XX^n) \left(Id - \beta^2 \frak A\right) \cdot \vec{\RR}\right) = \nu(\DD (GF)(\XX^n))+ Er(F,G).
\end{equation}
\end{lemma}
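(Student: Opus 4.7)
\textit{Proof plan for Lemma \ref{L:overlap}.}

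The plan is to apply Corollary \ref{C:overlap} directly to the function $\widetilde F(\VV^n):=G(\XX^n(\VV^n))\,F(\XX^n(\VV^n))$, viewing the macroscopic vector $\XX^n$ as a function of the spin configuration. Corollary \ref{C:overlap} then produces, entry-wise in $(k,k')$,
\begin{equation*}
\nu\!\left(\widetilde F\,\bigl([Id-\beta^2\frak A]\vec\RR\bigr)_{k,k'}\right)
=\frac{1}{\sqrt N}\sum_{i=1}^N\nu\!\left((\sigma^k_i\sigma^{k'}_i-q_2)\,\nabla\widetilde F(\VV^n_i)\cdot(\VV^n-\VV^n_i)\right)+\text{Rem}(\widetilde F)_{k,k'}.
\end{equation*}
The left-hand side is already the one appearing in the lemma, so the whole job is to identify the main term on the right with $\nu\!\left((\vec w_{k,k'}\cdot\nabla)(GF)(\XX^n)\right)$ modulo $Er(F,G)$, and to absorb $\text{Rem}(\widetilde F)_{k,k'}$ into $Er(F,G)$.

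Since $\VV^n-\VV^n_i$ is supported at the single site $i$, the chain rule gives
\begin{equation*}
\nabla\widetilde F(\VV^n_i)\cdot(\VV^n-\VV^n_i)
=\sum_{r=1}^n\sigma^r_i\Bigl(\partial_{\sigma^r_i}\EE^r\cdot[\partial_{\EE^r}(GF)](\XX^n_i)+\partial_{\sigma^r_i}\MM^r\cdot[\partial_{\MM^r}(GF)](\XX^n_i)\Bigr).
\end{equation*}
For the \emph{magnetization branch} the derivative $\partial_{\sigma^r_i}\MM^r=1/\sqrt N$ is a constant, so the sum over $i$ reduces to $\frac{1}{N}\sum_i(\sigma^k_i\sigma^{k'}_i-q_2)\sigma^r_i$ multiplied by $[\partial_{\MM^r}(GF)](\XX^n_i)$; applying Lemma \ref{L:Cav-Bound} to replace the single-site average by a $\nu_0$-expectation and Taylor-expanding $\XX^n_i\to\XX^n$ identifies the coefficient as $\nu_0((\varepsilon^k\varepsilon^{k'}-q_2)\varepsilon^r)=A(k,k',r)$ up to an $Er(F,G)$ remainder (the polynomial-in-$\|\XX^n\|_2$ growth of $G$ and its derivatives produces exactly the error structure required by the definition of $Er$).

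For the \emph{energy branch} the derivative $\partial_{\sigma^r_i}\EE^r=\tfrac1N\sum_{j\neq i}g_{ij}\sigma^r_j$ is linear in the disorder, and Gaussian integration by parts applied to each $g_{ij}$ produces (i) quadratic spin-products at sites $(i,j)$, which drive the cavity expansion exactly as in Lemmas \ref{L:Point-One}--\ref{L:Point-Two}, and (ii) genuine derivatives in the $\XX^n$ direction coming from $\partial_{g_{ij}}(GF)$. After symmetrizing over replicas, invoking Lemma \ref{L:Cav-Bound} to pass from $\nu$ to $\nu_0$, and collecting by the number of shared indices between $\{k,k'\}$ and $r$, the net coefficient of $[\partial_{\EE^r}(GF)](\XX^n)$ assembles to $\beta(1-q_2)^2=\beta a$ when $r\in\{k,k'\}$ and to $\beta q_2 b$ otherwise, matching the definition of $B(k,k',r)$. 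Combining with the magnetization branch gives precisely $(\vec w_{k,k'}\cdot\nabla)(GF)(\XX^n)$.

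It remains to absorb the three pieces of $\text{Rem}(\widetilde F)_{k,k'}$ from Corollary \ref{C:overlap}. The quantities $\widetilde F^{(i)}-\widetilde F$ and $\nabla^2\widetilde F(\VV_{0,i})\cdot(\VV^n-\VV^n_i)^{\otimes 2}$ involve a single spin flip at site $i$, so each is $O(N^{-1/2})$ in the $\XX^n$ direction; combined with the $\|F\|_\infty$, $\|\nabla F\|_\infty$, $\mathrm{Lip}(\nabla F)$ and polynomial growth controls built into $Er(F,G)$, these terms sum up to an expression of the required $Er(F,G)$ form. The main obstacle is \emph{not} any single step but rather the coefficient bookkeeping in the energy branch: Gaussian integration by parts followed by the cavity expansion spawns a combinatorial number of terms indexed by which replicas among $\{1,\dots,n+2\}$ share a site with $r$, and one must verify that—after using the explicit form of $\frak A$ on the left—they organize themselves into the simple two-case formula defining $B(k,k',r)$. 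This is the step deferred to Section \ref{S:Proofs}.
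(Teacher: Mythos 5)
Your plan matches the paper's proof step for step: apply Corollary~\ref{C:overlap} to the function $K(\VV):=GF(\XX^n)$, compute $\partial_{\sigma_i^r}K$ by the chain rule (splitting into a local-field/energy piece and a constant magnetization piece), and then invoke Lemmas~\ref{L:Rep} and~\ref{L:Field-Loc} to turn each branch into $A(k,k',r)$ or $B(k,k',r)$ times $\nu(\partial_{x_l}(GF)(\XX^n))$, finally absorbing $\mathrm{Rem}(K)$ into $Er(F,G)$ via Taylor and H\"older. Your final coefficient identification $B(k,k',r)=\beta a$ for $r\in\{k,k'\}$ and $\beta q_2 b$ otherwise is exactly what Lemma~\ref{L:Field-Loc} delivers.

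One small inaccuracy in your narration of the energy branch: you say Gaussian integration by parts produces, besides the quadratic spin products (i), ``genuine derivatives in the $\XX^n$ direction coming from $\partial_{g_{ij}}(GF)$.'' But in your own chain-rule formula the gradient of $GF$ is evaluated at $\XX^n_i$, which has site $i$ zeroed out and therefore carries no $g_{ij}$ dependence at all; so $\partial_{g_{ij}}\bigl[(GF)(\XX^n_i)\bigr]=0$ and there is no contribution of type (ii). The integration by parts acts only on the Gibbs weights, yielding exactly the quadratic spin products in (i); the factor $\partial_{x_l}(GF)$ is already present from the chain rule and rides along passively until Taylor's theorem replaces $\XX^{n,-}$ (equivalently $\XX^n_i$) by $\XX^n$. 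This does not affect your final coefficients, which are right, but the sentence misattributes where the $\partial_{\EE^r}(GF)$ term comes from.
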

The proof of Lemma \ref{L:overlap} is left to Section \ref{S:Proofs}.

The question immediately arises as to when the matrix $Id-\beta^2 \frak A$ is invertible.  The condition of invertibility, combined with Lemmas \ref{P:prep} and \ref{L:overlap} allows us to obtain Theorem \ref{T:Main}:

\vspace{10pt}
\noindent\textit{Proof of Theorem \ref{T:Main}}
From Lemma \ref{L:overlap}, we have
\begin{equation*}
\nu(\vec{\RR} GF(\XX^n)) = \left(Id - \beta^2 \frak A \right)^{-1} \cdot \nu(\DD (GF)(\XX^n))+ Er(F,G).
\end{equation*}
Plugging this identity into the righthand side of equations \eqref{Eq:internal} and \eqref{Eq:magnet} we obtain the \text{existence} of a pair of vectors $\frak e, \frak m \in \mathbb R^{2n}$ so that
\begin{equation*}
\nu(\EE^1 GF(\XX^n))= \frak e \cdot \nu\left(\nabla \left(GF\right)\right)+ Er(F,G)
\end{equation*}
and
\begin{equation*}
\nu\left(\MM^1 GF(\XX^n)\right)= \frak m \cdot \nu\left(\nabla \left(G F\right)\right) + Er(F,G).
\end{equation*}

Now from the symmetry of replicas under $\nu$, we have
\begin{equation*}
\nu(\EE^i GF(\XX^n))= \frak e^i \cdot \nu\left(\nabla \left(GF\right)\right)+ Er(F,G)
\end{equation*}
and
\begin{equation*}
\nu\left(\MM^i GF(\XX^n)\right)= \frak m^i \cdot \nu\left(\nabla \left(G F\right)\right) + Er(F,G).
\end{equation*}
where
\begin{align*}
\frak e^i&:=
\pi^{i-1}\frak e\\
\frak m^i&:=
\pi^{i-1}\frak m.
\end{align*}
The covariance operator claimed in the theorem is then given by
\begin{equation*}
\frak C_{i, j}:=
\begin{cases}
\frak {e}^i_j \text{ if $i \leq n$}\\
 \frak {m}^{i-n}_j  \text{ if $i \geq n+1$}.
\end{cases}
\end{equation*}

By choosing $G$ to be one of the coordinate functions and $F$ to be the constant function $1$, the above equations allow us to identify $\frak C$ with the limiting covariance of $\XX^n_N$ under $\nu$.
\qed

\begin{proof}[Proof of Corollary \ref{C:Quenched}]
Let
\begin{equation*}
\sigma^2_N= \nu \left(\left(\HH_N-\langle\HH_N \rangle\right)^2\right)= \nu\left( \left(\HH^1_N-\HH^2_N\right)  \left(\HH^1_N-\HH^3_N \right) \right).
\end{equation*}
Now from Theorem \ref{T:Main}, $\sigma^2_N$ converges to a limit, which we denote by $\sigma_Q^2$, with a rate bounded by $CN^{-1/2}$.

We claim that
\begin{equation}
\label{eq:Ham}
\mathbb E \left[\left(
\left \langle \left(\HH_N-\langle\HH_N \rangle\right)^2 \right \rangle-\sigma_Q^2\right)^2 \right]  \leq \frac{C}{\sqrt{N}}.
\end{equation}
Indeed, using replicas
\begin{multline}
\label{eq:Ham-1}
\mathbb E \left[\left(
\left \langle \left(\HH_N-\langle\HH_N \rangle\right)^2 \right \rangle-\sigma_N^2\right)^2 \right]\\= \nu \left[\left( \left(\HH^1_N-\HH^2_N\right)\left(\HH^1_N-\HH^3_N\right)-\sigma_N^2\right)\left( \left(\HH^4_N-\HH^5_N\right)\left(\HH^4_N-\HH^6_N\right)-\sigma_N^2\right) \right].
\end{multline}
Now, under the quenched average measure $\nu$ the collection $\{\HH^\ell_N\}_{\ell=1} ^6$ are exchangeable and therefore Theorem \ref{T:Main} allows us to approximate the expression by the corresponding moment of a collection $\{h_\ell\}_{\ell=1}^6$ of exchangeable Gaussians, up to an error of order $\nu((\HH^1_N)^6)N^{-\frac{1}{2}}$ where  $\{h_{\ell}\}_{\ell=1}^6$ all have mean $0$, variance $\nu((\HH^1_N)^2)$ and the covariance between pairs is identically $\nu(\HH^1_N \HH^2_N)$.  
Also, a straightforward calculation with Gaussian integration by parts shows that under \eqref{Eq:High-Temp}, $\nu((\HH^1_N)^6) \leq C$ so that in fact this error is bounded by $CN^{-1/2}$.
Finally, note that \eqref{eq:Ham-1} vanishes if we insert $\{h_{\ell}\}_{\ell=1}^6$  in place of $\{\HH^\ell_N\}_{\ell=1} ^6$ because of the exchangeability, which proves the claim.
Since we observed in the previous paragraph that $|\sigma_N^2 - \sigma^2_Q| \leq \frac{C}{\sqrt{N}}$ the first assertion of the corollary follows.

We next show
\begin{equation*}
\mathbb E\left[\langle \HH_Nf\left(\HH_N\right)- \sigma_Q^2 f' \left(\HH_N\right) - \langle\HH_N\rangle f(\HH_N) \rangle^2 \right]=  \|f\|_\FF \; \nu\left((\HH_N)^4\right)^{1/2} O(N^{-1/2}).
\end{equation*}
Using replicas once again, we have
\begin{multline*}
\mathbb E\left[\langle \HH_Nf\left(\HH_N\right)- \sigma_Q^2 f' (\HH_N) - \langle\HH_N\rangle f(\HH_N) \rangle^2 \right] = \\\nu\left( \left\{(\HH^1_N-\HH_N^2)f(\HH^1_N)- \sigma_Q^2 f' (\HH^1_N)\right\}\left\{ (\HH^3_N- \HH^4_N)f(\HH^3_N)- \sigma_Q^2 f' (\HH^3_N) \right\}\right).
\end{multline*}
Via symmetry,
\begin{multline*}
\nu\left( \left\{(\HH^1_N-\HH_N^2)f\left(\HH^1_N\right)- \sigma_Q^2 f' (\HH^1_N)\right\}\left\{ (\HH^3_N- \HH^4_N)f(\HH^3_N)- \sigma_Q^2 f' (\HH^3_N) \right\}\right)=\\
\nu\left( (\HH^1_N-\HH_N^2)f(\HH^1_N)(\HH^3_N- \HH^4_N)f(\HH^3_N)\right)+ \sigma_Q^4 \nu\left( f' (\HH^1_N) f' (\HH^3_N)\right)\\
- 2 \sigma_Q^2 \nu\left( (\HH^1_N-\HH_N^2)f(\HH^1_N)f'(\HH^3_N)\right).
\end{multline*}
Let us apply Theorem \ref{T:Main} to the first term on the right hand side.  Letting $G(x)=x$ and $F(x, y)= f(x)f(y)$, we have $|\sigma_N^2- \sigma_Q^2|\leq CN^{-1/2}$ and
\begin{multline*}
\nu\left((\HH^1_N-\HH_N^2)f(\HH^1_N) (\HH^3_N- \HH^4_N) f(\HH^3_N)\right)=\\
\sigma_Q^2 \nu\left((\HH^1_N-\HH_N^2)f(\HH^1_N)f'(\HH^3_N)\right) + Er(x,  F).
\end{multline*}

This leaves us with
\begin{multline*}
\mathbb E \left[\langle \HH_Nf(\HH_N)- \sigma_Q^2 f' (\HH_N) - \langle\HH_N\rangle f(\HH_N) \rangle^2 \right] = \\
\sigma_Q^2 \left\{\sigma_Q^2 \nu\left( f' (\HH^1_N) f' (\HH^3_N)\right)- \nu\left((\HH^1_N-\HH_N^2)f(\HH^1_N)f'(\HH^3_N)\right) \right\}\\ + Er(x,  F)
\end{multline*}
Iterating the previous argument yields
\begin{equation*}
\mathbb E \left[\langle \HH_Nf\left(\HH_N\right)- \sigma_Q^2 f' \left(\HH_N\right) - \langle\HH_N\rangle f(\HH_N) \rangle^2 \right] = \\
Er(x,  F) + Er(1, \nabla F).
\end{equation*}
\end{proof}


\section{Proof of Theorem \ref{T:Sum}}

\textit{Proof of Theorem \ref{T:Sum} \eqref{I:1}.}
We apply Theorem \ref{T:Main} in the case $n=1$ and for functions
\begin{equation*}
F(\EE_N, \MM_N):= f(\EE_N + \MM_N).
\end{equation*}
With $w = (1, 1)$  this gives
\begin{equation*}
\nu\left(\HH_N f(\HH_N)\right) = (w, \frak C w) \nu\left(f' (\HH_N)\right) + C\|f\|_\FF N^{-1/2}
\end{equation*}
where $\frak C$ is the limiting quenched average covariance matrix for $\MM_N, \EE_N$.
We are thus in a position to directly apply Stein's Method.
\qed
\vspace{10pt}

\textit{Proof of Theorem \ref{T:Sum} \eqref{I:2}.}
This statement is a consequence of a more general principle, which may be of independent interest.  

Let us consider the following abstract setup: we are given a sequence of random variables $(X_N, \mu_N, \sigma_N)_{N\ge 1}$ on a probability space $(P, \Omega, \FF)$ and let $(\mf_N)_{N\ge 1}$ be a sequence of sub-sigma fields of $\FF$ such that for each $N$, $\mu_N$ and $\sigma_N$ are measurable with respect to $\mf_N$ (in particular, we do not assume $\mu_N$, $\sigma_N$ are deterministic. Let $\ee^N$ denote the conditional expectation given $\mf^N$.

For any $f\in C^k(\rr)$, let
\[
|f|_k := \|f\|_\infty + \|f^{(1)}\|_\infty + \cdots + \|f^{(k)}\|_\infty,
\]
where $f^{(m)}$ denotes the $m$th derivative of $f$.
\begin{lmm}
\label{L:Sour}
Suppose that for some fixed integer $k$ and positive real numbers $\alpha$ and $L$, we have that for each $N$ and each $f\in C^k(\rr)$,
\[
\ee\bigl(\ee^N(\sigma_N^2f'(X_N) - (X_N -\mu_N)f(X_N))\bigr)^2 \le L|f|_k N^{-\alpha}.
\]
Suppose that for each $\theta\ge 0$, $\ee(e^{\theta \sigma_N^2})$ is uniformly bounded over $N$. Let $\nu_N$ be the conditional law of $X_N$ given $\mf_N$, and $\gamma_N$ be the Gaussian measure with mean $\mu_N$ and variance $\sigma_N^2$. Then the random  signed measure $\nu_N - \gamma_N$ converges to the zero measure in probability as a random sequence on  the space of finite signed measures with the metric of weak* convergence.
\end{lmm}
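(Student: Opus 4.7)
The key difficulty is that Stein's method usually requires a deterministic test function $f$, whereas the Stein solution for the Gaussian $N(\mu_N, \sigma_N^2)$ is itself a random function of the random parameters $(\mu_N, \sigma_N)$. My plan is to sidestep this by applying the hypothesis only to the \emph{deterministic} functions $f_t(x) = \cos(tx)$ and $f_t(x) = \sin(tx)$, which have $|f_t|_k = O((1+|t|)^k)$, and to absorb all of the randomness of $(\mu_N, \sigma_N)$ into the coefficients of a first-order linear ODE for the conditional characteristic function.

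Set $\phi_N(t) := \ee^N[e^{itX_N}]$. Applying the hypothesis to the real and imaginary parts of $e^{itx}$ produces
\[
U_N(t) := it\sigma_N^2 \phi_N(t) + i\phi_N'(t) + \mu_N \phi_N(t), \qquad \ee|U_N(t)|^2 \le L(1+|t|)^{2k}N^{-\alpha}.
\]
A direct computation shows that the Gaussian characteristic function $\phi_{\gamma_N}(t) = e^{i\mu_N t - \sigma_N^2 t^2/2}$ makes $U_N$ vanish identically, so $\eta_N := \phi_N - \phi_{\gamma_N}$ satisfies the linear equation $\eta_N' - (i\mu_N - \sigma_N^2 t)\eta_N = -iU_N$ with $\eta_N(0) = 0$. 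Solving by an integrating factor and exploiting the fact that $|e^{\sigma_N^2 (s^2 - t^2)/2}| \le 1$ for $|s|\le |t|$ (the crucial favorable sign coming from the Gaussian variance in the exponent) yields the pathwise bound $|\eta_N(t)| \le \int_0^{|t|} |U_N(s)|\, ds$, after which Cauchy-Schwarz and Fubini produce $\ee|\eta_N(t)|^2 \le C|t|^{2k+2} N^{-\alpha}$.

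The weak-$*$ topology on the total-variation ball of signed measures on $\rr$ is metrizable by testing against a countable dense subset $\{g_j\}$ of $C_0(\rr)$, which can be taken to consist of Schwartz functions. For any such $g$, Fourier inversion gives
\[
\int g \, d(\nu_N - \gamma_N) = \frac{1}{2\pi} \int \hat g(t) \eta_N(t)\, dt.
\]
Since $|\hat g(t)|$ decays faster than any polynomial while $\ee|\eta_N(t)|$ grows only polynomially in $|t|$, pulling the expectation inside and using the bound from the previous paragraph yields $\ee|\int g\, d(\nu_N - \gamma_N)| \le C_g N^{-\alpha/2}$. Summing over the family $\{g_j\}$ with geometrically decreasing weights and invoking dominated convergence produces $L^1$, hence in-probability, convergence of the weak-$*$ distance between $\nu_N - \gamma_N$ and the zero measure to $0$.

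The main obstacle is conceptual rather than computational: transferring the hypothesis, which is phrased for deterministic test functions, into a statement that controls expectations against test functions (the Stein solutions) that inevitably depend on the random parameters $(\mu_N, \sigma_N)$. The characteristic-function approach circumvents this by packaging all randomness into the ODE coefficients, a route not immediately available through the standard Stein route. The exponential moment assumption on $\sigma_N^2$ enters only mildly, ensuring that $\gamma_N$ is almost surely a bona fide probability measure and uniformly tight in probability.
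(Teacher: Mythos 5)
Your proof is correct and follows the same basic strategy as the paper: both sidestep the problem that the Stein solution depends on the random parameters $(\mu_N,\sigma_N)$ by testing the hypothesis only against the deterministic complex exponentials $e^{itx}$, reducing the matter to a first-order linear ODE for the conditional characteristic function. Where you diverge is in how that ODE is controlled. The paper passes to $|\phi_N - \psi_N|$ and applies Gronwall's inequality, which discards the sign of the drift $-\sigma_N^2 t$ and produces a factor $e^{\sigma_N^2 t^2}$ in the estimate; controlling the expectation of that factor is precisely where the exponential-moment hypothesis on $\sigma_N^2$ is consumed (via Cauchy--Schwarz). You instead solve by integrating factor and use the favorable sign of the Gaussian term, $e^{\sigma_N^2(s^2-t^2)/2}\le 1$ for $|s|\le|t|$, which yields the pathwise bound $|\eta_N(t)|\le\int_0^{|t|}|U_N(s)|\,ds$ with no exponential factor. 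This is strictly tighter, and as you observe it makes the exponential moment assumption essentially superfluous for the conclusion. You also replace the paper's terse appeal to ``standard subsequence-of-subsequence arguments'' with a concrete metrization of weak-$*$ convergence via Fourier inversion against a countable weighted family of Schwartz test functions, which is more explicit. One minor slip in the bookkeeping: since $|\cos(t\,\cdot)|_k$ and $|\sin(t\,\cdot)|_k$ are bounded by $(k+1)(1+|t|)^k$, the correct bound is $\ee|U_N(t)|^2 \lesssim (1+|t|)^{k}N^{-\alpha}$ (exponent $k$, not $2k$), but this only improves your estimate and has no effect on the argument, which uses nothing beyond polynomial growth in $|t|$.
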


Nothing like the full power of Lemma \ref{L:Sour} is required here.  In fact, we know by \eqref{eq:Ham} that we may take
\begin{equation*}
\sigma^2_N \equiv \sigma^2_Q := \lim_{K \rightarrow \infty} \mathbb E \left[ \langle \left(\HH_K-\langle\HH_K \rangle\right)^2 \rangle \right].
\end{equation*}
Thus the second statement of Theorem \ref{T:Sum} follows immediately Lemma \ref{L:Sour} combined with Corollary \ref{C:Quenched} and the fact that the weak* topology, when restricted to probability measures, is metrizable by $\rho$.
\vspace{10pt}

\textit{Proof.}
Taking $f(x) = e^{itx} = \cos tx + i \sin tx$, where $i = \sqrt{-1}$, we can apply the above inequality separately to the real and imaginary parts to get
\[
\ee\bigl(\ee^N(\sigma_N^2it e^{itX_N} - (X_N -\mu_N)e^{itX_N})\bigr)^2 \le 2(k+1)L\max\{1, |t|^k\} N^{-\alpha}.
\]
Define the random function
\[
\phi_N(t) := \ee^N(e^{it(X_N - \mu_N)}).
\]
Since $\mu_N$ and $\sigma_N^2$ are measurable with respect to $\mf_N$ and $|ie^{it\mu_N}|=1$, we have
\begin{align*}
&\bigl|\ee^N(\sigma_N^2it e^{itX_N} - (X_N -\mu_N)e^{itX_N})\bigr|\\
&= \bigl|\ee^N(\sigma_N^2t e^{it(X_N-\mu_N)} + i(X_N -\mu_N)e^{it(X_N-\mu_N)})\bigr|\\
&= \bigl|\sigma_N^2 t\phi_N(t) + \phi_N'(t)\bigr|.
\end{align*}
Let $\psi_N(t) := e^{-\sigma_N^2t^2/2}$. Then $\phi_N(0)=\psi_N(0)=1$, and $\psi_N'(t) = - \sigma_N^2 t \psi_N(t)$. Thus, for all $t\ge 0$, we have
\begin{align*}
|\phi_N(t)-\psi_N(t)| &\le \int_0^t|\phi_N'(s)-\psi_N'(s)| ds\\
&\le \int_0^t \sigma_N^2 s |\phi_N(s) -\psi_N(s)| ds + \int_0^t |\sigma_N^2 s \phi_N(s) + \phi_N'(s)| ds.
\end{align*}
Now fix $t\ge 0$, and let
\[
A := \sigma_N^2 t, \ \ B := \int_0^t |\sigma_N^2 s \phi_N(s) + \phi_N'(s)| ds.
\]
Also, let $v(s) := |\phi_N(s) - \psi_N(s)|$. Then we see from the last inequality that for all $s\in [0, t]$,
\[
v(s) \le A\int_0^s v(u) du + B.
\]
By the standard method of using the bound recursively, we get that for all $s\in[0,t]$,
\[
v(s) \le Be^{As}.
\]
Combining the steps and using the Cauchy-Schwarz inequality, we get
\begin{align*}
\ee|\phi_N(t) - \psi_N(t)|&\le \ee\biggl(e^{\sigma_N^2 t^2} \int_0^t |\sigma_N^2 s \phi_N(s) + \phi_N'(s)| ds\biggr)\\
&\le \int_0^t \bigl(\ee(e^{2\sigma_N^2t^2})\bigr)^{1/2} \bigl(\ee(\sigma_N^2 s \phi_N(s) + \phi_N'(s))^2\bigr)^{1/2} ds\\
&\le C(t) N^{-\alpha/2},
\end{align*}
where $C(t)$ is a constant depending only on $t$. This shows that $\ee|\phi_N(t) - \psi_N(t)|\ra 0$ as  $N\ra \infty$ for every $t\ge 0$. The same result holds for $t\le 0$ as well. Since $\phi_N(t)-\psi_N(t)$ is the characteristic function of the signed measure $\nu_N - \gamma_N$, the proof can now be completed using standard `subsequence-of-subsequence' arguments.
\qed
\vspace{10pt}

\textit{Proof of Theorem \ref{T:Sum} \eqref{I:3}.}
To prove this statement, we compute $\phi_N(t):=\mathbb E \left[e^{it \langle \HH_N \rangle} \right]$.
By Theorem \ref{T:Sum} \eqref{I:1} and standard results on characteristic functions
\begin{equation*}
\lim_{N \rightarrow \infty} \nu\left(e^{it \HH_N} \right) = e^{-\sigma_A^2 t^2/2}
\end{equation*}
On the other hand, the proof of Lemma \ref{L:Sour} gives
\begin{equation*}
\lim_{N \rightarrow \infty} \mathbb E \left[ \left| \langle e^{it \left( \HH_N - \langle \HH_N \rangle \right)} - e^{-\sigma_Q^2 t^2/2} \rangle \right | \right] = 0
\end{equation*}
Since
\begin{equation*}
 \nu\left(e^{it \HH_N} \right) =  \mathbb E \left[ \langle e^{it (\HH_N- \langle \HH_N \rangle)}\rangle e^{it \langle \HH_N \rangle} \right]
\end{equation*}
we obtain
\begin{equation*}
\lim_{N \rightarrow \infty} \phi_N(t) = e^{-(\sigma_A^2- \sigma_Q^2) t^2/2}.
\end{equation*}
\qed


\section{Proofs of Lemmata}
\label{S:Proofs}
\textit{Proof of Lemma \ref{L:cavitation}}
Note that under the hypothesis of the lemma,
\begin{equation*}
\nu_0\left(\left(\prod_{l=1}^p \varepsilon^{\eta\left(l\right)}- q_p\right)f^{-}\right)=0.
\end{equation*}
Using \eqref{Cav-2},
substituting from \eqref{Cav-Method} and using the decoupling property of $\nu_0$,
{\small \begin{multline*}
\nu\left(\left(\prod_{r=1}^p \varepsilon^{\eta\left(r\right)}- q_p\right)f^{-}\right)=\beta^2 \left(q_{p-2}- q_pq_2\right)\sum_{\{r, r'\} \subseteq [p]} \nu_0\left(f^-\left(R^-_{\eta(r), \eta(r')}-q_2\right)\right)\\
+\beta^2 \left(q_p- q_p q_2\right)\sum_{\underset{1 \leq r \leq p}{ r' \in [n] \backslash \eta\left([p]\right)}} \nu_0\left(f^-\left(R^-_{\eta(r), r'}-q_2\right)\right)\\
+ \beta^2  \left(q_{p+2}-q_pq_2\right)\sum_{\{r, r'\} \subset [n] \backslash \eta([p]) = \varnothing} \nu_0\left(f^-\left(R^-_{\eta(r), n+1}-q_2\right)\right)\\
- \beta^2 n \left(q_p-q_pq_2\right)\sum_{1 \leq r \leq p} \nu_0\left(f^-\left(R^-_{\eta(r), n+1}-q_2\right)\right)\\
- \beta^2 n \left(q_{p+2}-q_pq_2\right)\sum_{ r \in [n] \backslash \eta([p])} \nu_0\left(f^-\left(R^-_{r, n+1}-q_2\right)\right)\\
+  \beta^2 \frac{n\left(n+1\right)}{2} \left(q_{p+2}-q_p q_2\right)\nu_0\left(f^-\left(R^-_{n+1, n+2}-q_2\right)\right) + \nu_0\left((f^-)^2\right)^{\frac{1}{2}}\frac{C}{N}.
\end{multline*}
}
Recalling that $R_{k, k'}^-= R_{k, k'}- \frac{\varepsilon^{k} \varepsilon^{k'}}{N}$, we may replace each instance of $R_{k, k'}^-$ by $R_{k, k'}$ at the cost of an error of the form $ \nu_0\left((f^-)^2\right)^{\frac{1}{2}}CN^{-1}$.   Applying \eqref{Cav-1}, using the Cauchy-Schwarz inequality and the high temperature condition \eqref{Eq:High-Temp} to bound the error gives the result.
\qed

\begin{proof}[Proof of Corollary \ref{C:overlap}]
We shall present the calculation only in the case of $\nu\left(\RR_{n+1,n+2} F\right)$.
The identities for the remaining coordinates are straight forward adaptations.
By definition
\begin{equation*}
\nu\left(\RR_{n+1,n+2} F\right)=\frac{1}{\sqrt{N}} \sum_{i=1}^N \nu\left(\left(\sigma_i^{n+1} \sigma_i^{n+2}- q_2 \right) F(\VV^n)\right).
\end{equation*}

Let us restrict attention to a single summand as each of the summands may be treated similarly.
Using the mean value theorem to expand $F$ around $\VV^{n}_i$ up to its second order derivatives, we have
\begin{multline*}
\nu\left(\left(\sigma_i^{n+1} \sigma_i^{n+2}- q_2 \right) F(\VV^n)\right)= \nu\left(\left(\sigma_i^{n+1} \sigma_i^{n+2}- q_2 \right) F(\VV^{n}_i)\right)\\
+\nu\left( \left(\sigma_i^{n+1} \sigma_i^{n+2}- q_2 \right) \nabla F(\VV^n_i)\cdot(\VV^n-\VV_i^n)\right)\\+
\frac{1}{2} \nu\left( \left(\sigma_i^{n+1} \sigma_i^{n+2}- q_2 \right) (\VV^n-\VV_i^n) \cdot \nabla^2 F(\lambda_0\VV^n + (1- \lambda_0)\VV^n_i)(\VV^n-\VV_i^n)\right)
\end{multline*}
where
the constant $\lambda_0$ may be taken to depend measurably on $\VV^n, \VV^n_i$.

Obviously, the interpolation scheme applies equally to any vertex $i$, so Lemma \ref{L:cavitation} implies that
\begin{equation*}
\nu\left(\left(\sigma_i^{n+1} \sigma_i^{n+2}- q_2 \right) F(\VV^{n}_i)\right)= \frac{\beta^2}{\sqrt N} \sum_{1 \leq r< r' \leq n+4}  \tilde{C}_{r,r'} \nu\left( \RR_{r, r'} F (\VV^{n}_i)\right) + \nu\left(F^2 (\VV^{n}_i)\right)^{\frac{1}{2}} O(N^{-1}).
\end{equation*}
where
\begin{equation*}
 \tilde{C}_{r,r'} =
 \begin{cases}
 q_4- q_2^2 & \text{ if $r, r' \leq n$}\\
q_2- q_2^2 & \text{ if $r\leq n, r' \in \{n+1, n+2\}$}\\
 1- q_2^2 & \text{ if $r=n+1, r' = n+2$}\\
 -n(q_4- q_2^2)  & \text{ if $r\leq n, r' = n+3$}\\
 -n (q_2- q_2^2) & \text{ if $r \in \{n+1, n+2\}, r' = n+3$} \\
  \frac{(n+2)(n+3)}{2} (q_4-q_2^2) & \text{ if $r=n+3, r' = n+4$}.
 \end{cases}
\end{equation*}

Because $F$ depends only on the first $n$ replicas, we may use the exchangeability of replicas to rewrite this equation in terms of the overlaps from the first $n+2$ replicas.  We have (with the notation \eqref{Eq:Replica-Matrix})
\begin{multline*}
\nu\left(\left(\sigma_i^{n+1} \sigma_i^{n+2}- q_2 \right) F(\VV^{n}_i)\right)= \\
\frac{\beta^2}{\sqrt N} \sum_{1 \leq r< r' \leq n+2}  \frak{A}^{n+1, n+2}_{r,r'} \nu\left( \RR_{r, r'} F (\VV^{n}_i)\right) + \nu\left(F^2 (\VV^{n}_i)\right)^{\frac{1}{2}} O(N^{-1}).
\end{multline*}
Expressing $\nu\left( \RR_{r, r'} F (\VV^{n}_i)\right) = \nu\left( \RR_{r, r'} F (\VV^{n})\right) + \nu\left( \RR_{r, r'} F (\VV^{n}_i)\right) - \nu\left( \RR_{r, r'} F (\VV^{n})\right) $ and collecting terms finishes the proof.
\end{proof}

By Corollary \ref{C:overlap}, to  prove Lemma \ref{L:overlap} we have two tasks: compute derivatives and bound errors. We begin by isolating a series of preparatory calculations.
Let us denote the local field for the $N$'th site and $r$'th replica by $\ell_{N}^r$ and let
\begin{equation*}
\vec{\psi}=\vec \psi_N^n:= (\varepsilon^1\ell_N^1, \varepsilon^1,\dotsc,  \varepsilon^n\ell_N^n, \varepsilon^n)
\end{equation*}
We first note that $\|\vec{\psi}\|_2^2$ has finite moments of all orders.
\begin{lemma}
Let $k \in \mathbb N$ be fixed.  Then there exists a $C(k)$ depending only on $k$ so that
\begin{equation*}
\nu\left((\ell_N^1)^{2k}\right) \leq C(k)
\end{equation*}
\end{lemma}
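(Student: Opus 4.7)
My plan is to prove this by induction on $k$, using Gaussian integration by parts on precisely the couplings $\{g_{i,N}\}_{i<N}$ that enter $\ell_N^1$ linearly. The base case $k=0$ is trivial since $\nu(1)=1$. For the inductive step, I would start from the identity
\begin{equation*}
\nu\bigl((\ell_N^1)^{2k}\bigr) = \frac{1}{\sqrt N}\sum_{i<N}\nu\bigl(g_{i,N}\,\sigma_i^1\,(\ell_N^1)^{2k-1}\bigr)
\end{equation*}
and apply Gaussian IBP to each term. Two families of contributions arise. The first comes from the explicit dependence of $(\ell_N^1)^{2k-1}$ on the coupling: using $\partial_{g_{i,N}}\ell_N^1 = N^{-1/2}\sigma_i^1$ and $(\sigma_i^1)^2=1$, summation on $i$ produces the main term $\tfrac{N-1}{N}(2k-1)\,\nu((\ell_N^1)^{2k-2})$, which is bounded by the inductive hypothesis. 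The second comes from differentiating the $n$-replica Gibbs weight and yields, via the standard replica identity $\langle X\rangle\langle \sigma_i\sigma_N\rangle = \langle X\sigma_i^{n+1}\sigma_N^{n+1}\rangle$, terms of the form
\begin{equation*}
\frac{\beta}{\sqrt N}\Bigl[\sum_{r=1}^n\nu\bigl(\sigma_i^1(\ell_N^1)^{2k-1}\sigma_i^r\sigma_N^r\bigr) - n\,\nu\bigl(\sigma_i^1(\ell_N^1)^{2k-1}\sigma_i^{n+1}\sigma_N^{n+1}\bigr)\Bigr].
\end{equation*}

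The key structural observation is the identity $\sum_{i<N}\sigma_i^1\sigma_i^r = N R_{1,r} - \varepsilon^1\varepsilon^r$, which turns the sum over sites into the overlap $R_{1,r}$ (plus a harmless boundary term). After this manipulation the identity reads
\begin{equation*}
\nu\bigl((\ell_N^1)^{2k}\bigr) = \tfrac{N-1}{N}(2k-1)\,\nu\bigl((\ell_N^1)^{2k-2}\bigr) + \beta\sum_{r}c_r\,\nu\bigl(R_{1,r}\sigma_N^r(\ell_N^1)^{2k-1}\bigr) + \text{lower-order terms},
\end{equation*}
where the constants $c_r$ depend only on $n,\beta$. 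Since $|R_{1,r}\sigma_N^r|\le 1$, H\"older's inequality with exponents $2k/(2k-1)$ and $2k$ gives
\begin{equation*}
\bigl|\nu\bigl(R_{1,r}\sigma_N^r(\ell_N^1)^{2k-1}\bigr)\bigr|\le \nu\bigl((\ell_N^1)^{2k}\bigr)^{(2k-1)/(2k)}.
\end{equation*}

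Setting $x := \nu((\ell_N^1)^{2k})$ and invoking the inductive bound, the identity above collapses into a scalar inequality of the form $x \leq a + b\,x^{1-1/(2k)}$ with $a, b$ independent of $N$. Young's inequality (or the elementary observation that $x^{1/(2k)}$ must be bounded whenever $x > 2a$) then yields $x \leq C(k)$ uniformly in $N$, completing the induction. The only real obstacle is the bookkeeping of the replica-Gibbs differentiation; the crucial structural point making the argument close is that summation over sites converts per-site spin products into overlaps, which are universally bounded, so the errors end up sublinear in $\nu((\ell_N^1)^{2k})$.
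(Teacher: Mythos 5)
Your proof is correct and follows the same route the paper indicates, namely Gaussian integration by parts on the couplings $g_{i,N}$ together with induction on $k$ (the paper states exactly this and defers the bookkeeping to \cite{Chatterjee}). The key points you supply --- the split into the explicit derivative of $(\ell_N^1)^{2k-1}$ versus the Gibbs-weight derivative, the identity $\sum_{i<N}\sigma_i^1\sigma_i^r = N R_{1,r}-\varepsilon^1\varepsilon^r$ to convert site sums into bounded overlaps, H\"older against the $2k$-th moment, and the closing scalar inequality $x\le a+bx^{1-1/(2k)}$ --- are precisely the details the paper leaves implicit.
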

\begin{proof}
The proof is by Gaussian integration by parts and induction on $k$.  The reader may consult \cite{Chatterjee} for more details.
\end{proof}

Let us introduce
\begin{equation*}
\XX^{n, -} := \XX^n- \frac{1}{\sqrt{N}}\vec{\psi}.
\end{equation*}
\begin{lemma}
\label{L:Rep}
Suppose that the high temperature condition \eqref{Eq:High-Temp} holds.  Let $l \in [2n]$ be fixed.  Then we have
\begin{equation}
\label{Eq:overlap-A-2}
\nu\left((\varepsilon^{k} \varepsilon^{k'}- q_2) \varepsilon^{r} \partial_{x_{l}}(G F)(\XX^{n,-})\right)= A(k,k', r) \nu(\partial_{x_{j}}(G F)(\XX^n))+ Er(F,G).
\end{equation}
\end{lemma}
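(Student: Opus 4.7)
The plan is to reduce the expectation under $\nu$ to one under $\nu_0$, exploit the decoupling of the last spin under $\nu_0$, and then undo both reductions. The structural point that makes the lemma work is that $\XX^{n,-}= \XX^n - \vec\psi/\sqrt{N}$ depends only on $\{\sigma_i^r : i < N,\ 1 \le r \le n\}$ and on the couplings $\{g_{i,j}\}_{i,j<N}$; in particular it is independent of the last-site spins $\{\varepsilon^r\}_{r=1}^n$. Indeed, writing $E_N(\sigma^r)= E_N^{-}(\sigma^r) + \varepsilon^r\ell_N^r$ and $M_N(\sigma^r)=M_N^{-}(\sigma^r) + \varepsilon^r/N$, one checks directly from the definitions that subtracting $(\varepsilon^r\ell_N^r,\varepsilon^r)/\sqrt{N}$ from the pair $(\EE_N^r,\MM_N^r)$ removes all dependence on $\varepsilon^r$.

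The first step is to apply the cavity bound \eqref{Cav-1} to $f=(\varepsilon^k\varepsilon^{k'}-q_2)\varepsilon^r\,\partial_{x_l}(GF)(\XX^{n,-})$, giving
\begin{equation*}
\nu(f)=\nu_0(f) + O(N^{-1/2})\,\nu(f^2)^{1/2}.
\end{equation*}
Because $|\varepsilon^k\varepsilon^{k'}-q_2|\le 2$ and $|\partial_{x_l}(GF)|\le C_G(1+\|\XX^{n,-}\|_2)^d(\|F\|_\infty+\|\nabla F\|_\infty)$, together with $\|\XX^{n,-}\|_2 \le \|\XX^n\|_2+\|\vec\psi\|_2/\sqrt{N}$ and the uniform moment bound on $\|\vec\psi\|_2$ from the preceding lemma, the factor $\nu(f^2)^{1/2}$ fits the form of $\sqrt{N}\cdot Er(F,G)$, so this replacement contributes an acceptable error.

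The second step invokes the decoupling at $t=0$: under $\nu_0$, the last-site spins $\{\varepsilon^r\}$ are, jointly with the disorder, independent of $\XX^{n,-}$. Hence
\begin{equation*}
\nu_0\bigl((\varepsilon^k\varepsilon^{k'}-q_2)\varepsilon^r\,\partial_{x_l}(GF)(\XX^{n,-})\bigr)
= \nu_0\bigl((\varepsilon^k\varepsilon^{k'}-q_2)\varepsilon^r\bigr)\,\nu_0\bigl(\partial_{x_l}(GF)(\XX^{n,-})\bigr)
= A(k,k',r)\,\nu_0\bigl(\partial_{x_l}(GF)(\XX^{n,-})\bigr).
\end{equation*}
The third step undoes the two changes: apply \eqref{Cav-1} once more to return $\nu_0$ to $\nu$, then Taylor-expand to first order,
\begin{equation*}
\partial_{x_l}(GF)(\XX^n) - \partial_{x_l}(GF)(\XX^{n,-}) = \int_0^1 \nabla\partial_{x_l}(GF)\bigl(\XX^{n,-}+s\vec\psi/\sqrt{N}\bigr)\cdot\frac{\vec\psi}{\sqrt{N}}\,ds,
\end{equation*}
whose $\nu$-expectation is controlled by $C\,\mathrm{Lip}(\partial_{x_l}(GF))\cdot\nu(\|\vec\psi\|_2^2)^{1/2}/\sqrt{N}$.

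The one genuinely delicate aspect is the bookkeeping of the $Er(F,G)$ estimate: $\mathrm{Lip}(\partial_{x_l}(GF))$ must be expressed in terms of $\|F\|_\infty$, $\|\nabla F\|_\infty$, $\mathrm{Lip}(\nabla F)$, and the polynomial growth of $G$, and one must argue that the factor $(1+\|\XX^{n,-}\|_2)^{2d}$ can be replaced, at acceptable cost, by the $\nu$-moment $\nu((1+\|\XX^n\|_2)^{2d})^{1/2}$ appearing in the definition of $Er(F,G)$. This is the main obstacle, but it is a routine (if tedious) consequence of Minkowski's inequality and the moment bound on $\|\vec\psi\|_2$; no new analytic ingredient beyond Lemma \ref{L:Cav-Bound} is required.
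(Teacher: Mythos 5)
Your proposal is correct and takes the same route the paper indicates in its one‑line proof, namely a sequence of applications of \eqref{Cav-1} and Taylor's theorem; you have simply made explicit the intermediate factorization under $\nu_0$ (valid because $\XX^{n,-}$ is a function of the first $N-1$ sites and couplings only, hence decoupled from the $\varepsilon^{r}$ at $t=0$) and the Minkowski/Cauchy--Schwarz bookkeeping that places the remainder inside $Er(F,G)$.
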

\begin{proof}
This follows from a straightforward sequence of applications of \eqref{Cav-1} and Taylor's Theorem.
\end{proof}

\begin{lemma}
\label{L:Field-Loc}
Suppose that the high temperature condition \eqref{Eq:High-Temp} holds.  Let $l \in [2n]$ be fixed and suppose that $r \leq n$.
Then we have
\begin{equation}
\label{Eq:overlap-b}
\nu\left((\varepsilon^{k}\varepsilon^{ k'}- q_2) \varepsilon^{r}\ell_N^r \partial_{x_{l}} (G F)(\XX^{n,-}))= B(k,k', r) \nu(\partial_{x_j}(GF)(\XX^n)\right)+ Er(F,G).
\end{equation}
\end{lemma}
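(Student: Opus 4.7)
The plan is to reduce the identity to a Gaussian integration by parts in the couplings $\{g_{i,N}\}_{i<N}$, followed by the same cavity--plus--Taylor--plus--decoupling manipulations used in the proofs of Lemmas \ref{L:Point-One} and \ref{L:Point-Two}. The key observation is that $\ell_N^r$ is linear in these Gaussians, while $(\varepsilon^k\varepsilon^{k'}-q_2)$, $\varepsilon^r$, and $\XX^{n,-}$ are all measurable with respect to the remaining randomness; the couplings $g_{i,N}$ enter only through the Gibbs weight $e^{\beta H_N^n(\sigma)}$.

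First I would expand $\ell_N^r = N^{-1/2}\sum_{i<N} g_{i,N}\sigma_i^r$ and apply Gaussian integration by parts in each $g_{i,N}$. Differentiating the quenched Gibbs state produces the standard cavity identity
\[
\partial_{g_{i,N}}\langle X\rangle = \frac{\beta}{\sqrt N}\Bigl(\Bigl\langle X\sum_{s=1}^n \sigma_i^s\varepsilon^s\Bigr\rangle - n\bigl\langle X\sigma_i^{n+1}\varepsilon^{n+1}\bigr\rangle\Bigr).
\]
Summing over $i<N$ collapses $N^{-1}\sum_{i<N}\sigma_i^r\sigma_i^s$ into $R_{r,s}^-$, so the left-hand side becomes
\begin{align*}
T &= \beta\sum_{s=1}^n \nu\bigl((\varepsilon^k\varepsilon^{k'}-q_2)\varepsilon^r\varepsilon^s R_{r,s}^-\partial_{x_l}(GF)(\XX^{n,-})\bigr)\\
&\qquad - \beta n\,\nu\bigl((\varepsilon^k\varepsilon^{k'}-q_2)\varepsilon^r\varepsilon^{n+1}R_{r,n+1}^-\partial_{x_l}(GF)(\XX^{n,-})\bigr).
\end{align*}

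Next I would separate the diagonal $s=r$ term, where $(\varepsilon^r)^2=1$ and $R_{r,r}^-=(N-1)/N$, from the off-diagonal ones. In every off-diagonal overlap write $R_{r,s}^- = q_2 + (R_{r,s}^- - q_2)$; Cauchy-Schwarz combined with the high-temperature bound \eqref{Eq:High-Temp} and the moment bounds on $\ell_N$ absorb the $(R_{r,s}^- - q_2)$ contributions into $Er(F,G)$. Then Lemma \ref{L:Cav-Bound}, applied via \eqref{Cav-1}, transfers $\nu$ to $\nu_0$, under which the spins at site $N$ decouple from $\partial_{x_l}(GF)(\XX^{n,-})$; the surviving expectations factor as $\nu_0((\varepsilon^k\varepsilon^{k'}-q_2)\varepsilon^r\varepsilon^s)\cdot \nu_0(\partial_{x_l}(GF)(\XX^{n,-}))$, and the first factor evaluates in closed form in terms of the moments $\{q_p\}$ according to the multiplicities of the indices $k,k',r,s$. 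A final Taylor expansion replaces $\partial_{x_l}(GF)(\XX^{n,-})$ by $\partial_{x_l}(GF)(\XX^n)$, with the error absorbed into $Er(F,G)$ using the bound on $\nu(\|\vec\psi\|_2^{2k})$.

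The principal difficulty is the combinatorial bookkeeping in the last step: one must enumerate all cases depending on whether $r$ agrees with $k$, with $k'$, or with neither (and similarly whether $s$ equals $r$, belongs to $\{k,k'\}$, or is fresh), assemble the resulting $q_p$-coefficients, and verify that the algebraic combination telescopes exactly to $B(k,k',r)$. This is the same cancellation phenomenon that drives Lemmas \ref{L:Point-One} and \ref{L:Point-Two}, which can be regarded as the $n=2$ specializations of the present lemma to $(k,k',r)=(1,2,1)$ and $(2,3,1)$ respectively. Throughout, keeping track of the errors so that they fit the template
\[
\{1+\|F\|_\infty+\|F'\|_\infty+\mathrm{Lip}(\nabla F)\}\{1+C_G\,\nu((1+\|\XX^n\|_2)^{2d})^{1/2}\}\,\frac{C}{\sqrt N}
\]
is routine but tedious, and introduces no new ideas beyond those already used in Section \ref{S:Annealed}.
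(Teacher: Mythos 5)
Your proposal is correct and follows essentially the same route as the paper: Gaussian integration by parts in the couplings $g_{i,N}$ produces the cavity sum over replicas, the resulting overlaps $R_{r,s}^-$ are replaced by $q_2$ at an error controlled by the high-temperature bound \eqref{Eq:High-Temp}, and then \eqref{Cav-1} plus decoupling under $\nu_0$ and a Taylor step close the identity, with the $q_p$-bookkeeping yielding the coefficient $B(k,k',r)$. The paper's proof carries out precisely this sequence, writing it for the generic case $k,k'\neq r$ and noting the remaining cases are adaptations; your observation that the diagonal $s=r$ term drops out (since $\nu_0(\varepsilon^k\varepsilon^{k'}-q_2)=0$ for $k\neq k'$) is the same cancellation the paper uses implicitly.

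One small caveat, which concerns the paper's display rather than your argument: if you run the casework as you describe for $k=r$ (or $k'=r$) you find the coefficient $\beta q_2 a$, in agreement with the intermediate computation in the proof of Lemma~\ref{L:Point-One}, whereas the displayed definition of $B(k,k',r)$ in the paper reads $\beta a$ in that case. Your claim that the algebra ``telescopes exactly to $B(k,k',r)$'' is therefore correct in spirit but should be read against the corrected value; it does not affect the structure of your proof.
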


\begin{proof}
We consider only the generic case $1 \leq k< k' \leq n, \; k,k' \neq r$.  The remaining cases are easy adaptations of the main argument.
The first step is to apply Gaussian integration by parts.  Noting that $\XX^{n,-}$ does not involve the disorder appearing in the local field under consideration,
\begin{multline*}
\nu\left((\varepsilon^k \varepsilon^{k'}- q_2) \varepsilon^r \ell_N^r \partial_{x_{l}}  \left(GF\right) (\XX^{n, -})\right) = \\
\sum_{1 \leq r' \leq n} \sum_{j=1}^{N-1} \frac{\beta}{N} \nu\left((\varepsilon^k\varepsilon^{k'}- q_2) \varepsilon^r\sigma_j^r \varepsilon^{r'} \sigma_j^{r'} \partial_{x_{l}} (GF) (\XX^{n,-})\right)\\
- n\sum_{j=1}^{N-1} \frac{\beta}{N} \nu\left((\varepsilon^k\varepsilon^{k'}- q_2) \varepsilon^r\sigma_j^{r} \varepsilon^{n+1} \sigma_j^{n+1} \partial_{x_{l}} (GF) (\XX^{n,-})\right).
\end{multline*}
Rewriting in the language of overlaps we have
\begin{multline*}
\nu\left((\varepsilon^k \varepsilon^{k'}- q_2) \varepsilon^r \ell_N^r \partial_{x_{l}}  \left(GF\right) (\XX^{n, -})\right) = \\
\sum_{1 \leq r' \leq n} \beta \nu\left((\varepsilon^k \varepsilon^{k'}- q_2) \varepsilon^r \varepsilon^{r'} R_{r,r'}^- \partial_{x_{l}} (GF) (\XX^{n,-})\right)\\
- n\beta \nu\left((\varepsilon^k \varepsilon^{k'}- q_2) \varepsilon^r \varepsilon^{n+1} R_{r,n+1}^- \partial_{x_{l}} (GF) (\XX^{n,-})\right).
\end{multline*}

Successive applications of \eqref{Cav-1} and Taylor's theorem (cf. Section 3) give
\begin{multline*}
\nu\left((\varepsilon^k \varepsilon^{k'}- q_2) \varepsilon^r \ell_N^r \partial_{x_{l}}  \left(GF\right) (\XX^{n, -})\right) =  \beta q_2 [2q_2+q^2_2 - 3 q_4]\nu\left(\partial_{x_{l}} (GF) (\XX^{n})\right) \\+ Er(F,G).
\end{multline*}
The extra factor of $q_2$ in front comes from the replacement of the variables $R_{k, k'}$ using assumption \eqref{Eq:High-Temp}
\end{proof}

\textit{Proof of Lemma \ref{L:overlap}.}
Applying Corollary \ref{C:overlap} to the function $K(\VV)$ defined implicitly by
\begin{equation*}
K(\VV):= GF(\XX^n),
\end{equation*}
we must compute an expression for the first term on the righthand side.
Note that
\begin{equation*}
\frac{\partial}{\partial \sigma_i^r}K = \frac{1}{\sqrt N} \left(\ell^r_i \frac{\partial}{\partial x_r}GF(\XX^n_i)+ \frac{\partial}{\partial x_{r+n}}GF(\XX^n_i)\right).
\end{equation*}
Thus, we may compute
$\nu\left((\sigma^r_i\sigma^{r'}_i-q_2)\nabla K(\VV^n_i)\cdot (\VV^n-\VV^n_i)\right)$ by directly applying Lemmas \ref{L:Rep} and \ref{L:Field-Loc}.  

To bound the error given by Corollary \ref{C:overlap}, we use Taylor's Theorem and H\"{o}lder's inequality along with \eqref{Eq:High-Temp} to obtain
\begin{equation*}
|Rem(K)| \leq Er(F, G).
\end{equation*}
\qed

\vspace{10pt}

\textit{Proof of Lemma \ref{P:prep}.}
The first equation, \eqref{Eq:internal}, follows from a calculation using integration by parts.  Indeed, we have
\begin{multline*}
\nu\left(\sum_{i < j} \frac{\beta}{N} g_{i,j}\sigma^i \sigma^j G(\XX^n)F(\XX^n)\right) = \frac{1}{N^2}\sum_{i<j} \sum_{1 \leq r \leq n}\nu\left(\sigma_i^1 \sigma_j^1\sigma_i^r \sigma_j^r \partial_{x_{r}} \left(GF\right)\right) \\+ \frac{\beta}{N^{3/2}}\sum_{i<j} \sum_{1 \leq r \leq n}\nu\left(\sigma_i^1 \sigma_j^1\sigma_i^r \sigma_j^rGF\right) \\
- n \frac{\beta}{N^{3/2}}\sum_{i<j} \nu\left(\sigma_i^1 \sigma_j^1\sigma_i^{n+1} \sigma_j^{n+1} GF\right).
\end{multline*}
Using the language of overlaps, this simplifies to
\begin{multline*}
\nu\left(\sum_{i < j} \frac{\beta}{N} g_{i,j}\sigma^i \sigma^j G(\XX^n)F(\XX^n)\right) = \frac{1}{2}\sum_{1 \leq r \leq n}\nu\left(R^2_{1,r} \partial_{x_{2r-1}} \left(GF\right)\right) \\+ \frac{\beta \sqrt{N}}{2}\sum_{1 \leq r \leq n}\nu\left(R_{1,r}^2GF\right) - n\frac{\beta \sqrt N }{2} \nu\left(R_{1, n+1}^2 GF\right) + Er(F, G)
\end{multline*}
where $R_{1,1}=1$.
Let us expand each of the overlaps (excluding $R_{1,1}$) using
\begin{equation*}
R_{1,r}= R_{1,r}-q_2+q_2.
\end{equation*}
By \eqref{Eq:High-Temp} and the Cauchy-Schwarz inequality we have
\begin{multline*}
\nu\left(\sum_{i < j} \frac{\beta}{N} g_{i,j}\sigma^i \sigma^j G(\XX^n)F(\XX^n)\right) =\\
 \frac{1}{2} \nu\left(\partial_{x_{1}} \left(GF\right)\right)+ \frac{\beta \sqrt{N}(1-q_2^2)}{2} \nu\left(GF\right)  + \frac{q_2^2}{2}\sum_{2 \leq r \leq n}\nu\left(\partial_{x_{r}} \left(GF\right)\right) \\
  +  \beta q_2 \sum_{2 \leq r \leq n}\nu\left(\RR_{1,r}GF\right) - n \beta q_2 \nu\left(\RR_{1, n+1} GF\right)+Er(F,G),
\end{multline*}
which proves the first statement.

Let us consider next the term involving the scaled magnetization.
By symmetry,
\begin{equation*}
\nu\left(\MM^1 G(\XX^{n})F (\XX^{n})\right)= \nu\left(\sqrt{N}(\varepsilon^1- q_1) G(\XX^{n})F (\XX^{n})\right).
\end{equation*}
We have
\begin{multline*}
\nu\left(\sqrt{N}\right(\varepsilon^{1} - q_1\left) G(\XX^n) F (\XX^n)\right)= \nu\left(\sqrt{N} \left(\varepsilon^1 -q_1\right) (G F) (\XX^{n,-})\right)\\
+\sum_{1 \leq r \leq n} \nu\left( \left(\varepsilon^1 -q_1\right) \varepsilon^r \ell_N^r \partial_{x_{r}} (GF) (\XX^{n,-})\right)\\+ \sum_{1 \leq r \leq n} \nu\left( \left(\varepsilon^1 -q_1\right)  \varepsilon^r\partial_{x_{n+r}} (GF) (\XX^{n,-})\right)
+ Er(F,G).
\end{multline*}

Applying Lemma \ref{L:Cav-Bound} and noting that the zeroth order term for the decoupled measure vanishes,
\begin{multline*}
\nu\left(\sqrt{N}(\varepsilon^1- q_1) G(\XX^{n, -})F (\XX^{n, -})\right)=\\
\nu_{0}'(\sqrt{N} (\varepsilon^1- q_1) G(\XX^{n, -})F(\XX^{n, -})) +  Er(F,G).
\end{multline*}
Using symmetry, Lemma \ref{L:cavitation} gives
\begin{multline*}
\nu_{0}'\left( \varepsilon^1- q_1) G(\XX^{n, -}) F (\XX^{n, -})\right)= \\
\beta^2q_1(1-q_2)\sum_{2 \leq r \leq n} \nu\left(G(\XX^{n,-}) F (\XX^{n,-})\RR_{1,r}\right) \\+\beta^2(q_3-q_1q_2) \sum_{\{r, r'\} \subseteq [n]\backslash \{1\}} \nu(G(\XX^{n,-}) F (\XX^{n,-})\RR_{r,r'}) \\
-n \beta^2q_1(1-q_2)\nu(G(\XX^{n,-}) F (\XX^{n,-})\RR_{1,n+1})\\
 - n\beta^2(q_3-q_1q_2)\sum_{2 \leq r \leq n} \nu(G(\XX^{n,-}) F (\XX^{n,-})\RR_{r,n+1}) \\+\frac{n(n+1)}{2} \beta^2(q_3-q_1q_2)\nu(G(\XX^{n,-}) F (\XX^{n,-})\RR_{n+1,n+2}) + Er(F, G).
\end{multline*}
An application of Taylor's theorem and the Cauchy-Schwarz inequality implies
\begin{multline*}
\nu_{0}'( \varepsilon^1- q_1) G(\XX^{n,-} )F (\XX^{n,-}))= \beta^2q_1(1-q_2)\sum_{2 \leq r \leq n} \nu(G(\XX^{n}) F (\XX^{n})\RR_{1,r}) \\+\beta^2(q_3-q_1q_2) \sum_{\{r, r'\} \subseteq [n]\backslash \{1\}} \nu(G(\XX^{n}) F (\XX^{n})\RR_{r,r'}) \\
-n \beta^2q_1(1-q_2)\nu(G(\XX^{n}) F (\XX^{n})\RR_{1,n+1})\\
 - n\beta^2(q_3-q_1q_2)\sum_{2 \leq r \leq n} \nu(G(\XX^{n}) F (\XX^{n})\RR_{r,n+1}) \\+\frac{n(n+1)}{2} \beta^2(q_3-q_1q_2)\nu(G(\XX^{n}) F (\XX^{n})\RR_{n+1,n+2})\\
 + Er(F,G).
\end{multline*}

Next let us consider the terms given by the derivatives of $GF$.
Arguments analogous to Lemmas \ref{L:Rep} and \ref{L:Field-Loc} show that we have (using Lemma \ref{L:Cav-Bound} and Taylor's Theorem)
\begin{equation*}
\nu\left((\varepsilon^1-  q_1) \varepsilon^1 \partial_{x_{n+1}}(GF) (\XX^{n,-})\right)=(1 - q_1^2) \nu\left(\partial_{x_{n+1}} (GF) (\XX^{n})\right) +  Er(F,G)
\end{equation*}
and
\begin{equation*}
\nu\left((\varepsilon^1-  q_1) \varepsilon^r \partial_{x_{n+r}} (GF) (\XX^{n,-})\right)= (q_2 - q_1^2) \nu\left(\partial_{x_{n+r}}(G F) (\XX^{n})\right)+  Er(F,G)
\end{equation*}
for $r \geq 2$.

The terms involving the local fields $\ell^1_N, \dotsc,  \ell^n_N$ may be computed using integration by parts.  For each $k \in \{1, \dotsc, n\}$, we have (noting that $\XX^{n,-}$ does not involve the disorder appearing in the local fields under consideration)
\begin{multline*}
\nu\left((\varepsilon^1- q_1) \varepsilon^k \ell_N^k \partial_{x_{k}}  \left(GF\right) (\XX^{n, -})\right) = \\
\sum_{1 \leq r \leq n} \sum_{j=1}^{N-1} \frac{\beta}{N} \nu\left((\varepsilon^1- q_1) \varepsilon^k\sigma_j^k \varepsilon^r \sigma_j^r \partial_{x_{k}} (GF) (\XX^{n,-})\right)\\
- n\sum_{j=1}^{N-1} \frac{\beta}{N} \nu\left((\varepsilon^1- q_1) \varepsilon^k\sigma_j^k \varepsilon^{n+1} \sigma_j^{n+1} \partial_{x_{k}} (GF) (\XX^{n,-})\right).
\end{multline*}
Rewriting in the language of overlaps we have
\begin{multline*}
\nu\left((\varepsilon^1- q_1) \varepsilon^k \ell_N^k \partial_{x_{k}}  \left(GF\right) (\XX^{n, -})\right) = \\
\sum_{1 \leq r \leq n} \beta \nu\left((\varepsilon^1- q_1) \varepsilon^k \varepsilon^r R_{k,r}^- \partial_{x_{k}} (GF) (\XX^{n,-})\right)\\
- n\beta \nu\left((\varepsilon^1- q_1) \varepsilon^k \varepsilon^{n+1} R_{k,n+1}^- \partial_{x_{k}} (GF) (\XX^{n,-})\right).
\end{multline*}

The usual applications of Lemma \ref{L:Cav-Bound} and Taylor's theorem give
\begin{multline*}
\nu\left((\varepsilon^1- q_1) \varepsilon^1 \ell_1^1 \partial_{x_{1}}  \left(GF\right) (\XX^{n, -})\right) =\sum_{\substack{2 \leq  r \leq n}} \beta q_1(1-q_2) \nu\left(R_{1,r} \partial_{x_{1}} (GF) (\XX^{n})\right)\\
- n\beta q_1(1-q_2) \nu\left(R_{1,n+1} \partial_{x_{1}} (GF) (\XX^{n})\right) + Er(F,G)\\
= - \beta q_1q_2 (1-q_2) \nu\left( \partial_{x_{1}} (GF) (\XX^{n})\right) + Er(F,G).
\end{multline*}
and
\begin{multline*}
\nu\left((\varepsilon^1- q_1) \varepsilon^k \ell_1^k \partial_{x_{k}}  \left(GF\right) (\XX^{n, -})\right) =\beta q_1(1-q_2) \nu\left(R_{1,k} \partial_{x_{k}} (GF) (\XX^{n})\right) +\\
\sum_{\substack{2 \leq  r \leq n \\ r \neq k}} \beta(q_3-q_2 q_1) \nu\left(R_{k,r} \partial_{x_{k}} (GF) (\XX^{n})\right)\\
- n\beta(q_3-q_2q_1) \nu\left(R_{k,n+1} \partial_{x_{k}} (GF) (\XX^{n})\right) + Er(F,G)\\
= \beta q_1q_2(1-q_2) \nu\left(\partial_{x_{k}} (GF) (\XX^{n})\right) \\
- 2 \beta q_2(q_3-q_2q_1) \nu\left(\partial_{x_{k}} (GF) (\XX^{n})\right) + Er(F,G)
\end{multline*}
for each $2 \leq k \leq n$.
The second equality in each expression follows by replacing each overlap by $q_2$ at the cost of a term of type $Er(F,G)$ via \eqref{Eq:High-Temp}.
Collecting terms gives the final expression.
\qed

\section{Acknowledgments}
The authors would like to thank M.~Talagrand for suggesting the study of fluctuations for $H_N$ in non-zero external field.  N.~Crawford would like to further thank P.L.~Contucci and C.~Giardina for an invitation to the March 2007 Young European Probabilists workshop where this work was first presented.  In addition, he would like to acknowledge interesting discussions with S.~Starr on this and related topics during this meeting.

\end{document}

\section{The quenched average CLT for the Internal Energy}
\label{Annealed}
Our method for determining the annealed law of $\EE_N$ will be to identify its Stein characterizing equation.  Let us assume for now that $f \in C^{2}\left(\mathbb R\right)$.
Let us begin with an elementary calculation using Gaussian Integration by Parts:
\begin{equation}
\nu\left(\EE_N f \left(\EE_N\right)\right) = \frac{1}{2} \nu\left(f'\left(\EE_N\right)\right) -  \frac{\beta \sqrt{N}}{2}\nu\left(\left[R_{1,2}^2- q_2^2\right]f \left(\EE_N\right)\right) + O(1).
\end{equation}

It is now natural to write $R_{1,2}= R_{1,2}- q_2 + q_2$.  Expanding in the above expression we have
\begin{multline}
- \frac{\beta \sqrt{N}}{2}\nu\left(\left[R_{1,2}^2-q_2^2\right]f \left(\EE_N\right)\right) = \\ \beta \sqrt{N} q \nu\left(\left(R_{1,2}-q\right)f \left(\EE_N\right)\right) - \frac{\beta \sqrt{N}}{2} \nu\left(\left(R_{1,2}-q\right)^2 f \left(\EE_N\right)\right).
\end{multline}
Because of the high temperature assumption \eqref{}, we may bound the final summand by $\frac{L \beta\|f\|_\infty}{2 \sqrt{N}}$.

Hence the goal of our remaining analysis will be to identify a formula (up to order $N^{-1/2}$) for
\begin{equation}
\beta \sqrt{N} q \nu\left(\left(R_{1,2}-q\right)f \left(\EE_N\right)\right).
\end{equation}
For the convenience of the reader, let us recall the following computation, given in \cite{}, and based directly on Proposition 2.4.5 from \cite{}.  We shall provide a proof of this fact in an Appendix for completness.
We use the following notation in its statement:
\begin{align}
&\varrho_{p}=q_{p} - 2q_{p+2} + q_{p+4}\\
&\pi_{p}=
\begin{cases}
q_{p+1}-q_{p+3} \hspace{12pt} \text{ if $p \geq -1$}\\
0 \hspace{24pt} \text{ else}
\end{cases}
\end{align}

It seems convenient at this point to introduce a bit of notation.  Let
\begin{align*}
a=& 1- 4q_2+3q_4\\
b=& 2q_2+ q_2^2 - 3q_4\\
c=& 1-6q_2-q_2^2+6q_4
\end{align*}
Further, let us recall an efficient tool for calculations, the truncated overlap $T_{S,p}$ defined by
\begin{equation}
T_{S,p}= \frac{1}{N}\sum_{j=1}^N \prod_{k \in S} \left(\sigma_j^k - \langle\sigma_j\rangle\right) \langle\sigma_j \rangle^p
\end{equation}

We immediately state our first lemma.
The previous considerations thus mean that
\begin{multline*}
\mathbb E\left[\langle \HH_Nf\left(\HH_N\right)- \langle\left(\HH_N - \langle\HH_N\rangle\right)^2 \rangle f' \left(\HH_N\right) - \langle\HH_N\rangle f(\HH_N) \rangle^2 \right] \\
- \mathbb E\left[\langle \HH_Nf\left(\HH_N\right)- \sigma^2 f' \left(\HH_N\right) - \langle\HH_N\rangle f(\HH_N) \rangle^2 \right]= Er(x,  f) + Er(1, f \cdot f').
\end{multline*}

The most important point to take away from Proposition \ref{P:prep} is that The next three lemmas will allow us to write down a linear system of equations for $\nu\left(\RR_{1,2}F\left(\XX\right)\right)$, $\nu\left(\RR_{1,3}F \left(\XX\right)\right)$ and $\nu\left(\RR_{3, 4}F \left(\XX\right)\right)$ in terms of the coefficients $\{q_{k}\}$ and the components of  $\nu\left(\nabla F \left(\XX\right)\right)$.  The proofs of all three follow the same basic structure that we have set out in Section \ref{Annealed} and in the previous lemma.  As such, we shall be brief in their exposition.

\begin{lemma}
We have the identities
\begin{align*}
\nu\left(T_{\{1,3\}} F(\XX)\right) =&0\\
\nu\left(T_{\{3,4\}} F(\XX)\right) =&0\\
\nu\left(T_{\{1,2\}} F(\XX)\right) =& \frac{\beta q_2 \varrho_0}{1- \beta^2 \varrho_2} \nu\left( \partial_{x_1} F\left(\XX\right)\right)+ \frac{ \beta q_2 \varrho_0}{1 - \beta^2 \varrho_0} \nu\left( \partial_{x_3} F\left(\XX\right)\right)+ \left(\|F\|_{\infty} + \|F'\|_{\infty} \right)O(1).\\
\end{align*}
\end{lemma}

\begin{proof}
Each term which vanishes does so by symmetry.  We leave the verification to the reader.

The remaining computations apply Lemma \ref{L:Cav-Bound}, Lemma \ref{L:cavitation}  to switch between the measure $\nu$ and the decoupled measure $\nu_0$.  Note that $\XX^-$ does not involve replicas of spins associated to the first site, nor does it involve the disorder associated to the first site so that the interpolation employed in the cavity method does not affect this quantity.

Consider first $T_{\{1,2\}}$.  Via the introduction of a third and fourth replica we have
\begin{multline}
\sqrt{N}\nu\left(T_{\{1,2\}}F\left(\XX\right)\right) = \sqrt{N}\nu\left((\varepsilon^1- \varepsilon^3)(\varepsilon^2- \varepsilon^4) F\left(\XX^-\right)\right)\\
+ \nu\left((\varepsilon^1- \varepsilon^3)(\varepsilon^2- \varepsilon^4)\vec{\psi}\cdot \nabla F\left(\XX^-\right)\right) +
 \left\{\|F\|_\infty + \|F'\|_\infty\right\}O(1).
\end{multline}
For the first term, we apply Lemmas \ref{L:cavitation} and \ref{L:Cav-Bound} (noting that the middle term vanishes) to get
\begin{multline}
 \sqrt{N}\nu\left((\varepsilon^1- \varepsilon^3)(\varepsilon^2- \varepsilon^4) F\left(\XX^-\right)\right)=\\
\sqrt{N}\varrho_0 \beta^2\nu_{0}\left((T_{\{1,2\}}^- -T_{\{1,5\}}^- - T_{\{2,6\}}^-+T_{\{5,6\}}^-)  F\left(\XX^-\right)\right)+ \|F\|_{\infty} O(1)\\
=\sqrt{N}\varrho_0 \beta^2\nu\left((T_{\{1,2\}} -T_{\{1,5\}} - T_{\{2,6\}}+T_{\{5,6\}})  F\left(\XX\right)\right) + \left(\|F\|_{\infty} + \|F'\|_{\infty} \right)O(1)\\
=\sqrt{N}\varrho_0 \beta^2\nu\left(T_{\{1,2\}}   F\left(\XX\right)\right) + \left(\|F\|_{\infty} + \|F'\|_{\infty} \right)O(1)
\end{multline}
where we have used the first two identities stated in the lemma in the last equality.

Let us consider the second term.
Applying Lemma \ref{L:Cav-Bound},
\begin{equation}
 \nu\left((\varepsilon^1- \varepsilon^3)(\varepsilon^2- \varepsilon^4)\varepsilon^1 \partial_{x_2} F\left(\XX^-\right)\right)=  \nu\left((\varepsilon^1- \varepsilon^3)(\varepsilon^2- \varepsilon^4)\varepsilon^2 \partial_{x_4} F\left(\XX^-\right)\right)= \|F\|_{\infty} O(1).
\end{equation}
Next, applying integration by parts and then Lemma \ref{L:Cav-Bound}
\begin{align*}
 \nu\left((\varepsilon^1- \varepsilon^3)(\varepsilon^2- \varepsilon^4)\varepsilon^1 \ell_1^1 \partial_{x_1} F\left(\XX^-\right)\right)=  & \beta \varrho_0 \nu\left(R_{1,2} \partial_{x_1} F\left(\XX\right)\right) + \|F'\|_{\infty}O(1)\\
=& \beta q_2 \varrho_0 \nu\left( \partial_{x_1} F\left(\XX\right)\right) +  \left(\|F'\|_{\infty}+\|F'\|_{\infty}\right)O(1).
\end{align*}
A similar calculation (or appealing to symmetry) implies
\begin{equation}
 \nu\left((\varepsilon^1- \varepsilon^3)(\varepsilon^2- \varepsilon^4)\varepsilon^2 \ell_1^2 \partial_{x_3} F\left(\XX\right)\right)= \beta q_2 \varrho_0 \nu\left( \partial_{x_3} F\left(\XX\right)\right) + \|F'\|_{\infty}O(1).
\end{equation}
Combining terms we have shown
\begin{multline}
\sqrt{N}\nu\left(T_{\{1,2\}}F\left(\XX\right)\right) =\sqrt{N}\varrho_0 \beta^2\nu\left(T_{\{1,2\}}   F\left(\XX\right)\right)  \\
+  \beta q_2 \varrho_0 \nu\left( \partial_{x_1} F\left(\XX\right)\right)+  \beta q_2 \varrho_0 \nu\left( \partial_{x_3} F\left(\XX\right)\right)+ \left(\|F\|_{\infty} + \|F'\|_{\infty} \right)O(1).
\end{multline}
Solving for $\sqrt{N}\nu\left(T_{\{1,2\}}F\left(\XX\right)\right) $ gives the last identity.
\end{proof}

\begin{lemma}
We have the identities
\begin{align*}
\nu\left(T_{3,1} F(\XX)\right) =0&\\
\nu\left(T_{1,1} F(\XX)\right) =& \frac{\beta^2 \left(q_2-q_4\right)}{1- \beta^2[1-4q_2+3q_4]}\sqrt{N}\nu\left(T_{\{1,2\}}F\left(\XX\right)\right) \\
&+\frac{\beta q_2(1- 4q_2+3q_4)}{1- \beta^2[1-4q_2+3q_4]} \nu\left(\partial_{x_1} F\left(\XX\right)\right)\\
&+ \frac{q_1-q_3}{1- \beta^2[1-4q_2+3q_4]}\nu\left( \partial_{x_2} F\left(\XX\right)\right) +  \left(\|F\|_{\infty} + \|F'\|_{\infty} + \|F''\|_{\infty}\right)O(1). \\
\end{align*}
\end{lemma}

\begin{proof}
The first identity follows from symmetry.  The second is a variation on the previous lemma which we shall now sketch.  This verb `sketch' means in particular that we shall switch between the measures $\nu$ and $\nu_0$ and between random variables such as $\XX^-$ and $\XX$ without apology.  The reader should consult Sections \ref{Annealed} and \ref{Overlap} for step by step justification.

Via the introduction of a third and fourth replica we have
\begin{multline}
\sqrt{N}\nu\left(T_{1,1}F\left(\XX\right)\right) = \sqrt{N}\nu\left((\varepsilon^1- \varepsilon^3)\varepsilon^4F\left(\XX^-\right)\right)\\
+ \nu\left((\varepsilon^1- \varepsilon^3)\varepsilon^4\vec{\psi}\cdot \nabla F\left(\XX^-\right)\right) +
 \left\{\|F\|_\infty + \|F'\|_\infty\right\}O(1).
\end{multline}
For the first term, we use Lemmas \ref{L:Cav-Bound} and \ref{L:cavitation} to obtain
\begin{multline}
\sqrt{N}\nu\left(\left(\varepsilon^1 - \varepsilon^3\right)\varepsilon^4F\left(\XX^-\right)\right) =\beta^2\left(1-q_2\right) \sqrt{N}\nu\left(\left(R_{1,4}-R_{3,4} \right)F\left(\XX\right)\right) \\+
\beta^2 \left(q_2-q_4\right)\sqrt{N}\nu\left(\left(R_{1,2}-R_{3,2}\right)F\left(\XX\right)\right) \\
-\beta^2 4 \left(q_2-q_4\right)\sqrt{N}\nu\left(\left(R_{1,5}-R_{3,5}\right)F\left(\XX\right)\right) + \left\{\|F\|_\infty + \|F'\|_\infty\right\}O(1).
\end{multline}
To calculate the right hand side, we apply the identity
\begin{equation}
R_{1,\ell}- R_{3, \ell}= T_{\{1,\ell\}}- T_{\{3, \ell\}} + T_{1,1}- T_{3,1}.
\end{equation}
The advantage is that many terms in this expansion vanish, and others (such as the term involving $T_{\{1,2\}}$) have already been computed.
After the dust settles,  we have
\begin{multline}
\sqrt{N}\nu\left(\left(\varepsilon^1 - \varepsilon^3\right)\varepsilon^4F\left(\XX^-\right)\right) =\beta^2\left(1-q_2\right) \sqrt{N}\nu\left(T_{1,1}F\left(\XX\right)\right) \\+
\beta^2 \left(q_2-q_4\right)\sqrt{N}\nu\left(\left(T_{\{1,2\}} + T_{1,1}\right) F\left(\XX\right)\right) \\
-\beta^2 4 \left(q_2-q_4\right)\sqrt{N}\nu\left(T_{1,1}F\left(\XX\right)\right) + \left\{\|F\|_\infty + \|F'\|_\infty\right\}O(1)\\
=
\beta^2\left(1-4q_2+ 3q_4\right) \sqrt{N}\nu\left(T_{1,1}F\left(\XX\right)\right) \\+
\beta^2 \left(q_2-q_4\right)\sqrt{N}\nu\left(T_{\{1,2\}}F\left(\XX\right)\right)+ \left\{\|F\|_\infty + \|F'\|_\infty\right\}O(1).
\end{multline}

Consider now the second term.
Applying Lemma \ref{L:Cav-Bound},
\begin{align*}
 \nu\left((\varepsilon^1- \varepsilon^3) \varepsilon^4\varepsilon^1 \partial_{x_2} F\left(\XX\right)\right)=  & \left(q_1-q_3\right)\nu\left( \partial_{x_2} F\left(\XX\right)\right)\\
\nu\left((\varepsilon^1- \varepsilon^3) \varepsilon^4\varepsilon^2 \partial_{x_4} F\left(\XX\right)\right)= & \|F'\|_{\infty} O(1).
\end{align*}
Next, applying integration by parts and then Lemma \ref{L:Cav-Bound}
\begin{multline}
\nu\left(\left(\varepsilon^1 - \varepsilon^3\right)\varepsilon^4\varepsilon^1 \ell_1\partial_{x_1} F\left(\XX^-\right)\right)=\\
\frac{\beta}{N}\sum_{j=2}^N \nu\left(\left(\varepsilon^1 - \varepsilon^3\right)\varepsilon^4\varepsilon^1\sigma_j^1\left(\sum_{k=1}^4 \varepsilon^k\sigma_j^k \right)\partial_{x_1} F\left(\XX^-\right)\right)\\
- 4\frac{\beta}{N}\sum_{j=2}^N \nu\left(\left(\varepsilon^1 - \varepsilon^3\right)\varepsilon^4 \varepsilon^1\sigma_j^1\varepsilon^5\sigma_j^5 \partial_{x_1} F\left(\XX^-\right)\right)\\
= \beta q_2(1- 4 q_2+3q_4) \nu\left(\partial_{x_1} F\left(\XX\right)\right) +  \left\{\|F\|_{\infty} + \|F'\|_{\infty} + \|F''\|_{\infty}\right\}O(1).
\end{multline}
A similar calculation implies
\begin{equation}
 \nu\left((\varepsilon^1- \varepsilon^3) \varepsilon^4\varepsilon^2 \ell_1^2 \partial_{x_3} F\left(\XX\right)\right)= \left\{\|F\|_{\infty} + \|F'\|_{\infty} + \|F''\|_{\infty}\right\}O(1)
\end{equation}

Now we collect terms.  The above calculations give
\begin{multline}
\sqrt{N}\nu\left(T_{1,1}F\left(\XX\right)\right) =  \beta^2\left(1-4q_2+ 3q_4\right) \sqrt{N}\nu\left(T_{1,1}F\left(\XX\right)\right) \\+
\beta^2 \left(q_2-q_4\right)\sqrt{N}\nu\left(T_{\{1,2\}}F\left(\XX\right)\right)\\
+ \left(q_1-q_3\right)\nu\left( \partial_{x_2} F\left(\XX\right)\right)\\
+ \beta q_2(1- 4q_2+3q_4) \nu\left(\partial_{x_1} F\left(\XX\right)\right) +  \left\{\|F\|_{\infty} + \|F'\|_{\infty} + \|F''\|_{\infty}\right\}O(1).
\end{multline}
The lemma is proved by solving for $\sqrt{N}\nu\left(T_{1,1}F\left(\XX\right)\right)$.
\end{proof}
\begin{lemma}
We have the identitiy
\begin{equation}
\nu\left(T_{\varnothing,2} F(\XX)\right) =  \\
\end{equation}
\end{lemma}

\begin{proof}
As usual we may apply Lemma \ref{L:Cav-Bound} and symmetry to obtain
\begin{multline}
\nu\left(\left(\varepsilon^1 - \varepsilon^2\right) \varepsilon^3 \varepsilon^1 \ell_1 f'\left(\HH_N^-\right)\right)=\\
\beta \left[1- 4q_2 + 3q_4\right] \nu\left(R^{1,2} f'\left(\HH_N\right)\right) +\left\{\|f\|_\infty + \|f'\|_\infty\right\}O(1)\\
=\beta q_2 \left[1- 4q_2 + 3q_4\right] \nu\left(f'\left(\HH_N\right)\right)+ \left\{\|f\|_\infty + \|f'\|_\infty\right\}O(1).
\end{multline}
The last line here follows from the assumption that we are in the high temperature region.

Now the previous to calculations allow us to solve for $\nu\left(T_{1,1}f\left(\HH_N\right)\right)$ in terms of $\nu\left(f'\left(\HH_N\right)\right)$.
Combining the various terms, we have shown that
\begin{multline}
\nu\left(\sqrt{N}\left[R_{1,2}-q_2\right]f (\HH_N)\right) = \\
\nu\left(\sqrt{N}\left[R_{2,3}-q_2\right]f (\HH_N)\right) + \frac{ \beta q_2 \left[1 - 4 q_2 + 3q_4\right]}{1- \beta^2 \left[1-4q_2+ 3 q_4\right]}\nu\left(f' (\HH_N)\right)  + \left\{\|f\|_\infty + \|f'\|_\infty\right\}O(1).
\end{multline}
\end{proof}

Next we attend to the term involving $R_{2,3}$:
\begin{lemma}
We have the identity
\begin{multline}
\nu\left(\sqrt{N}\left[R_{2,3}-q_2\right]f (\HH_N)\right) = \beta^2 b \sqrt{N} \nu \left( \left[ R_{1,2} - q_2 \right]f\left(\HH_N\right)\right)\\
 +  \beta^2 c \sqrt{N} \nu \left( \left[ R_{2,3} - q_2 \right]f\left(\HH_N\right)\right)  +
 \beta q_2 b \nu\left(f'\left(\HH_N\right)\right) \\+ \{\|f\|_\infty + \|f'\|_\infty + \|f''\|_{\infty}\} O(1).
\end{multline}
\end{lemma}

\begin{proof}
A number of the approximation techniques from the previous lemma will be employed.
Via the symmetry of $\nu$,
\begin{equation}
 \nu\left(\left[R_{2,3}-q\right]f\left(\HH_N\right)\right)=  \nu\left(\left[\varepsilon^2\varepsilon^3 - q_2\right]f\left(\HH_N\right)\right).
\end{equation}
As above we use the Taylor expansion to write
\begin{multline}
\label{Eq:Start-2}
 \sqrt{N}\nu\left(\left[\varepsilon^2 \varepsilon^3- q_2\right]f\left(\HH_N\right)\right)=  \\
 \sqrt{N}\nu\left(\left[\varepsilon^2 \varepsilon^3-q_2\right]f\left(\HH_N^-\right)\right) +  \nu\left(\left[\varepsilon^2 \varepsilon^3 -q_2\right]\varepsilon^1 \ell_1 f'\left(\HH_N^-\right)\right) + \left\{\|f''\|_\infty \right\}O(1).
\end{multline}

Consider the first term.  As we did to obtain \eqref{Eq:ID-T-1} we appeal to Lemma \ref{L:cavitation} in the final case that $|S|=0, p=2$, Lemma \ref{L:Cav-Bound} and collect terms using the restoration of symmetry to write
\begin{multline}
 \sqrt{N}\nu\left(\left[\varepsilon^2 \varepsilon^3-q_2\right]f\left(\HH_N^-\right)\right) =\\ \beta^2\left[2q_2+ q_2^2 - 3q_4\right] \sqrt{N} \nu \left( \left[ R_{1,2} - q_2 \right]f\left(\HH_N\right)\right)\\
 +  \beta^2\left[1- q_2^2 + 6 \left(q_4-q_2\right)\right] \sqrt{N} \nu \left( \left[ R_{2,3} - q_2 \right]f\left(\HH_N\right)\right)  + \{\|f\|_\infty + \|f'\|_\infty\} O(1).
\end{multline}

For the remaining term from \eqref{Eq:Start-2} we have (using Gaussian Integration by Parts)
\begin{multline}
\nu\left(\left[\varepsilon^2 \varepsilon^3- q_2\right]\varepsilon^1 \ell_1 f'\left(\HH_N^-\right)\right)=\\
\frac{\beta}{N}\sum_{j=2}^N \nu\left(\left[\varepsilon^2 \varepsilon^3- q_2\right]\varepsilon^1\sigma_j^1\left(\sum_{k=1}^3 \varepsilon^k\sigma_j^k \right) f'\left(\HH_N^-\right)\right) + \\
- 3\frac{\beta}{N}\sum_{j=2}^N \nu\left(\left[\varepsilon^2 \varepsilon^3- q_2\right]\varepsilon^1\sigma_j^1\varepsilon^4\sigma_j^4 f'\left(\HH_N^-\right)\right)
\end{multline}
Applying Lemma \ref{L:Cav-Bound}, the Taylor expansion and resymmetrizing we obtain
\begin{multline}
\nu\left(\left[\varepsilon^2 \varepsilon^3- q_2\right]\varepsilon^1 \ell_1 f'\left(\HH_N^-\right)\right)\\
= \beta\left[2q_2 + q_2^2 - 3q_4\right] \nu\left( R_{1,2} f'\left(\HH_N\right)\right) \{\|f\|_\infty + \|f'\|_\infty + \|f''\|_{\infty}\} O(1).
\\
=\beta q_2\left[2q_2 + q_2^2 - 3q_4\right] \nu\left(f'\left(\HH_N\right)\right) + \{\|f\|_\infty + \|f'\|_\infty + \|f''\|_{\infty}\} O(1).
\end{multline}
Combining terms finishes the proof.
\end{proof}
The above pair of Lemmas give us a nontrivial system of equations allowing the solution of $\nu\left(\sqrt{N}\left[R_{1,2}-q_2\right]f (\HH_N)\right)$ in terms of $\nu\left(f' (\HH_N)\right) $
\begin{corollary}
We have
\begin{multline}
\left(1- \beta^2a\right)^2 \nu\left(\sqrt{N}\left[R_{1,2}-q_2\right]f (\HH_N)\right)=\beta q \left((1-q)^2 - \beta^2 a^2\right) \nu\left(f' (\HH_N)\right) \\
+  \{\|f\|_\infty + \|f'\|_\infty + \|f''\|_{\infty}\} O(1).
\end{multline}
\end{corollary}
\begin{proof}
This is a straightforward manipulation once we note that $b+c=a$ and $a+b= (1-q)^2$.
\end{proof}

Let
\begin{equation*}
Q_f = \langle \HH_Nf\left(\HH_N\right)- \langle\left(\HH_N - \langle\HH_N\rangle\right)^2 \rangle f' \left(\HH_N\right) - \langle\HH_N\rangle f(\HH_N) \rangle.
\end{equation*}
 Insertion of a cross term and Cauchy-Schwarz imply that the left hand side may be estimated as
\begin{equation*}
\nu\left(Q_f^2\right)= \nu\left(\left(\langle\left(\HH_N - \langle\HH_N\rangle\right)^2\rangle- \sigma^2\right) f' \left(\HH_N\right)Q_f\right) + Er(x,  f) + Er(1, f \cdot f').
\end{equation*}
Applying Cauchy-Schwarz and \eqref{eq:Ham} once more, we have
\begin{equation*}
\nu\left(Q_f^2\right) \leq \frac{C\|f\|_{\FF}}{N^{1/4}}\nu\left(Q_f^2\right) ^{\frac 12} + Er(x,  f) + Er(1,f \cdot f').
\end{equation*}
The theorem follows by iteration of the bound.